\def\abs#1{\left|#1\right|}
\def\argmin{{\arg\min}}
\def\bA{\mathbf{A}}
\def\bP{\mathbf{P}}
\def\bR{\mathbf{R}}
\def\bU{\mathbf{U}}
\def\bV{\mathbf{V}}
\def\bX{\mathbf{X}}
\def\bZ{\mathbf{Z}}
\def\bbB{\mathbb{B}}
\def\bbE{\mathbb{E}}
\def\bbI{\mathbb{I}}
\def\bbP{\mathbb{P}}
\def\bbR{\mathbb{R}}
\def\bDelta{{\boldsymbol\Delta}}
\def\bLambda{{\boldsymbol\Lambda}}
\def\bepsilon{{\boldsymbol\epsilon}}
\def\bvarepsilon{{\boldsymbol\varepsilon}}
\def\bPi{{\boldsymbol\Pi}}
\def\bSigma{{\boldsymbol\Sigma}}
\def\bsr{{\boldsymbol{r}}}
\def\ba{\mathbf{a}}
\def\bb{\mathbf{b}}
\def\be{\mathbf{e}}
\def\bt{\mathbf{t}}
\def\bu{\mathbf{u}}
\def\bv{\mathbf{v}}
\def\bw{\mathbf{w}}
\def\bx{\mathbf{x}}
\def\by{\mathbf{y}}
\def\balpha{{\boldsymbol\alpha}}
\def\bbeta{{\boldsymbol\beta}}
\def\bgamma{{\boldsymbol\gamma}}
\def\bdelta{{\boldsymbol\delta}}
\def\bfeta{{\boldsymbol\eta}}
\def\bmu{{\boldsymbol\mu}}
\def\bxi{{\boldsymbol\xi}}
\def\brho{{\boldsymbol\rho}}
\def\sfD{{\sf D}}
\def\sfN{{\sf N}}
\def\Ber{{\sf Ber}}
\def\sfd{{\sf d}}
\def\cA{\mathcal{A}}
\def\cD{\mathcal{D}}
\def\cE{\mathcal{E}}
\def\cG{\mathcal{G}}
\def\cI{\mathcal{I}}
\def\cJ{\mathcal{J}}
\def\cK{\mathcal{K}}
\def\cL{\mathcal{L}}
\def\cM{\mathcal{M}}
\def\cN{\mathcal{N}}
\def\cP{\mathcal{P}}
\def\cS{\mathcal{S}}
\def\cT{\mathcal{T}}
\def\cU{\mathcal{U}}
\def\ccA{\mathscr{A}}
\def\ccC{\mathscr{C}}
\def\ccE{\mathscr{E}}
\def\ccEstar{{\mathscr{E}^{*}}}
\def\KL{{\sf KL}}
\def\norm#1{\left\|#1\right\|}
\def\pr{{\bbP}}
\def\d{{\rm d}}
\def\tr{{\sf tr}}
\def\var{{\sf var}}
\def\op{{\rm op}}
\def\diam{{\sf diam}}
\def\emptyset{\varnothing}
\def\tol{\varepsilon_{n,p}}
\def\Cov{\text{cov}}
\def\Ber{{\sf {Ber}}}
\def\sfN{{\sf N}}
\def\pen{{\rm pen}}
\def\col{\text{col}}
\def\dprime{{\prime\prime}}
\def\tprime{{\dprime\prime}}
\def\kl{{\rm kl}}
\def\param{{\rm par}}
\def\glm{{\rm glm}}
\def\best{{\rm best}}
\def\ctT{{\widetilde{\cT}}}
\def\ctG{{\widetilde{\cG}}}
\def\bone{{\boldsymbol{1}}}
\def\R{{\mathbb{R}}}
\def\ind{{\mathbbm{1}}}
\newcommand{\floor}[1]{\left\lfloor#1\right\rfloor}
\mathchardef\mhyphen="2D
\newtheorem{theorem}{Theorem}
\newtheorem{lemma}{Lemma}
\newtheorem{definition}{Definition}
\newtheorem{corollary}{Corollary}
\newtheorem{assumption}{Assumption}
\newtheorem{remark}{Remark}
\newtheorem{example}{Example}
\newtheorem{condition}{Condition}
\theoremstyle{plain}
\theoremstyle{remark}
\newtheorem{proposition}{Proposition}
\newcommand{\kibitz}[2]{\ifnum\Comments=1\textcolor{#1}{#2}\fi}
\definecolor{darkgreen}{rgb}{0,0.5,0}
\definecolor{purple}{rgb}{1,0,1}
\newcommand{\sapta}[1]{\kibitz{purple}     {[SR: #1]}}
\begin{document}

%\begin{frontmatter}

% "Title of the Paper"
\title{Understanding Best Subset Selection: A Tale of Two C(omplex)ities}
%\runtitle{A Tale of Two C(omplex)ities}

%indicate corresponding author with \corref{}
\author{Saptarshi Roy $^\star$\quad Ambuj Tewari $^\star$\quad Ziwei Zhu $^\dagger$\\
$^\star$ University of Michigan, Ann Arbor, USA \\ $^\dagger$ Radix Trading, Chicago, USA}
%\and
%\author{\fnms{???} \snm{???}\ead[label=e2]{???}}
%\address{\printead{e2}}
% \thankstext{t1}{Some comment}
% \thankstext{t2}{First supporter of the project}
% \thankstext{t3}{Second supporter of the project}
%\runauthor{Roy et al.}
\maketitle
\begin{abstract}
We consider the problem of best subset selection (BSS) under high-dimensional sparse linear regression model. 
Recently, \cite{guo2020best} showed that the model selection performance of BSS depends on a certain \emph{identifiability margin}, a measure that captures the model discriminative power of BSS under a general correlation structure that is robust to the design dependence, unlike its computational surrogates such as LASSO, SCAD, MCP, etc. Expanding on this, we further broaden the theoretical understanding of best subset selection in this paper and 
show that the complexities of the \emph{residualized signals}, the portion of the signals orthogonal to the true active features, and \emph{spurious projections}, describing the projection operators associated with the irrelevant features,  also play fundamental roles in characterizing the margin condition for model consistency of BSS. In particular, we establish both necessary and sufficient margin conditions depending only on the identifiability margin and the two complexity measures. We also partially extend our sufficiency result to the case of high-dimensional sparse generalized linear models (GLMs).
\end{abstract}

% \begin{keyword}[class=MSC]
% \kwd[Primary ]{62J05 ; 65C20 ; 62G32 }
% %\kwd{}
% % \kwd[; secondary ]{65C20 ; 62G32}
% \end{keyword}

% \begin{keyword}
% \kwd{Variable selection}
% \kwd{High-dimensional statistics}
% \kwd{Model consistency}
% \end{keyword}

% % history:
% % \received{\smonth{1} \syear{0000}}

% \tableofcontents

% \end{frontmatter}

\section{Introduction}
\label{sec: Introduction}
\sapta{Introduction}
Variable selection in high-dimensional sparse regression has been one of the central topics in statistical research over the past few decades. 
Consider $n$ observations $\{(\bx_i,y_i)\}_{i=1}^n$ following the linear model:
\begin{equation}
\label{eq: base model}
y_i = \bx_i^\top \bbeta +  \varepsilon_i, \quad i \in \{1, \ldots,n\},
\end{equation}
where $\{\bx_i\}_{i \in [n]}$ are \textit{fixed} $p$-dimensional feature vectors, $\{\varepsilon_i\}_{i \in [n]}$ are i.i.d. \textit{mean-zero} noise, and the signal vector $\bbeta \in \bbR^p$ is unknown but is assumed to have a sparse support. 
In matrix notation, the observations can be represented as
\begin{equation*}
    %\label{eq: base model}
    \by = \bX\bbeta + \bvarepsilon,
\end{equation*}
 where $\by = (y_1, \ldots, y_n)^\top$, $\bX = (\bx_1,\ldots, \bx_n)^\top$, and $\bvarepsilon = (\varepsilon_1, \ldots, \varepsilon_n)^\top$. We consider the standard \emph{high-dimensional sparse} setup where $n< p$, and possibly $n \ll p$, and
the vector $\bbeta$ is sparse in the sense that $\norm{\bbeta}_0 := \sum_{j =1}^p \ind(\beta_j \neq 0) = s$, which is much smaller than $p$. 
% The main goal of variable selection is to identify the active set of the true regression coefficient of the model, namely, $\cS:= \{j : \beta_j \neq 0\}$.
% Denote by $\cS(v)$ the set of non-zero coordinates of a vector $v\in \R^p$. Lastly, we denote by $\pr_\beta (\cdot)$ the probability measure when the ground truth is $\beta$. 
In this paper, we focus on the variable selection problem, i.e., identifying the active set $\cS:= \{j : \beta_j \neq 0\}$. We primarily use the 0-1 loss, i.e.,  $\pr(\widehat
{\cS}\neq \cS)$, to assess the quality of the selected model $\widehat{\cS}$.

One of the well-studied methods for variable selection in high-dimensional sparse regression is to penalize the empirical risk by model complexity, thereby encouraging sparse solutions. Specifically, consider
\[
\widehat{\bbeta}^{\rm pen}:= \argmin_{\bbeta \in \bbR^p} \, \cL(\bbeta) + \pen_\lambda(\bbeta),
\]
were $\cL(\bbeta)$ is a loss function and $\pen_\lambda(\bbeta)$ is the penalization term that controls the model complexity. Classical methods such as AIC \citep{akaike1974new, akaike1998information}, BIC \citep{schwarz1978estimating}, Mallow's $C_p$ \citep{mallows2000some} use model complexity as penalty term, i.e., $\ell_0$-norm of the regression coefficient, to penalize the negative log-likelihood. Although these methods enjoy nice sampling properties \citep{barron1999risk, zhang2012general}, such $\ell_0$ regularized methods are known to suffer from huge computational bottleneck \citep{foster2015variable}. This motivated a whole generation of statisticians to develop alternative penalization methods such as LASSO \citep{tibshirani1996regression}, SCAD \citep{fan2001variable}, MC+ \citep{zhang2010nearly}, and many others that have both strong statistical guarantees and computational expediency. 

However, after recent computational advancements in solving BSS \citep{bertsimas2016best, bertsimas2020sparse, zhu2020polynomial}, there has been growing acknowledgment that BSS enjoys significant statistical superiority over its computational surrogates and has inevitably motivated statisticians to investigate the properties of BSS. For example, through extensive simulations, 
\cite{hastie2020best} shows that
BSS performs better than LASSO in high signal-to-noise ratio regime in terms of the prediction risk. \cite{Jain2014iterative} showed
that a wide family of iterative hard thresholding (IHT) algorithms can approximately solve the BSS
problem, in the sense that they can achieve similar goodness of fit with the best subset with slight
violation of the sparsity constraint. \cite{liu2020between} studied the optimal thresholding operator for such iterative thresholding algorithms, which manages to exploit fewer variables
than IHT to achieve the same goodness fit as BSS. Recently, \cite{she2023slowkill} proposed an algorithmic framework based on quantile-thresholding that iteratively optimizes $\ell_2$-penalized BSS objective function and can achieve model consistency under certain regularity conditions on the design.
% (SNR) is high,
%\cite{wainwright2009information}  established matching rates in necessary and sufficient conditions for model consistency of BSS in terms of $(n,p,s, \sigma)$ and the minimum signal strength 
% $$
% \beta_{\min} := \min\{ \abs{\beta_j} : j \in \cS\}.
% $$
On the theoretical side, \cite{guo2020best} showed that the model selection behavior of BSS does not explicitly depend on the restricted eigenvalue condition for the design \citep{bickel2009simultaneous, van2009conditions}, a condition which appears unavoidable (assuming a standard computational complexity conjecture) for any polynomial-time method 
\citep{zhang2014lower}. Specifically, they show that  BSS is robust to design collinearity. 
Under a particular asymptotic regime and independent design, \cite{roy2022high} further established information-theoretic optimality of BSS in terms of precise constants for the signal strength parameter under weak and heterogeneous signal regimes.

%A necessary condition based on a similar identifiability margin condition is also established in their paper which has similar dependence in $(n,p, s,\sigma)$ as in the sufficient condition.

% Theoretical properties of BSS has been extensively studied in \cite{wainwright2009information} and \cite{guo2020best}. \cite{wainwright2009information}  established matching rates in necessary and sufficient conditions for model consistency of BSS in terms of $(n,p,s, \sigma)$ and the minimum signal strength 
% $$
% \beta_{\min} := \min\{ \abs{\beta_j} : j \in \cS\},
% $$
% when the rows of $X$ are independently generated form $p$-dimensional multivariate mean-zero Gaussian with variance-covariance matrix $\Sigma$. On the other hand, \cite{guo2020best} consider fixed design model. They also show that BSS achieves exact model recovery when a certain identifiability margin condition (\sapta{reference to section}) is met and as a result it also follows that BSS is robust to the eigenvalue conditions of the design matrix. A necessary condition based on a similar identifiability margin condition is also established in their paper which has similar dependence in $(n,p, s,\sigma)$ as in the sufficient condition.

In this paper, we also study the variable selection property of BSS and identify novel quantities that are fundamental to understanding the model consistency of BSS. 
 Specifically, we take the geometric alignment of the feature vectors  $\{\bX_j\}_{j \in [p]}$ into consideration to produce a more refined analysis of BSS, and show that on top of a certain identifiability margin introduced in \cite{guo2020best}, the following two geometric quantities also control the model selection performance of BSS: (a) Geometric complexity of the space of \emph{residualized signals}, and (b) Geometric complexity of \emph{spurious projections}. We show the explicit dependence of these two complexity measures in our main results and demonstrate the interplay between the margin condition and the underlying geometric structure of the features through some illustrative examples. In the process, we also point out the existence of a design that is more favorable to BSS than the orthogonal design, which is commonly believed to be the easiest case for model selection. To the best of our knowledge, this is the first work that identifies the underlying geometric complexity of the feature space as a governing force behind the performance of BSS. 
 % In the next section, we will formally define the relevant complexity measures and provide intuition for their effect on BSS.  

 The rest of the paper is organized as follows. In Section \ref{sec: intro BSS} we discuss the preliminaries of BSS. Section \ref{sec: two complexities} is devoted to the discussion of the key quantities, i.e., identifiability margin and the two complexities. In particular, Section \ref{sec: identifiability margin}-\ref{sec: space of projections} carefully introduce the notion of identifiability margin discussed in \cite{guo2020best} and the two novel complexity measures. In Section \ref{sec: corr and complexities}, we build intuition for understanding the effect of these two complexities with varying correlation. In Section \ref{sec: main results}, we present both sufficient (Section \ref{sec: sufficient condition}) and necessary (Section \ref{sec: necessary condition for BSS}) conditions for model consistency of BSS. We also partially extend our result to GLMs and present a similar sufficiency result for model consistency in Section S2 of the supplementary material.

\paragraph{Notation.} Let $\R$ denote the set of real numbers. Denote by $\R^p$ the $p$-dimensional Euclidean space and by $\R^{p\times q}$ the space of real matrices of order $p\times q$. For a positive integer $K$, denote by $[K]$ the set $\{1, 2, \ldots, K\}$. 

Regarding vectors and matrices, for a vector $v \in \R^p$, we denote by $\norm{v}_2$ the $\ell_2$-norm of $v$. We use $\bbI_{p}\in \R^{p \times p}$ to denote the $p$-dimensional identity matrix, and $\mathbf{1}_p\in \bbR^p$ denotes the $p$-dimensional vector with all entries equal to 1.
For a matrix $\bA \in \R^{p\times p}$, we denote by $\bA_j$ and $\ba_j$ the $j$th column and the transposed $j$th row of $\bA$ respectively. We use $\col(\bA)$ and $\col(\bA)^\perp$ to denote the columnspace of $\bA$ and its orthogonal complement respectively.
%We let $\tr(\bA) = \sum_{j=1}^p \bA_{jj}$ denote the trace of $\bA$.

Let $(M, d)$ be a metric space where $M$ is a set endowed with the metric $d$. For a subset $T\subseteq M$, we denote by $\cN(T, d, \varepsilon)$ the $\varepsilon$-covering number of $T$. Similarly, we denote by $\cM(T, d, \varepsilon)$ the $\varepsilon$-packing number of $T$.

Throughout the paper, let $O(\cdot)$ (respectively $\Omega(\cdot)$) denote the standard big-O (respectively big-Omega) notation, i.e., we say $a_n = O(b_n)$ if there exists a universal constant $C>0$, such that $a_n \leq C b_n$ (respectively $a_n\geq C b_n$) for all $n \in \mathbb{N}$. Sometimes for notational convenience, we write $a_n \lesssim b_n$ in place of $a_n = O(b_n)$ and $a_n \gtrsim b_n$ in place of $a_n = \Omega(b_n)$. We write $a_n \asymp b_n$ if $a_n = O(b_n)$ and $a_n = \Omega(b_n)$. Finally, we write $a_n \sim b_n$ if $\lim_{n \to \infty}a_n/b_n  = 1$, and $a_n = o(b_n)$ if $\lim_{n \to \infty} a_n/b_n = 0$.

% We denote by $\Omega_{\pr}$ the big-Omega in probability: for a set of random variables $Z_n$ and a set of constants $a_n$, $X_n = \Omega_{\pr}(a_n)$ means that for any $\varepsilon_0>0$, there exists $C_{\varepsilon_0} >0$ and $n_{\varepsilon_0}\in \mathbb{N}$, both of which depend on $\varepsilon_0$, such that 
% \[
% \pr(\abs{X_n/a_n} < C_{\varepsilon_0})\leq \varepsilon_0, \quad \forall n\geq n_{\varepsilon_0}.
% \]
% We use $\overset{\rm p}{\to}$ and $\overset{\rm d}{\to}$ to denote convergence in probability and distribution respectively. Also, we say $X \overset{\rm d}{=} Y$ for two random variables $X, Y$ if their distributions are equal. We denote by $\ind(\cdot)$ the indicator function. 
\renewcommand{\arraystretch}{2}
\setlength{\tabcolsep}{5pt}
\begin{table}[h]
\scriptsize
%\small
    \centering
    \caption{Notations for the geometric sets and corresponding complexity measures.}
    \vspace{0.1in}
    \begin{tabular}{ |c|c|c| }
    \hline
    Geometric Sets  & Residualized signals (scaled) & Spurious projections\\
    \hline
    \hline
    % Non-private & \textbf{1}* & \textbf{0.90}* \\
    % \hline
     Notation & $\widehat{\bgamma}_{\cD}$ for all $\cD \in  \ccA_{\widehat{s}}$ (Eq. \eqref{eq: T_Is}) & $\bP_\cD - \bP_{\cD \cap \cS}$ for all $\cD \in  \ccA_{\widehat{s}}$ (Eq. \eqref{eq: ortho proj} and \eqref{eq: G_Is})\\ 
    %\hline
    Collection sets & $\cT_\cI^{(\widehat{s})}$ such that $\cI = \cS \cap \cD$ (Eq. \eqref{eq: T_Is}) & $\cG_\cI^{(\widehat{s})}$ such that $\cI = \cS \cap \cD$ (Eq. \eqref{eq: T_Is})\\
    \hline
    %\multirow{2}{*}
    Scaled complexities& $\ccE_{\cT_\cI^{(\widehat{s})}}$ & 
    $ \ccE_{\cG_\cI^{(\widehat{s})}}$ 
    %\\
    % & Approx. Exp-Mech via MCMC & $\Omega(\sqrt{\max\{1, \frac{s}{\varepsilon}\} \frac{\log p}{n}})$ 
    \\
    %\hline
    %\multirow{2}{*}
    {Diameter} & 
    $\sfD_{\cT_\cI^{(\widehat{s})}} $ & $\sfD_{\cG_\cI^{(\widehat{s})}} $\\
    {Minimal separation}& $\sfd_{\cT_\cI^{(\widehat{s})}} $ & $\sfd_{\cG_\cI^{(\widehat{s})}} $\\
    \hline
    \end{tabular}
    \label{tab: notations}
\end{table}

\paragraph{Brief summary of main contributions.}
Before diving into the mathematical details, we first lay out a brief summary of the main results of the paper. As mentioned before, the main contribution of the paper is to identify the role of two quantities related to the complexities of the residualized signals and the spurious projections in the model recovery performance of BSS. To elaborate more on this, we revisit a specific identifiability margin $\tau_*(s)$ (introduced in \cite{guo2020best} and Equation \eqref{eq: identifiability margin}) that essentially captures the joint effect of the signal strength and the collinearity among the true and spurious features arising from mutual correlation between them. If the minimum signal strength $\beta_{\min}:= \min \{\abs{\beta_j} : j \in \cS\}$ is large and the correlation between true and spurios features are small, then $\tau_*(s)$ is large, which makes it easier for BSS to identify the true support $\cS$. Next, we introduce the  two complexity measures of the set of residualized signals $\cT_\cI^{(s)}$ and the set of spurious projections $\cG_\cI^{(s)}$ (see Table \ref{tab: notations}):
\begin{enumerate}
    \item  \textit{Complexity of residualized signals}: In Section \ref{sec: space of unexplained signals}, we introduce the complexity measure for the class of residualized signals $\bgamma_\cD$ (see Equation \eqref{eq: residualized signals}) originating from the part of the true signal $\bX_\cS \bbeta_\cS$ that can not be \textit{linearly explained} by a model $\cD$ with $\cD \cap \cS = \cI$. To be prices, we consider a scaled log-entropy integral of the space of resulting unit vectors $\widehat{\bgamma}_\cD$ which we denote by $\ccE_{\cT_\cI^{(s)}}$ (see Equation \eqref{eq: upper complexity cT}).

    \item \textit{Complexity of spurious projections}: Section \ref{sec: space of projections} introduces the complexity measure for the the spurious projection operators $\bP_\cD - \bP_\cI$ (see Equation \eqref{eq: ortho proj} and \eqref{eq: G_Is}) which are essentially the orthogonal projection matrices onto the subspaces generated by the part of the column spaces of $\bX_\cD$ that is orthogonal to the column space of $\bX_\cS$ for all $\cD$ with $\cD \cap \cS = \cI$. Similar to the previous case, the proposed complexity measure is a scaled version of the log-entropy (under operator norm) integral of the space of spurious projection (see Equation \eqref{eq: upper complexity cG}) which si denoted by $\ccE_{\cG_\cI^{(s)}}$.
\end{enumerate}
%\noindent
Both of this complexities can be linked to the diameter of the set of residualized signals $\cT_\cI^{(s)}$ and the spurious projections $\cG_\cI^{(s)}$ respectively under proper choices of metrics. The important thing to notice is that the intrinsic complexity of the sets might be much smaller compared to the vanilla complexity measure that is often associated with just the cardinality of the sets depending on the structural properties of the design matrix $\bX$. We elaborate through the following example

\begin{example}
  Let us consider the case when the dimension $p = 3$, i.e, $$\bX = [\be_1, (\be_1 + \delta \be_2)/(\sqrt{1 + \delta^2}), (\be_1 + \delta \be_3)/(\sqrt{1 + \delta^2})]\in \bbR^{n \times 3}$$ where $\be_1, \be_2, \be_3$ are the first three canonical basis of  $\bbR^n$, and $\delta \approx 0$. Also assume that the true support is $\cS = \{1\}$. For any other candidate model $\cD$ of unit size, we have $\cD \cap \cS = \emptyset$. Therefore, we have the set of spurious projections to be $\cG_\emptyset^{(1)}:=\{\bP_{\{2\}}, \bP_{\{3\}}\}$. For small enough $\delta$, we have $\norm{\bP_{\{2\}} - \bP_{\{3\}}}_\op = \delta/\sqrt{2}$, and hence $\log \cN(\cG_\emptyset^{(1)}, \norm{\cdot}_\op, \delta^\prime) = 0 $ for all $\delta^\prime \ge \delta$. Therefore, overall complexity of the spurious projections, which is integral of the log-entropy, is much smaller compared to the log-cardinality of the set which is $\log 2$ in this case. A more detailed study can be found in Section \ref{sec: equi-corr design example}.
\end{example}
Therefore, considerations of these quantities will result into sharper results on the margin conditions required for $\tau_*(s)$ (see Theorem \ref{thm: sufficiency of BSS}) in order to have model consistency of BSS. Informally, one of our our main results Theorem \ref{thm: sufficiency of BSS} shows that 
\[
\frac{\tau_*(s)}{\sigma^2} \gtrsim \max \left\{\max_{\cI \subset \cS}\ccE^2_{\cT_{\cI}^{(s)}}, \max_{\cI \subset \cS}\ccE^2_{\cG_{\cI}^{(s)}}\right\} \frac{\log p}{n}
\]
is sufficient for model consistency of BSS. The above condition reveals an interesting interplay between the two complexity measures that shows that \textit{only the set of dominating complexity measure} characterizes the model recovery performance. Moreover, the above condition allows us to artificially construct an example with a specific correlation structure in the design matrix which is more favorable for BSS compared to the independent Gaussian design (see Section \ref{sec: illustartive examples}) case which is popularly believed to be the most easy setting for model selection. Finally, in Theorem \ref{thm: necessary condition BSS}, we also present a somewhat similar necessary condition that also involves the complexity measures of the residualized signals and spurious projections. In the supplementary material, we also present an extension of Theorem \ref{thm: sufficiency of BSS} under the GLM case, i.e., we also provide similar complexity measures tailored to the GLM models under some regularity assumptions on the design and the link function.

\section{Best subset selection}
\label{sec: intro BSS}
We briefly review the preliminaries of BSS, one of the most classical variable selection approaches. For a given sparsity level $\widehat{s}$, BSS solves for

\[
\hat{\bbeta}_{\rm best} (\widehat{s}) :=  \argmin_{\bbeta \in \bbR^p, \norm{\bbeta}_0 \leq  \widehat{s}} \norm{\by - \bX\bbeta}_2^2.
\]
For model selection purposes, we can choose the best fitting model to be $\widehat{\cS}_\best(\widehat{s}):= \{j :[\hat{\bbeta}_\best(\widehat{s})]_j \neq 0\}$. 
For a subset $\cD \subseteq [p]$, define the matrix $\bX_\cD:= (\bX_j; j \in \cD)$. In addition, we denote by $\bP_\cD$ the orthogonal projection operator onto the column space of $\bX_{\cD}$, i.e., 
\begin{equation}
\label{eq: ortho proj}
\bP_{\cD}:= \bX_{\cD}(\bX_{\cD}^\top \bX_{\cD})^{-1} \bX_{\cD}^\top.
\end{equation}
Next, we define the corresponding residual sum of squares (RSS) for model $\cD$ as 
\[
R_\cD : = \by^\top (\bbI_n - \bP_\cD) \by.
\]
With this notation, the $\widehat{\cS}_\best(\widehat{s})$ can be alternatively written as 
\begin{equation}
\widehat{\cS}_{\rm best}(\widehat{s}) :=  \argmin_{\cD \subseteq [p]: \abs{\cD} \leq \widehat{s}} \, R_\cD.
\label{eq: BSS optimization}
\end{equation}
% Let $\bX_\cD$ be the matrix comprised of only the columns of $\bX$ with indices in $\cD$, and $\bP_\cD$ denotes the orthogonal projection matrix onto the column space of $\bX_\cD$.
Given any candidate model $\cD \subset [p]$, we can rewrite the model \eqref{eq: base model} as 

\[
\by = \bX_{\cS}\bbeta_{\cS} + \bvarepsilon = \bP_\cD \bX_\cS \bbeta_\cS + (\bbI_n - \bP_\cD) \bX_{\cS \setminus \cD} \bbeta_{\cS\setminus \cD} + \bvarepsilon.
\]
The term $(\bbI_n - \bP_\cD) \bX_{\cS\setminus \cD} \bbeta_{\cS \setminus \cD}$ is the residual part of the signal that can not be linearly explained by $\bX_\cD$. We refer to this part as the \textit{residualized signals}. We can thus measure the discrimination between the true model $\cS$ and a different candidate model $\cD$ through
the quantity $n^{-1}\Vert(\bbI_n - \bP_\cD) \bX_{\cS \setminus \cD} \bbeta_{\cS\setminus \cD}\Vert_2^2$.

Let $\widehat{\bSigma}:= n^{-1} \bX^\top \bX$ be the sample covariance matrix and for any two sets $\cD_1, \cD_2 \subset [p]$, $\widehat{\bSigma}_{\cD_1, \cD_2}$ denotes the submatrix of $\bSigma$ with row indices in $\cD_1$ and column indices in $\cD_2$. Next, we define the collection $\ccA_{\widehat{s}}:= \{\cD \subset [p] : \cD \neq \cS, \abs{\cD} = \widehat{s}\}$, and for $\cD \in \ccA_{\widehat{s}}$
write 
\[\Gamma(\cD) = \widehat{\bSigma}_{ \cS \setminus \cD, \cS \setminus \cD} -  \widehat{\bSigma}_{ \cS \setminus \cD, \cD}\widehat{\bSigma}_{\cD, \cD}^{-1} \widehat{\bSigma}_{\cD, \cS \setminus \cD}.\]
In the Gaussian design case, the above quantity can be indemnified as the empirical version of the conditional variance-covariance matrix $\Cov(\bX_{\cS \setminus \cD} \mid \bX_\cD)$.  Therefore, $\Gamma(\cD)$ roughly captures the degree correlation between the features in $\bX_{\cS \setminus \cD}$ and $\bX_{\cD}$.
To understand the above quantity more clearly, note that \[\bbeta_{\cS\setminus \cD}^\top\Gamma(\cD)\bbeta_{\cS \setminus \cD} = n^{-1}\Vert(\bbI_n - \bP_\cD) \bX_{\cS \setminus \cD} \bbeta_{\cS\setminus \cD}\Vert_2^2 = n^{-1}\norm{(\bbI_n - \bP_\cD) \bX_{\cS} \bbeta_{\cS}}_2^2.\]
This shows that $\bbeta_{\cS\setminus \cD}^\top\Gamma(\cD)\bbeta_{\cS \setminus \cD}$ captures the goodness-of-fit for model $\cD$.
Intuitively, if $\bbeta_{\cS\setminus \cD}^\top\Gamma(\cD)\bbeta_{\cS \setminus \cD}$ is very close to zero, then there exists $\bb\in \bbR^{\abs{\cD}}$ such that $\bX_\cS \bbeta_\cS \approx \bX_\cD \bb$. Hence, $\cS$ and $\cD$ have similar linear explanatory power, and the true model $\cS$ becomes practically indistinguishable from $\cD$.  
In fact, the following lemma shows that $\bbeta_{\cS \setminus \cD}^\top \Gamma(\cD) \bbeta_{\cS \setminus \cD}$ needs to be at least bounded away from 0 for all $\cD \in \ccA_{\widehat{s}}$ to make $\cS$ identifiable.

\begin{lemma}
    \label{lemma: identifiability}
    For any given $\widehat{s}>0$, if there exists a $\cD \in \ccA_{\widehat{s}}$ such that $\bbeta_{\cS \setminus \cD}^\top \Gamma(\cD) \bbeta_{\cS \setminus \cD} = 0$, then there exists $\bb\in \bbR^{\widehat{s}}$ such that $\bX_\cS \bbeta_\cS = \bX_\cD \bb$. Hence, both $\bX_\cS \bbeta_\cS$ and $\bX_\cD \bb$ generates the same probability distribution for $\by$, and $\cS$ becomes non-identifiable.
\end{lemma}
% \begin{equation}
% \label{eq: schur complement}
% \begin{aligned}
% \Gamma(\cD) &:=n^{-1}\norm{(\bbI_n - \bP_\cD) \bX_{\cS \setminus \cD} \bbeta_{\cS\setminus \cD}}_2^2\\
% & = n^{-1} \left\{\bbeta_{\cS \setminus \cD}^\top \bX_{\cS \setminus \cD}^\top (\bbI_n - \bP_\cD) \bX_{\cS \setminus \cD} \beta_{\cS \setminus \cD}\right\}\\
% & = \bbeta_{\cS\setminus \cD}^\top(\widehat{\bSigma}_{ \cS \setminus \cD, \cS \setminus \cD} -  \widehat{\bSigma}_{ \cS \setminus \cD, \cD}\widehat{\bSigma}_{\cD, \cD}^{-1} \widehat{\bSigma}_{\cD, \cS \setminus \cD})\bbeta_{\cS \setminus \cD},
% \end{aligned}
% \end{equation}
% where $\widehat{\bSigma}:= n^{-1} \bX^\top \bX$ is the sample covariance matrix, and for any two sets $\cD_1, \cD_2 \subset [p]$, $\widehat{\bSigma}_{\cD_1, \cD_2}$ is the submatrix of $\bSigma$ with row indices in $\cD_1$ and column indices in $\cD_2$. We also define the set $\ccA_{\widehat{s}}:= \{ \cD \subset [p]: \cD \neq \cS, \abs{\cD} = \widehat{s}  \}$, which is the collection of models of size $\widehat{s}$ except the true one. 

% \begin{equation}
% \label{eq: schur complement}
%     \Gamma(\cD) := \widehat{\bSigma}_{ \cS \setminus \cD, \cS \setminus \cD} -  \widehat{\bSigma}_{ \cS \setminus \cD, \cD}\widehat{\bSigma}_{\cD, \cD}^{-1} \widehat{\bSigma}_{\cD, \cS \setminus \cD}.
% \end{equation}
Now we are ready to introduce the identifiability margin that characterizes the  \emph{model discriminative power} of BSS and the two complexity measures.

\section{Identifiability margin and two complexities}
\label{sec: two complexities}
% In this section, we will discuss the margin conditions and develop the notion of geometric complexities of the underlying feature spaces. 
% Before, going into the details we will first introduce some basic notations that will be used in our subsequent discussions. 

\subsection{Identifiability margin}
\label{sec: identifiability margin}
% We first note that the quantity $n^{-1}\bbeta_{\cS\setminus \cD}^\top\Gamma(\cD)\bbeta_{\cS \setminus \cD}$ is the sample variance of the residuals of $\bX_{\cS \setminus \cD}\bbeta_{\cS\setminus \cD}$ after being linearly regressed on $\bX_\cD$, and hence it captures the degree of how well $\bX_{\cS \setminus \cD} \bbeta_{\cS \setminus 
% cD}$ can be linearly explained by $\bX_\cD$. 
The discussion in Section \ref{sec: intro BSS} motivates us to define the following \emph{identifiability margin}:
\begin{equation}
    \label{eq: identifiability margin}
    \tau_*(\widehat{s}) := \min_{\cD \in \ccA_{\widehat{s}}} \frac{ \bbeta_{\cS \setminus \cD}^\top \Gamma(\cD) \bbeta_{\cS \setminus \cD}}{\abs{\cS \setminus \cD}}.
\end{equation}
If we define $\ccA_{\widehat{s},k}: = \{\cD \in \ccA_{\widehat{s}} : \abs{
\cS \setminus \cD} = k\}$, then the above can be rewritten as
\[
 \tau_*(\widehat{s}) = \min_{k \in [\widehat{s}]}\min_{\cD \in \ccA_{\widehat{s},k}} \frac{ \bbeta_{\cS \setminus \cD}^\top \Gamma(\cD) \bbeta_{\cS \setminus \cD}}{k}.
\]
As mentioned earlier, the quantity $\tau_*(\widehat{s})$ captures the model discriminative power of BSS. To add more perspective, note that if the features are highly correlated among themselves then it is expected that $\tau_*(\widehat{s})$ is very close to $0$. Hence, any candidate model $\cD$ is practically indistinguishable from the actual model $\cS$ which in turn makes the problem of exact model recovery harder. On the contrary, if the features are uncorrelated then $\tau_*(\widehat{s})$ becomes bounded away from 0 making the true model $\cS$ easily recoverable.
For example,  \cite{guo2020best} showed that under the condition 
\begin{equation}
    \label{eq: margin condition zz}
    \tau_*(s) \gtrsim \sigma^2 \frac{\log p}{n},
\end{equation}
BSS is able to achieve model consistency. In general, Condition \eqref{eq: margin condition zz} is less restrictive than the well known $\beta$-min condition which demands
\[
a:= \min_{j \in \cS} \abs{\beta_j} \gtrsim \sigma \left(\frac{\log p}{n}\right)^{1/2}.
\]
To see this, let $\hat{\lambda}_m := \min_{\cD \in \ccA_s}\lambda_{\min} \left(\Gamma(\cD)\right)$ where $\lambda_{\min} \left(\Gamma(\cD)\right)$ denotes the minimum eigenvalue of $\Gamma(\cD)$, and note that $\tau_*(s) \ge \hat{\lambda}_m a^2$. Thus a sufficient condition for \eqref{eq: margin condition zz} to hold is $a \gtrsim \sigma \{\log p/(n \hat{\lambda}_m)\}^{1/2}$. In comparison, \cite{zhang2012general} showed that the $\ell_0$-regularized least square estimator is able to achieve model consistency when $a \gtrsim \sigma \{\log p/(n \kappa_-)\}^{1/2}$, where $\kappa_-:= \min_{\cD : \abs{\cD} \le s, \cD \subset [p]} \lambda_{\min}(\widehat{\bSigma}_\cD)$. The latter condition is very sensitive to the feature correlation as $\kappa_-$ can vary drastically depending on the degree of correlation between the features. In contrast, $\hat{\lambda}_m$ is robust against design dependence;  rather, it
reflects how spurious variables can approximate the true model, which implies much less restriction
than that induced by $\kappa_-$. For more details on the identifiability margin, we point the readers to Section 2.1 of \cite{guo2020best}. 
From now on, unless otherwise mentioned,  we will assume that the margin quantity $\tau_*(\widehat{s})>0$ to avoid the non-identifiability issue as pointed out in Lemma \ref{lemma: identifiability}. 
\sapta{More content from Ziwei's paper}

Next, we will shift focus on the underlying geometric structures of two spaces that govern the difficulty of the BSS problem \eqref{eq: BSS optimization}. We essentially identify the complexities of two types of sets that control the hardness of BSS: (i) the set of residualized signals, and (ii) the set of spurious projections. We discuss these two sets, and the associated complexities in detail below and Table \ref{tab: notations} compiles important notations and quantities related to the aforementioned sets. 

\subsection{ Complexity of residualized signals}
\label{sec: space of unexplained signals}
We start with the definition of the residualized signal. For a candidate model $\cD\in \ccA_{\widehat{s}}$, define  
\begin{equation}
\label{eq: residualized signals}
\bgamma_\cD:= n^{-1/2} (\bbI_n - \bP_\cD)\bX_{\cS \setminus \cD} \bbeta_{\cS \setminus \cD},
\end{equation}
and the corresponding unit vector $\widehat{\bgamma}_\cD:= \bgamma_{\cD}/ \norm{\bgamma_\cD}_2$. Note that $\widehat{\bgamma}_\cD$ is well-defined as 
\begin{figure}%{r}{0.45\textwidth}
  \begin{center}    \includegraphics[width=0.5\textwidth]{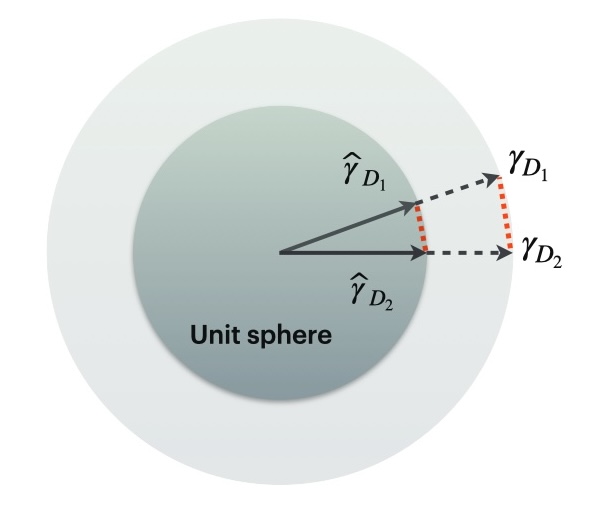}
  \end{center}
  \caption{Distance between $\widehat{\bgamma}_{\cD_1}$ and $\widehat{\bgamma}_{\cD_2}$ correctly captures the angular separation.}
  \label{fig: gamma}
\end{figure}
$\norm{\bgamma_\cD}_2^2\geq \tau_*(\widehat{s})>0$. As mentioned before, $\bgamma_\cD$ represents 
the part of the signal that can not be linearly explained by the features in model $\cD$. Note that the margin condition \eqref{eq: margin condition zz} essentially tells that the vectors $\bgamma_\cD$ are well bounded away from the origin. However, this property does not quite capture the degree of their radial spread in $\bbR^n$. It may happen that despite being well bounded away from the origin,
the vectors are clustered along one common unit direction. For example, in Figure \ref{fig: gamma}, the distance between $\bgamma_{\cD_1}$ and $\bgamma_{\cD_2}$ are large as the vectors are at a larger distance from the origin, although the angular separation between them is small. 
%the vectors could have a radial spread in $\bbR^n$, or they could converge along one common unit direction. 
To capture this notion of separation within the vectors $\{\bgamma_\cD\}_{\cD \in \ccA_{\widehat{s}}}$, we also need to capture the spatial alignment of their corresponding unit vectors $\{\widehat{\bgamma}_\cD\}_{\cD \in \ccA_{\widehat{s}}}$.
This motivates us to consider the geometric complexities of this set of unit vectors. Specifically, for a set $\cI \subset \cS$, we define 

\begin{equation}
\label{eq: T_Is}
\cT_\cI^{(\widehat{s})}:= \{ \widehat{\bgamma}_\cD  : \cD \in \ccA_{\widehat{s}}, \cS \cap \cD = \cI  \} \subseteq \bbR^n,
\end{equation}
which is the set of all the normalized forms of the residualized signals corresponding to the models $\cD\in \ccA_{\widehat{s}}$ with $\cI$ as the common part with true model $\cS$. To capture the complexity of these spaces, we look at the scaled entropy integral
\begin{equation}
    \label{eq: upper complexity cT}
    \ccE_{\cT_\cI^{(\widehat{s})}}:= \frac{\int_0^\infty \sqrt{\log \cN(\cT_\cI^{(\widehat{s})}, \norm{\cdot}_{2}, \varepsilon)} \;\d\varepsilon } {\sqrt{\log \vert \cT_\cI^{(\widehat{s})}\vert}} .
\end{equation}
The numerator in the above display is commonly known as entropy integral which captures the topological complexity of $\cT_\cI^{(\widehat{s})}$. In literature, this quantity has a connection to the well-known \textit{Talagrand's complexity} \citep{talagrand2005generic}, which often comes up in controlling the expectation of the supremum of Gaussian processes \citep{krahmer2014suprema, lifshits1995gaussian, adler2007random}. In this paper, we look at the above scaled version of the entropy integral which allows us to compare the quantity with the diameter and minimum pairwise distance between the elements of the set $\cT_\cI^{(\widehat{s})}$. To elaborate on this point, define the diameter and minimum pairwise distance of $\cT_\cI^{(\widehat{s})}$ as follows:
\[
\sfD_{\cT_\cI^{(\widehat{s})}} := \max_{\bu, \bv \in \cT_\cI^{(\widehat{s})}} \norm{\bu - \bv}_2,\quad \text{and} \quad \sfd_{\cT_\cI^{(\widehat{s})}} := \min_{\bu, \bv \in \cT_\cI^{(\widehat{s})}} \norm{\bu - \bv}_2.
\]
% \[
% \sfd_{\cT_\cI^{(\widehat{s})}} := \min_{\bu, \bv \in \cT_\cI^{(\widehat{s})}} \norm{\bu - \bv}_2.
% \]
Now notice the following two simple facts:
\[
\log \cN(\cT_\cI^{(\widehat{s})}, \norm{\cdot}_2, \sfD_{\cT_{\cI}}) = 0,\quad \log \cN(\cT_\cI^{(\widehat{s})}, \norm{\cdot}_2, \sfd_{\cT_{\cI}}) =  \log \abs{\cT_\cI^{(\widehat{s})}}. 
\]
% Also, note that $\abs{\cT_{\cI}} = \binom{p-\widehat{s}}{\widehat{s}-\abs{\cI}}$. 
% For the sake of simplicity let us assume briefly that $\widehat{s}
% < p/2$. Then for large $p$ it follows that
% \[
%  \log (ep) \lesssim \frac{\log \abs{\cT_\cI^{(\widehat{s})}}}{\widehat{s}-\abs{\cI}} \leq  \log (ep).
% \]
Noting that $\log \cN(\cT_\cI^{(\widehat{s})}, \norm{\cdot}_2, \delta)$ is a decreasing function over $\delta$, we finally get 
\[
\sfd_{\cT_\cI^{(\widehat{s})}} \le \ccE_{\cT_\cI^{(\widehat{s})}}
\leq \sfD_{\cT_\cI^{(\widehat{s})}}.
\]
This shows that the quantity $\ccE_{\cT_\cI^{(\widehat{s})}}$ roughly captures the average separation of the set $\cT_\cI^{(\widehat{s})}$. In our subsequent discussion, we will show that $\ccE_{\cT_\cI^{(\widehat{s})}}$ heavily influences the margin condition for exact recovery. This is indeed an important observation, as the complexity of the set of residualized signals depends heavily on the association between the features. For example, they can differ vastly for highly correlated designs compared to almost uncorrelated designs. Hence, the effect of $\ccE_{\cT_\cI^{(\widehat{s})}}$ on the exact model recovery also varies significantly across different classes of distributions and leads to sharper margin conditions for exact model recovery.
% Intuitively, $\sfD_{\cT_k}$ and $\sfd_{\cT_k}$ measures the maximum and minimum separation of the vectors within $\cT_k$ respectively, which is equivalent to measuring pairwise maximum and minimum angular departure between the vectors $\{\gamma_\cD\}_{\cD \neq \cS}$. Now we will discuss how these quantities control the difficulty of BSS. \sapta{discuss}

%%%%%%%%
\subsection{Complexity of spurious projections \sapta{Keep in Mind}} 
\label{sec: space of projections}
\sapta{point out the no dependence on Y}
In this section, we will introduce the space of projection operators that also control the level of difficulty of the true model recovery. Similar to the previous section, for a fixed set $\cI \subset \cS$, we consider the set
\begin{equation}
\label{eq: G_Is}
\cG_\cI^{(\widehat{s})}:= \{\bP_\cD - \bP_{\cI} : \cD\in \ccA_{\widehat{s}}, \cS \cap \cD = \cI\} \subseteq \bbR^{n \times n}.
\end{equation}
% Next we will focus on another important space as follows:
% \[
% \cG_k := \{ P_\cD  : \cD \in \ccA_{s,k} \} \subseteq \bbR^{n \times n}.
% \]
It is a well-known fact that every projection operator of 
the form $\bP_\cD - \bP_{\cI} \in \cG_\cI^{(\widehat{s})}$ has a one-to-one correspondence with the subspace $\text{col}(\bX_\cD) \cap \text{col}(\bX_{\cI})^\perp$.
Thus, $\cG_\cI^{(\widehat{s})}$ can be thought of as the collection of all linear subspaces of the form $\text{col}(\bX_\cD) \cap \text{col}(\bX_{\cI})^\perp$, which is essentially the set of spurious features that can not be linearly explained by the set of features in model $\cI \subset \cS$.  To capture the proper measure of complexity of the set $\cG_\cI^{(\widehat{s})}$, it is crucial to induce the space of projection operators with a proper metric. It turns out that Grassmannian distance is the correct distance to consider in this context. Specifically,  for two linear subspaces $ U, V$ we look at their \emph{maximum sin-theta distance}:

\[
\d(U, V) := \norm{\bPi_{U} - \bPi_{V}}_\op,
\]
where $\bPi_{U}, \bPi_{V}$ are the orthogonal projection operators of $U, V$ respectively.
It turns out that $\d(U, V)$ evaluates the trigonometric sine function at the maximum principal angle between the subspaces $U$ and $V$ (see Figure \ref{fig: subspace distance}).
We point the readers to \cite{ye2016schubert} for a more detailed discussion on this topic. 
\begin{figure}
  \begin{center}    \includegraphics[width=0.5\textwidth]{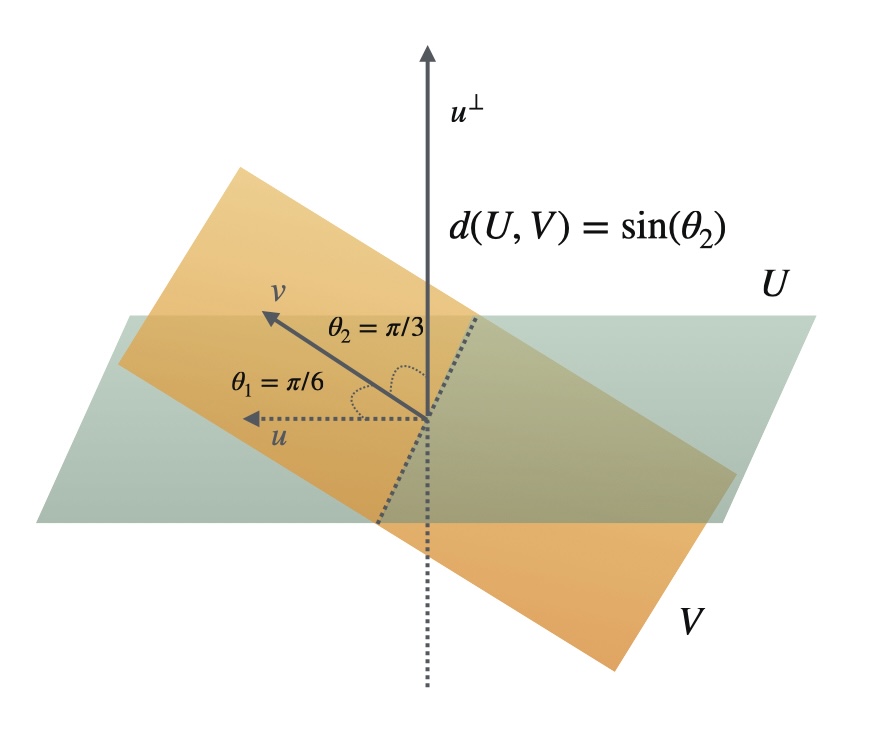}
  \end{center}
  \caption{The figure shows the two principal angles between two subspaces $U$  and $V$. $\{u, u^\perp\}$ are the two orthonormal basis of $U$, and $v$ is an orthonormal basis of $V$. $\theta_2$ is the maximum principal angle between $U$ and $V$.}
  \label{fig: subspace distance}
\end{figure}

Under this distance, we define the scaled entropy integral as
\begin{equation}
    \label{eq: upper complexity cG}
    \ccE_{\cG_\cI^{(\widehat{s})}}:= \frac{\int_0^\infty \sqrt{\log \cN(\cG_\cI^{(\widehat{s})}, \norm{\cdot}_{\op}, \varepsilon)} \;\d\varepsilon}{\sqrt{\log \vert \cG_\cI^{(\widehat{s})}\vert}}.
    \end{equation}
Note that, unlike 
$\ccE_{\cT_\cI^{(\widehat{s})}}$, the complexity measure $\ccE_{\cG_\cI^{(\widehat{s})}}$ has no dependence on $\bbeta$ or the residualized signal. Thus, $\ccE_{\cG_\cI^{(\widehat{s})}}$ roughly captures the geometric complexity of only the spurious features. In fact, via a similar argument as in Section \ref{sec: space of unexplained signals}, it can be shown that 
$
\sfd_{\cG_\cI^{(\widehat{s})}} \le \ccE_{\cG_\cI^{(\widehat{s})}} \leq \sfD_{\cG_\cI^{(\widehat{s})}},
$
where
\[
\sfd_{\cG_\cI^{(\widehat{s})}} := \min_{\bU, \bV \in \cG_\cI^{(\widehat{s})}} \norm{\bU - \bV}_\op, \quad \sfD_{\cG_\cI^{(\widehat{s})}} := \max_{\bU, \bV \in \cG_\cI^{(\widehat{s})}} \norm{\bU - \bV}_\op.
\]
%It is evident that $\sfD_{\cG_\cI^{(\widehat{s})}}$ and $\sfd_{\cG_\cI^{(\widehat{s})}}$ denote the maximum and minimum separation between the subspaces generated by the spurious features. 
Thus, $\ccE_{\cG_\cI^{(s)}}$ only captures the separability in the set of subspaces generated by the spurious features. The main motivation behind considering such quantity is to capture the influence of the effective size of the set $\{\cG_\cI^{(\widehat{s})}\}_{\cI \subset \cS}$ in the analysis of BSS. A naive union bound only uses $\abs{\cG_\cI^{(\widehat{s})}} = \binom{p-\widehat{s}}{\widehat{s} - \abs{\cI}}$ as a measure of complexity of the set $\cG_\cI^{(\widehat{s})}$. This is rather loose, as the effective complexity of the set is much smaller if $\ccE_{\cG_\cI^{(\widehat{s})}}$ is small. Thus, taking $\ccE_{\cG_\cI^{(\widehat{s})}}$ into account unravels a broader picture of the effect incurred by the underlying geometry of the feature space.
% and yields a more refined analysis of BSS.

% On the other hand, $ \sfD^\circ_{\cG_k}$ and $\sfd^\circ_{\cG_k}$ measures the maximum and minimum discrimination between the subspaces generated by the true features and noisy features respectively. Intuitively, if $ \sfD^\circ_{\cG_k}$ is small, then it is hard for BSS to distinguish between the active features and noisy features. Similarly, large $\sfd^\circ_{\cG_k}$ should ease the identification of the true features from the noisy ones.

\subsection{Correlation and complexities}
\label{sec: corr and complexities}
From the discussion on the two complexities, it is quite evident that both of the complexity measures heavily rely on the alignment of the feature vectors $\{\bX_j: j \in [p]\}$, which directly depends on the correlation structure among the features in the model. \sapta{maybe give some examples} Below, we discuss how these two types of complexities may vary with correlation among the features. 

\paragraph{Correlation and spurious projection operators:}
We first focus on the set $\cG_\cI^{(\widehat{s})}$, as it is relatively easy to understand its behavior across different correlation structures. Recall that for a fixed choice of $\cI$, the set $\cG_\cI^{(\widehat{s})}$ is the collection of all the projection operators of the form $\bP_\cD - \bP_{\cI}$ for all $\cD \in \ccA_{\widehat{s}}$, which can be thought of as the collection of different subspaces generated by the linear combination of the spurious features. If the spurious features are highly correlated then these subspaces may be essentially indistinguishable from each other, i.e., the mutual distance between the projection operators $\{\bP_\cD - \bP_{\cI}\}_{\cD \in \ccA_{\widehat{s}}}$ is significantly smaller compared to the case when they are weakly correlated. As an example, let us consider the equi-correlated Gaussian design, i.e., the row vectors $\{x_i\}_{i\in [n]}$ of $\bX$ in \eqref{eq: base model} follows i.i.d. mean-zero Gaussian distribution with covariance matrix

\begin{equation*}
%\label{eq: equi-corr matrix}
\bSigma = (1-r) \bbI_p + r \mathbf{1}_p\mathbf{1}_p^\top.
\end{equation*}
For the sake of simplicity, we also assume that the true model is a singleton set. In particular, we consider $\cS = \{1\}$ and set $\widehat{s} = 1$. Also, note that in this case $\ccA_{\widehat{s}} = \{j\in [p] : j \neq 1\}$ and $\cI = \emptyset$. Under this setup, we have $\cG_\emptyset^{(1)} = \{\bX_j \bX_j^\top/\norm{\bX_j}_2^2: j \notin \cS\}$ and
$
n^{-1}\norm{\bX_{j} - \bX_k}_2^2 \approx  2(1 - r), 
%\quad \text{for all $j,k\neq 1$}.
$
for all $j,k\neq 1$.
If $r$ is very close to 1 in the above display, then it follows that the vectors $\{\bX_j/\sqrt{n}\}_{j \neq 1}$ are extremely clustered towards each other, and as a result, the spurious projection operators are also very close to each other in operator norm. Due to this, the complexity measure $\ccE_{\cG_\emptyset^{(1)}}$ becomes extremely small and the subspaces become almost indistinguishable. In contrast, when the features are approximately uncorrelated, i.e., $r \approx 0$, the scaled features $\{\bX_j/\sqrt{n}\}_{j\neq 1}$ are roughly orthogonal. In that case the 
\[
n^{-1}\norm{\bX_{j} - \bX_k}_2^2 \approx  2, \quad \text{for all $j,k\neq 1$}.
\]
As an example, for $p=6$ and $s=1$,
Figure~\ref{fig: angle spurious subspaces} illustrates a similar phenomenon in 3-dimension. Figure \ref{fig: angle spurious subspaces}(a) clearly shows that for the case $r=0$ the angle is larger compared to the $r=0.9$ case in Figure \ref{fig: angle spurious subspaces}(b).
\begin{figure}
    \centering
    \subfloat[\centering ]{{\includegraphics[width=0.5\textwidth]{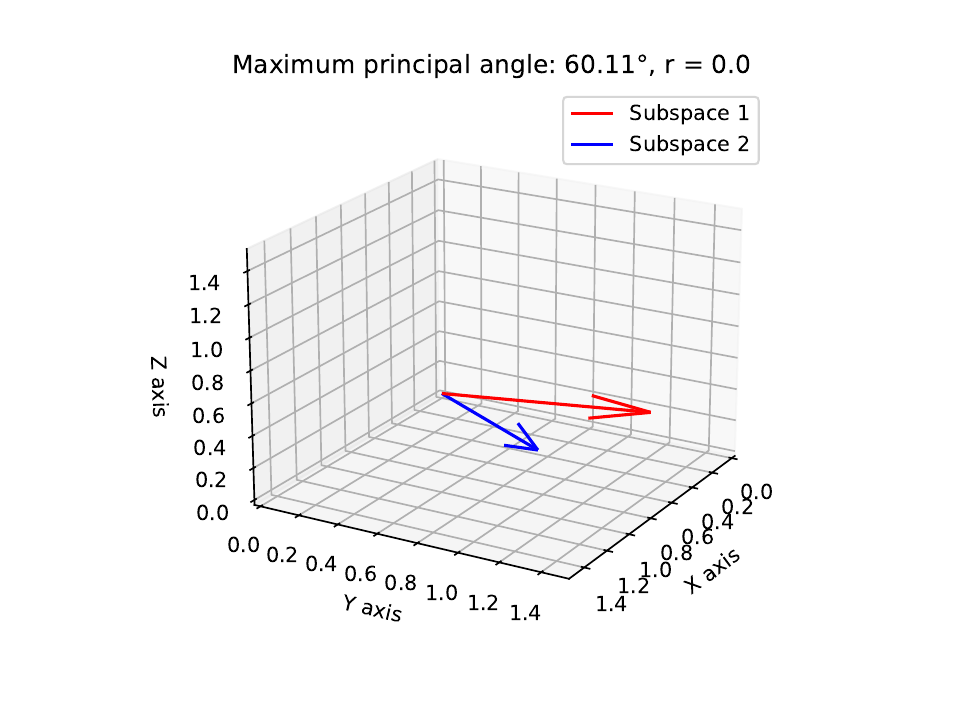} }}%
    \subfloat[\centering  ]{{\includegraphics[width=0.5\textwidth]{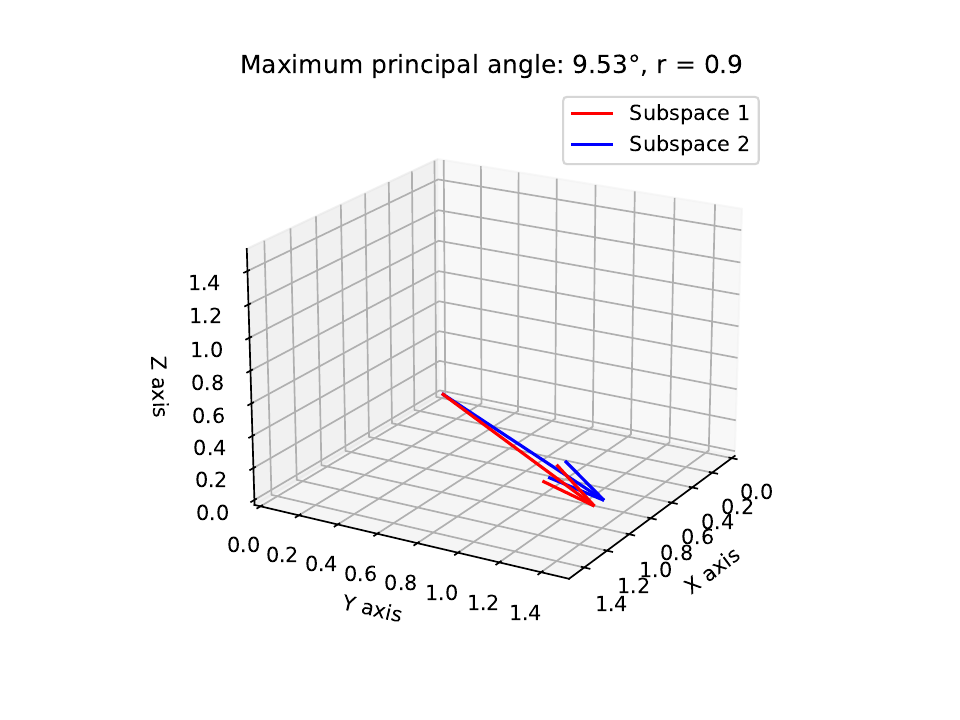} }}%
    \caption{(a) shows the angle between spurious features in $\cG^{(1)}_\emptyset$ for $r = 0$, (b) shows the angle between spurious features in $\cG^{(1)}_\emptyset$ for $r = 0.9$.}
    \label{fig: angle spurious subspaces}
\end{figure}
This suggests that the linear spans generated by each of the set of features $\{n^{-1/2}\bX_j\}_{j\neq 1}$ are well separated and $\ccE_{\cG_\emptyset^{(1)}}$ is well bounded away from zero. Thus, it follows that the features are well spread out in $\bbR^n$. This phenomenon indicates that a higher correlation may aid the model recovery chance for BSS by reducing the search space over the features. As we will see in our subsequent discussion in Section \ref{sec: illustartive examples}, the correlation between noise variables can significantly help BSS to identify the correct model. Specifically, we construct an example where the true variables are uncorrelated with the noise variables and show that a high correlation among noise variables helps BSS to identify the correct model.
The intuition is that under the presence of correlation, the diversity of the elements in $\cG_{\cI}^{(\widehat{s})}$ gets reduced as $\ccE_{\cG_{\cI}^{(\widehat{s})}}$ becomes small. Thus, BSS needs to search on a comparatively smaller feature space rather than searching over all possible $\binom{p-\widehat{s}}{\widehat{s}- \abs{\cI}}$ models, which in turn aids the probability of finding the correct model out of the other candidate ones. Thus, the smaller complexity of $\cG_{\cI}^{(\widehat{s})}$ counteracts the adverse effect of correlation to some degree, and it may improve the model recovery performance of BSS. 

However, it is not necessary that the complexity will be small for a highly correlated structure. For example, in the above case, if $\cS = \{1,2\}$, then $\cG^{(1)}_\emptyset$ might be closer to 1, i.e., the maximum principal angles between the subspaces are closer to $90^\circ$ 
 as shown in Figure \ref{fig:angle btw 2d subspaces}. Therefore, the complexity solely depends on the distribution and correlation structure of the design matrix and may behave differently on a case-by-case basis.
\begin{figure}
    \centering
    \includegraphics[width=0.8\linewidth]{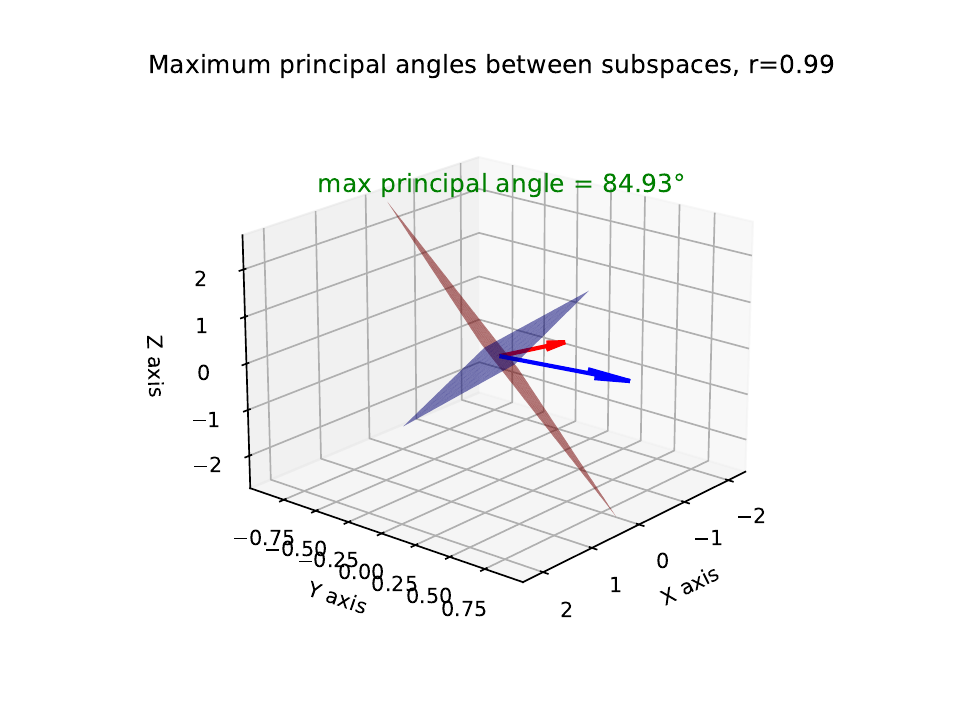}
    \caption{Maximum principal angle between the subspaces $U:=\text{span}\{\bX_1, \bX_2\}$ and $V:= \text{span}\{\bX_3, \bX_4\}$ for $r = 0.99$ under equi-correlated Gaussian design.}
    \label{fig:angle btw 2d subspaces}
\end{figure}

\paragraph{Correlation and residualized signals:}
Now we shift our focus to understanding the behavior of the set of normalized residualized signals denoted by $\cT_{\cI}^{(\widehat{s})}$. Recall that for a fixed $\cI$, the set $\cT_{\cI}^{(\widehat{s})}$ denotes the collection of all the unit vectors $\widehat{\bgamma}_\cD$ (defined in Section \ref{sec: space of unexplained signals}) such that $\cD\cap \cS = \cI$. Similar to $\cG_{\cI}^{(\widehat{s})}$, the complexity of the set $\cT_{\cI}^{(\widehat{s})}$ also depends on the correlation structure among the features. To elaborate more on this, we revisit the example of equi-correlated Gaussian design with correlation parameter $r$ and $\cS = \{1\}$. We denote by $\bP_j$ the orthogonal projection operator onto the span of $\bX_j$, i.e.,
$
\bP_j = \bX_j \bX_j^\top/\norm{\bX_j}_2^2.
$
Similar to the previous section, in this case also the set $\cT_\emptyset^{(1)}$ consists of the scaled residualized signals that take the following form for large $n$ with high probability:
\[
\widehat{\bgamma}_j = \frac{(\bbI_n - \bP_j)\bX_1}{\norm{(\bbI_n - \bP_j)\bX_1}_2} \approx \frac{\bX_1 - r \bX_j}{\norm{\bX_1 - r \bX_j}_2},\quad \text{for all $j\neq 1$.}
\]
% Hence, for large $n$ it follows that
% \[
% \widehat{\gamma}_j \approx \frac{\bX_1 - r \bX_j}{\norm{\bX_1 - r \bX_j}_2},
% \]
% with high probability.
Also, note that
\[
\widehat{\gamma}_j^\top \widehat{\gamma}_k \approx \frac{1 - 2 r^2 + r^3}{1 - r^2} =: f(r).
\]
Since, $f(r)$ is a strictly decreasing function on $[0,1)$, and $\norm{\widehat{\bgamma}_j - \widehat{\bgamma}_k}_2^2 = 2(1 - \widehat{\bgamma}_j^\top \widehat{\bgamma}_k)$, it follows that $\sfd_{\cT_\emptyset}\geq 1/2$ when $r$ is very close to 1. On the contrary, when $r\approx 0$, the above display suggests that $\sfD_{\cT_\emptyset^{(1)}} \approx 0$, i.e., for uncorrelated design, the complexity $ \ccE_{\cT_\emptyset^{(1)}}$ of the set $\cT_\emptyset^{(1)}$ is smaller compared to the highly correlated case which is in sharp contrast with the behavior of $\ccE_{\cG_\emptyset^{(1)}}$. 

However, it is worth pointing out that the above property of $\ccE_{\cT_\emptyset^{(1)}}$ is very specific to the above considered model. There may exist a correlated structure where higher correlation among noise variables does not increase $\ccE_{\cT_\emptyset^{(1)}}$ (see Section \ref{sec: illustartive examples}), and improves the chance of identifying the correct model via BSS. However, understanding such a phenomenon for a more general design could be significantly more challenging.

\section{Theoretical properties of BSS}
\label{sec: main results}
\subsection{Model selection consistency of BSS under known sparsity}
\label{sec: sufficient condition}
This section illustrates the interaction between the identifiability margin \eqref{eq: identifiability margin} and the two complexities that characterize the sufficient condition for the exact model recovery. From here on, we assume that the true sparsity is known, i.e., we set $\widehat{s} = s$ in \eqref{eq: BSS optimization}, and BSS searches the best model out of all possible models of size $s$. We now introduce a technical assumption that essentially prevents the noisy features from becoming highly correlated with the true features:

\begin{assumption}
\label{assumption: noisy features are not too correlated}
The design matrix $\bX$ enjoys the following property:
% There exists a positive constant $\Psi_\cG$ such that
\[
\min_{\cI \subset \cS} \ccE_{\cG_{\cI}^{(s)}} > \{\log(ep)\}^{-1/2}.
\]
\end{assumption}
% Moreover, we also show that a similar margin condition depending on the \emph{leave-one-out} maximum identifiability margin and similar complexity measures of the unexplained signal and feature spaces. 
The above assumption ensures that the noisy features are distinguishable enough from the active features in order for BSS to identify the active features. To see this, consider the case when the noise variables are highly correlated with the true features $\{\bX_j\}_{j \in \cS}$. In this case, the projection operator $\bP_\cD - \bP_{\cI}$ can be written as $(\bbI_n - \bP_{\cI}) \bP_\cD$ for all $\cD \in \cG_\cI^{(s)}$, whenever $\cI \neq \emptyset$. As the features in $\{\bX_j : j \in \cD \setminus \cS\}$ are highly correlated with $\bX_\cI$, it follows that $\norm{\bP_\cD - \bP_\cI}_{\op} \approx 0$ and by triangle inequality it follows that $\norm{\bP_\cD - \bP_{\cD^\prime}}_{\op} \approx 0$ for any two candidate models $\cD$ and $\cD^\prime$ such that $\cD \cap \cS = \cD^\prime \cap \cS = \cI$. Thus, Assumption \ref{assumption: noisy features are not too correlated} gets rid of such cases by indirectly controlling the correlation between the active features and noisy features. Secondly, the assumption also enforces diversity among the noise variables in the following sense: If the features $\{\bX_j : j \notin \cS^c\}$ are too similar to each other, then also $\ccE_{\cG_\cI^{(s)}}$ shrinks towards 0. Thus, Assumption \ref{assumption: noisy features are not too correlated} prevents the noise variables from becoming extremely correlated with each other.

Assumptions with similar spirits are fairly common in the literature on high-dimensional statistics. For example, the well-known Sparse Riesz Condition (SRC) \citep{zhang2008sparsity} assumes that there exist positive numbers $\kappa_-, \kappa_+$ and $\Psi \geq 1$ such that
\begin{equation}
\label{eq: SRC condition}
\kappa_- \leq  \frac{\norm{\bX \bv}_2^2}{n}\leq \kappa_+, \quad \text{for all $\bv \in  \{\bu \in \bbR^p: \norm{\bu}_2 = 1, \norm{\bu}_0\leq \Psi s\}$.}
\end{equation}
The above SRC condition controls the maximum and minimum eigenvalues of all the models of size $s$, which essentially prevents the features to become extremely correlated with each other. In comparison, Assumption \ref{assumption: noisy features are not too correlated} is much weaker than SRC condition 
% as the number of conditions in Assumption \ref{assumption: noisy features are not too correlated} is $2^s$ compared to $\binom{p}{s}$ many conditions in SRC assumption.
in two aspects. First,
 unlike the SRC,  Assumption \ref{assumption: noisy features are not too correlated} imposes conditions only over $(2^s - 2)$ models, whereas SRC imposes conditions on $\Omega((p/s)^{\floor{\Psi s}})$ many models.
 Second, the lower bound requirement in Assumption \ref{assumption: noisy features are not too correlated} is rather weak as the bound decays with increasing ambient dimension and allows a higher degree of correlation among the features. In other words, SRC condition \eqref{eq: SRC condition} implies the condition in Assumption \ref{assumption: noisy features are not too correlated}, and we formalize this claim in the following proposition.

 \begin{proposition}
     Let the columns of $\bX$ be normalized, i.e., $\norm{\bX_j}_2 = \sqrt{n}$. Also, assume that there exist positive constants $\kappa_-, \kappa_+$ such that the SRC condition \eqref{eq: SRC condition} holds with $\Psi = 2$. Then the condition in Assumption \ref{assumption: noisy features are not too correlated} also holds for large enough $p$, i.e., $\min_{\cI \subset \cS }\ccE_{\cG_\cI^{(s)}} \geq \kappa_-/\kappa_+ \gg \{\log(ep)\}^{-1/2}$. Furthermore, the implication in the other direction is not true in general.
    \end{proposition}

Next, we assume that the noise in model \eqref{eq: base model} is sub-Gaussian. In particular, we assume the following:
\begin{assumption}
\label{assumption: noise}
    We assume that the noise $\{\varepsilon_i\}_{i \in [n]}$  in model \eqref{eq: base model}
 are i.i.d. \textit{mean-zero} $\sigma$-sub-Gaussian noise, i.e., $\max_{i \in [n]}\bbE \exp(t \varepsilon_i) \le \exp(\sigma^2 t^2/2)$ for all $t \in \bbR$.
\end{assumption}

Now we are ready to state our main sufficiency result.
\begin{theorem}[Sufficiency]
    \label{thm: sufficiency of BSS}
    % Let $c_\cT, c_\cG$ be arbitrary positive constants and $c_5,c_6$ be universal positive constants such that 
    % $ 1 + \log s < 0.5 \min\{2 c_\cT^2, c_4, c_5 c_\cG\} \log (ep) $ and
    Under Assumption \ref{assumption: noisy features are not too correlated} and Assumption \ref{assumption: noise}, there exists a positive universal constant $C_0$ such that for any $0\leq \eta <1$, whenever the identifiability 
    margin $\tau_*(s)$ satisfies 
    \begin{equation}
    \label{eq: margin cond}
    \begin{aligned}
&\frac{\tau_*(s)}{\sigma^2} \geq\\ & \frac{C_0}{(1- \eta)^2} \left[\max\left\{
\max_{\cI \subset \cS} \ccE_{\cT_\cI^{(s)}}^2, \max_{\cI \subset \cS} \ccE_{\cG_\cI^{(s)}}^2 
\right\}+ \sqrt{\frac{\log(es)\vee \log\log(ep)}{\log(ep)}}\right] \frac{\log(ep)}{n},
\end{aligned}
    \end{equation}
    we have 
%  \[
% \pr(\widehat{\cS}_{\rm best} \neq \cS) \lesssim (ep)^{- c_\cT^2} + (ep)^{ -c_5 \Psi_\cG^2/2} + (ep)^{- c_4 \Psi_\cG^2/2}.
%  \]
 \[
\{\widehat{\cS}_{\rm best}(s)\} \subseteq \left\{
 \widehat{\cS}: \vert\widehat{\cS}\vert = s, R_{\widehat{\cS}} \leq  \min_{\cD
 \in \ccA_s } R_{\cD}+ n \eta \tau_*(s) 
 \right\} = \{\cS\},
 \]
 with probability at least $1 - O(\{s \vee \log p\}^{-1})$. 
 In particular, we have $\cS = \argmin_{\cD\in \ccA_s} R_{\cD}$ with high probability.
% with $c_8$ being universal positive constants.
\end{theorem}

% \begin{theorem}[Sufficiency]
% \label{thm: sufficiency of BSS}
% Let $c_\cT, c_\cG$ be arbitrary positive constants. Also, assume that the following margin condition holds,
% \begin{equation}
% \label{eq: margin cond}
% \tau_*(s)  \geq \max_{k \in [s]} \left[ \max\left\{16( 10 \sfD_{\cT_k} + c_\cT)^2, 64\sqrt{2} (\sfD_{\cG_k} + c_\cG \sfd^\circ_{\cG_k}) \right\}\right] \sigma^2 \frac{\log(ep)}{n}.
% \end{equation}
% Then, we have
% \[
%     \pr(\widehat{\cS}_{\rm best} \neq \cS) \leq 6 (ep)^{-2} + 2(ep)^{-c_\cT^2 /2} + 4 (ep)^{-4 c_\cG }.
% \]
% \end{theorem}

% \begin{theorem}[Sufficiency]
% \label{thm: sufficiency of BSS}
% Let $c_\cT, c_\cG$ be arbitrary positive constants. Also, assume that the following margin condition holds,
% \begin{equation}
% \label{eq: margin cond}
% \tau_*(s)  \geq \max_{k \in [s]} \left[ \max\left\{16( 10 \sfD_{\cT_k} + c_\cT)^2, 64\sqrt{2} (\sfD_{\cG_k} + c_\cG \sfd^\circ_{\cG_k}) \right\}\right] \sigma^2 \frac{\log(ep)}{n}.
% \end{equation}
% Then, we have
% \[
%     \pr(\widehat{\cS}_{\rm best} \neq \cS) \leq 6 (ep)^{-2} + 2(ep)^{-c_\cT^2 /2} + 4 (ep)^{-4 c_\cG }.
% \]
% \end{theorem}
The proof of the above theorem is present  {\color{black}Section S1.3} of the supplementary material.
The above theorem gives a sufficient condition for BSS to achieve model consistency. The above theorem states that under the margin condition \eqref{eq: margin cond} the true model $\cS$ is the optimizer of the BSS problem. Furthermore, the parameter $\eta$ quantifies the magnitude of the sub-optimality gap. For $\eta>0$, the above theorem shows that $R_\cD - R_\cS > n \eta \tau_*(s)$ for any $\cD \in \ccA_s$, i.e, the gap between the optimal RSS value $R_\cS$ and the next smallest RSS is more pronounced for larger values of $\eta$. However, this is more demanding than just the requirement for $R_S$ being the optimal value, and hence the margin condition \eqref{eq: margin cond} is more stringent for $\eta>0$ compared to $\eta = 0$ case.
%Note that, the constants $c_\cT$ and $c_\cG$ can take arbitrary positive values. 
%{\color{red}Thus, smaller choices of the constants lead to sharper margin condition, however, at the cost of a weaker guarantee for exact recovery as the upper bound in Theorem \ref{thm: sufficiency of BSS} deprecates for smaller choices of the constants $c_\cT$ and $c_\cG$.} %\sapta{replace this with the story of $\Psi_\cG$}

Next, note that the margin condition 
\eqref{eq: margin cond} involves the identifiability margin $\tau_*(s)$ and the two complexities associated with the sets of residualized signals and spurious projection operators. This condition reveals an interesting interplay between the identifiability margin and the two complexities. To highlight this phenomenon, it is instructive to consider the case when the true model $\cS = \{1\}$ and $\bX_1$ is orthogonal to the spurious features $\{\bX_j\}_{j \neq 1}$. However, the spurious features are allowed to be extremely correlated to each other. As mentioned in the independent block design example in Section \ref{sec: illustartive examples}, in this case, both of the two complexities are small for higher correlation among the spurious features, whereas $\tau_*(s)$ remains roughly unaffected by the strength of correlation. Thus, the margin condition \eqref{eq: margin cond} becomes less stringent with increasing strength of correlation, and the performance of BSS should improve. To illustrate this phenomenon, we consider a simulation setup with $p = 2000, n = 500$, and $s = 1$. We generate $\bX$ from independent Gaussian block design mentioned in Section \ref{sec: illustartive examples} with the cross-correlation $c = 0$, and $r \in [0,1)$ being the correlation within the noise variables. Thus, $r=0$ corresponds to the independent Gaussian design. We set $\bbeta = (0.1, 0,\ldots, 0)^\top \in \bbR^p$, and the errors $\{\varepsilon_i\}_{i \in [n]}$ are generated in i.i.d. fashion from $\sfN(0,1)$. Finally, the response $\by$ is generated according to model \eqref{eq: base model}. Assuming $s$ is known, we use ABESS \citep{zhu2020polynomial} as a fast computational surrogate for BSS. The left panel of Figure \ref{fig: simul} shows that the mean model recovery rate of ABESS (across 20 independent runs) increases as the correlation between the noise variables increases to 1, which validates the findings in Theorem \ref{thm: sufficiency of BSS}. The right panel of Figure \ref{fig: simul} also shows that a similar phenomenon is true even for  $s> 1$.

\begin{figure}[h!]
     \centering
     \begin{subfigure}[b]{0.45\textwidth}
         \centering
         \includegraphics[scale = 0.57]{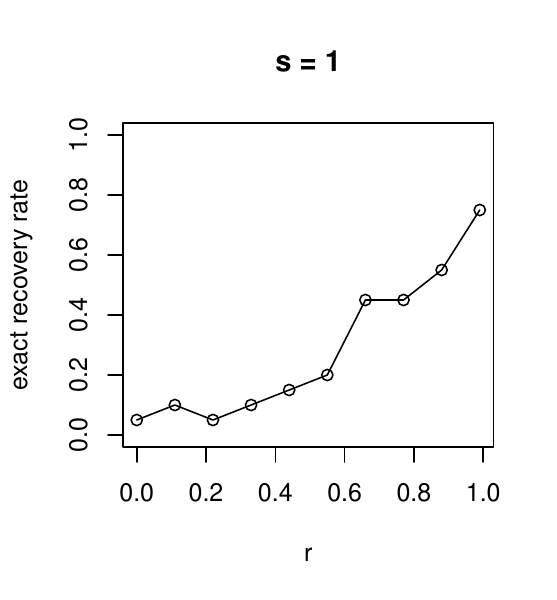}
         %\caption{df}
         \label{fig: regions}
     \end{subfigure}
     \hfill
     \begin{subfigure}[b]{0.45\textwidth}
         \centering
         \includegraphics[scale = 0.57]{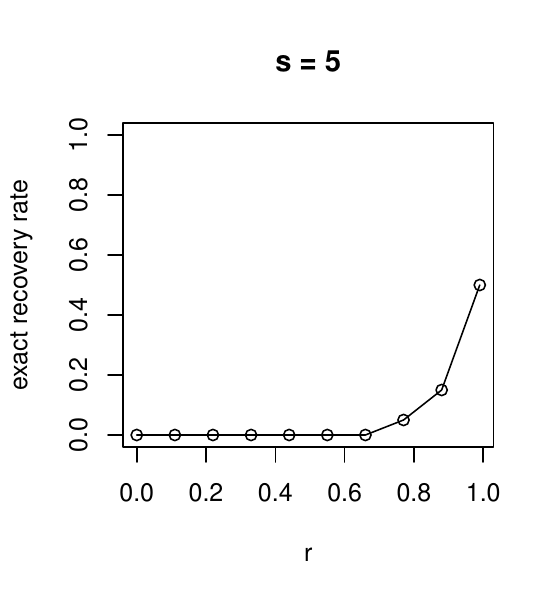}
         %\caption{}
         %\label{fig: Equi-corr}
     \end{subfigure}
        \caption{Model recovery rate of ABESS under independent block design.}
        \label{fig: simul}
\end{figure}

On the other hand, as mentioned in Remark \ref{remark: equi-cor}, under equicorrelated model with $\cS = \{1\}$ and high correlation, $\ccE_{\cT_\emptyset^{(1)}}$ remains strictly bounded away from 0 and it dominates $\ccE_{\cG_\emptyset^{(1)}}$.  However, the identifiability margin $\tau_*(s)$ becomes very small due to the high correlation between the true and noise variables. Hence, the margin condition \eqref{eq: margin cond} becomes harder to satisfy with increasing correlation. In the case of independent design, it turns out $\ccE_{\cG_\emptyset^{(1)}}$ is the dominating complexity measure. This is not surprising as under independent design, the features are more spread out in the feature space compared to correlated design, whereas the residualized signals are more concentrated towards a single unit direction, making $\ccE_{\cT_\emptyset^{(1)}}$ smaller compared to $\ccE_{\cG_\emptyset^{(1)}}$.
%which is in sharp contrast to the highly correlated case.

The above discussion shows that apart from the quantity $\tau_*(s)$, the complexity of residualized signals and the complexity of spurious projection operators also play a decisive role in the margin condition of the best subset selection problem. Specifically, the set with higher complexity characterizes the margin condition in Theorem \ref{thm: sufficiency of BSS}. 

% For instance, we again consider the equicorrelated design with $\cS = \{1\}$.  On the contrary, for higher correlation, we witness an opposite phenomenon as $\ccE_{\cT_\emptyset}$ dominates $\ccE_{\cG_\emptyset}$ (See Section \ref{sec: illustartive examples}).

We can further represent the condition \eqref{eq: margin cond} in terms of the diameter of the sets $\cT_\cI^{(s)}$ and $\cG_\cI^{(s)}$.
To see this, recall that $\ccE_{\cT_{\cI}^{(s)}} \leq \sfD_{\cT_{\cI}^{(s)}}$ and $\ccE_{\cG_{\cI}^{(s)}} \leq \sfD_{\cG_{\cI}^{(s)}}$ for all $\cI \subset \cS$. Under the light of this fact, we have the following corollary:
\begin{corollary}
\label{cor: sufficiency of BSS 2}
%     Let $c_\cT, c_\cG$ be arbitrary positive constants and $c_5,c_6$ be universal positive constants such that 
%     $ 1 + \log s < 0.5 \min\{2 c_\cT^2, c_4, c_5 c_\cG\} \log (ep) $ and
%     \begin{equation}
%     \label{eq: margin cond 2}
% \tau_*(s) \geq \max\left\{  \max_{ \cI \subset \cS} (c_6\sfD_{\cT_{\cI}} + 2 c_{\cT})^2, \max_{ \cI \subset \cS} c_5 (\sfD_{\cG_{\cI}} + c_\cG ) \right\} \frac{\sigma^2 \log(ep)}{n}.
%     \end{equation}
%  Then we have
%  \[
% \pr(\widehat{\cS}_{\rm best} \neq \cS) \lesssim (ep)^{- c_\cT^2} + (ep)^{ -c_5 c_\cG/2} + (ep)^{- c_4/2}.
%  \] 
    Let the conditions in Assumption \ref{assumption: noisy features are not too correlated} and Assumption \ref{assumption: noise} hold. Then there exists a 
    positive universal constant $C_0$ such that for any $0\leq \eta <1$, whenever the identifiability 
    margin $\tau_*(s)$ satisfies 
    \begin{equation}
    \label{eq: margin cond 2}
    \begin{aligned}
&\frac{\tau_*(s)}{\sigma^2} \geq \\
& \frac{C_0}{(1- \eta)^2} \left[\max\left\{
\max_{\cI \subset \cS} \sfD_{\cT_\cI^{(s)}}^2, \max_{\cI \subset \cS} \sfD_{\cG_\cI^{(s)}}^2 
\right\}+ \sqrt{\frac{\log(es)\vee \log\log(ep)}{\log(ep)}}\right] \frac{\log(ep)}{n},
\end{aligned}
    \end{equation}
     we have 
%  \[
% \pr(\widehat{\cS}_{\rm best} \neq \cS) \lesssim (ep)^{- c_\cT^2} + (ep)^{ -c_5 \Psi_\cG^2/2} + (ep)^{- c_4 \Psi_\cG^2/2}.
%  \]
 \[
\{\widehat{\cS}_{\rm best}(s)\} \subseteq\left\{
  \widehat{\cS}: \vert\widehat{\cS}\vert = s,  R_{\widehat{\cS}} \leq  \min_{\cD \in \ccA_s } R_{\cD}+ n \eta \tau_*(s) 
 \right\} = \{\cS\},
 \]
 with probability at least $1 - O(\{s \vee \log p\}^{-1})$.
 In particular, we have $\cS = \argmin_{\cD\in \ccA_s} R_{\cD}$ with high probability.
\end{corollary}

Corollary \ref{cor: sufficiency of BSS 2} essentially conveys the same message as Theorem \ref{thm: sufficiency of BSS}, only under a slightly stronger margin condition \eqref{eq: margin cond 2}. However, in some cases, it could be comparatively easier to give theoretical guarantees on the diameters $\sfD_{\cT_{\cI}^{(s)}}, \sfD_{\cG_{\cI}^{(s)}} $ rather than their corresponding complexity measures $\ccE_{\cT_{\cI}^{(
s)}}, \ccE_{\cG_{\cI}^{(s)}}$ respectively. In the next section, we will discuss a few illustrative examples to further elaborate on the effects of two complexities.

\subsection{Illustrative examples}
\label{sec: illustartive examples}
In this section, we will discuss a few illustrative examples to highlight the effect complexities of the two spaces described in Section \ref{sec: space of unexplained signals} and Section \ref{sec: space of projections}. 
% As examples, we will mainly consider the setups described in Remark \ref{remark: equi-cor} and Remark \ref{remark: block-corr} highlight the effects of identifiability margin and two complexities in \eqref{eq: margin cond 2}.

\subsubsection*{Block design with a single active feature}
\label{sec: equi-corr design example}
Consider the model \eqref{eq: base model} where the rows of $\bX$ are independently generated from $p$-dimensional multivariate Gaussian distribution with mean-zero and variance-covariance matrix
\[
\bSigma = \begin{pmatrix}
1 & c \boldsymbol{1}_{p-1 }^\top\\
c\boldsymbol{1}_{p-1} & (1- r)\bbI_{p-1} + r \boldsymbol{1}_{p-1} \boldsymbol{1}_{p-1}^\top
\end{pmatrix},
\]
where $c \in [0, 0.997], r \in [0,1)$. We need to further impose a restriction 
$$c^2< r + \frac{1-r}{p-1}$$ 
to ensure positive definiteness of $\bSigma$.
In this case, we also set the true model $\cS = \{1\}$ and the noise variance $\sigma =1$. Recall that in this case the sets of residualized signals and spurious projection operators are denoted by $\cT_\emptyset^{(1)}$ and $\cG_\emptyset^{(1)}$ respectively. Under this setup, we have the following lemma:

\sapta{Finish the Lemmas}
\begin{lemma}
    \label{lemma: example independent}
    Assume that $\log p = o(n)$. Then under the above setup, there exist universal positive constants $C, L, M$ such that the followings are true with $\tol = C \{(\log p)/n\}^{1/2}$:
    \begin{enumerate}[label = (\alph*)]
        \item \label{lemma: margin equi-corr independent} For large enough $n,p$ we have 
       \begin{equation*}
    %\label{eq: prob margin}
    \begin{aligned}
    &\pr \left[ \left\{1 + \tol - \frac{(c - \tol)^2}{1 + \tol} \right\}\geq \frac{\tau_*(1)}{\beta_1^2} \geq  \left\{1 - \tol - \frac{(c+ \tol)^2}{1 - \tol} \right\} \right]\\
    & = 1 + o(1/p).
    \end{aligned}
\end{equation*}

        \item \label{lemma: cT complexity independent}
        For large enough $n,p$ we have
 \begin{align*}
&\pr \left[\max\left\{ \frac{2 c^2(1-r)}{1-c^2} - L \tol, 0\right\}\leq\sfd_{\cT_\emptyset^{(1)}}^2\leq\sfD_{\cT_\emptyset^{(1)}}^2 \leq  \frac{2 c^2(1-r)}{1-c^2} + L \tol \right]\\
& = 1+ o(1/p).
\end{align*}

\item \label{lemma: cG complexity independent}
        For large enough $n,p$ we have
        \begin{align*}
&\pr \left[ \max\left\{ (1-r^2) - M \tol, 0\right\}\leq\sfd_{\cG_\emptyset^{(1)}}^2\leq\sfD_{\cG_\emptyset^(1)}^2 \leq  (1-r^2) + M \tol \right] \\
& = 1+ o(1/p).
        \end{align*}
    \end{enumerate}
\end{lemma}

% \begin{lemma}
%     \label{lemma: equi-corr example correlated}
%     Under the above setup with $r\in (3/4,1)$, the followings are true:
%     \begin{enumerate}[label = (\alph*)]
%         \item \label{lemma: margin correlated} If $n \gtrsim \delta^{-2} \log p$ for small enough $\delta \in (0,1)$, then 
%         \[
% \pr \left\{ \frac{\tau_*(1)}{\beta_1^2} \leq ?? \right \} = 1+ o(1/p).
%         \]

%         \item \label{lemma: cT complexity correlated}
%         If $n \gtrsim \delta^{-2} \log p$ for small enough $\delta \in (0,1)$, then 
%         \[
% \pr \Big(\sfd_{\cT_\emptyset} \geq  ?? \Big) = 1+ o(1/p).
%         \]

% \item \label{lemma: cG complexity correlated}
%         If $n \gtrsim \delta^{-2} \log p$ for small enough $\delta \in (0,1)$, then 
% \[
% \pr \Big( \sfD_{\cG_\emptyset} \geq ?? \Big) = 1 + o(1/p)
% \]
%     \end{enumerate}
% \end{lemma}

% \subsubsection*{Block correlated design with a single active feature}
% \label{sec: block-corr design example}
\sapta{Include the Lemmas}
From part \ref{lemma: cT complexity independent} and \ref{lemma: cG complexity independent} of the above lemma, it follows that
the complexity $\ccE_{\cT_\emptyset^{(1)}} \approx 0$ when $c =0$. For any fixed $c>0$ and $r \in [0,1)$, we have
\begin{equation}
\label{eq: two complexity trend example}
\ccE_{\cT_\emptyset^{(1)}}^2 \sim  \frac{2c^2(1-r)}{1-c^2},\quad \text{and} \quad \ccE_{\cG_\emptyset^{(1)}}^2 \sim (1-r^2) \quad \text{for large $n,p$}.
\end{equation}
A detailed derivation of the result is present in Section S1.5 of the supplementary material. Left panel of Figure \ref{fig: complexities} shows the partition of $c\mhyphen r$ plane based on the dominating complexity. It is worthwhile to note that a high value $r$, i.e., a high correlation among the noise variables results in a smaller value of the complexity terms in \eqref{eq: margin cond}. However, Lemma \ref{lemma: example independent}\ref{lemma: margin equi-corr independent} suggest that $\tau_*(1)/\beta_1^2 \sim (1-c^2)$, i.e.,  higher correlation between true and noise variables shrinks the margin quantity $\tau_*(1)$ towards 0. This suggests that a smaller value of $c$ and a higher value $r$ is more favorable to BSS than other possible choices of $(r,c)$. We now discuss these phenomena through some selected examples.
\begin{figure}[t!]
     \centering
     \begin{subfigure}[b]{0.45\textwidth}
         \centering         \includegraphics[width = \textwidth]{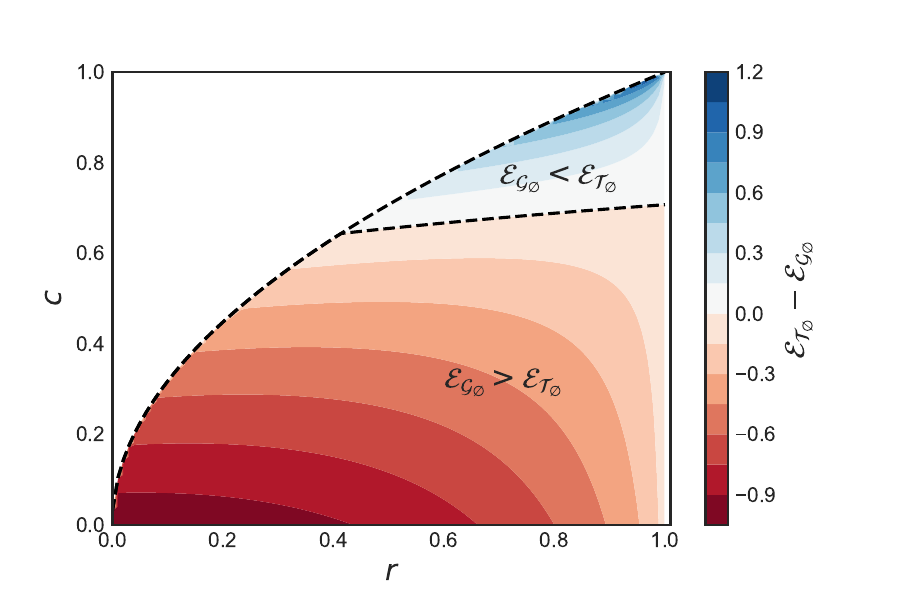}
         %\caption{}
         %label{fig: regions}
     \end{subfigure}
     \hfill
     \begin{subfigure}[b]{0.45\textwidth}
         \centering         \includegraphics[width = \textwidth]{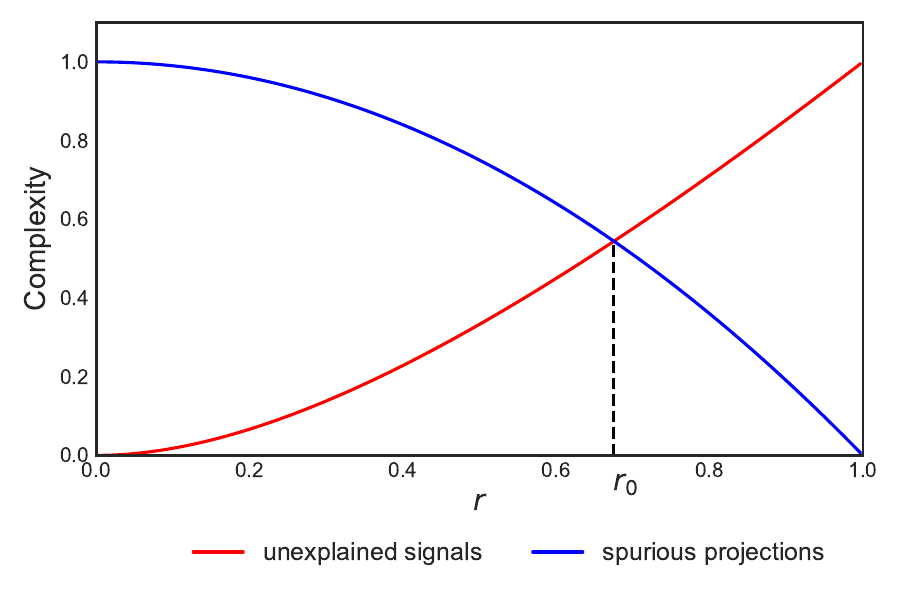}
         %\caption{}
         %\label{fig: Equi-corr}
     \end{subfigure}
        \caption{(left) Partition of $c\mhyphen r$ plane showing dominating regions for the two complexities. The color gradient indicates the value of $\ccE_{\cT_\emptyset^{(1)}} - \ccE_{\cG_\emptyset^{(1)}}$. (right) The plot of two complexities for varying $r$ under equicorrelated design.}
        \label{fig: complexities}
\end{figure}

% \begin{figure}[!htbp]
% \subfloat{\includegraphics[width=.45\textwidth]{example-image}}
% \quad
% \subfloat{\includegraphics[width=.45\textwidth]{example-image}}
% \caption{Some random caption}
% \label{fig1}
% \end{figure}

\paragraph{Independent design: }
In this case $c = r= 0$. In this case \eqref{eq: two complexity trend example} suggest that $\ccE_{\cG_\emptyset^{(1)}}^2  \approx 1$. On the other hand, we see that $\ccE^2_{\cT_\emptyset^{(1)}} \approx 0$. Thus, the complexity of spurious projections is dominant in this case. Also, in this case, $\tau_*(1) \approx \beta_1^2$ which suggests that higher signal strength results in a better performance in terms of model selection.

\paragraph{Independent block design:} 
In this case, we set $c=0$ and we vary $r$ in $(0,1)$. Note that \eqref{eq: two complexity trend example} tells that $\ccE_{\cT_\emptyset^{(1)}}^2 
 \approx 0$, and  $\ccE_{\cG_\emptyset^{(1)}}^2$ has a decreasing trend with $r \in (0,1)$. This suggests that the independent block design with a high value of $r$ is more favorable for BSS to identify the true model compared to the independent random design model in the previous example. Finally, noting the fact that $\tau_*(1) \approx \beta_1^2$, we can conclude that for high values of $r$, the sufficient condition in Theorem \ref{thm: sufficiency of BSS} becomes less stringent. 

 \paragraph{Equicorrelated design:}
 Here we set $c = r $ and vary $r$ in the interval $[0,1)$. Let $r_0$ be the unique positive solution to the following equation:
 \[
 \frac{2 r^2}{1+r} - (1-r^2)=0.
 \]
 Calculation shows that $r_0 \approx 0.675$.
 Using \eqref{eq: two complexity trend example}, it follows that for $r \in [0,r_0)$ the complexity of spurious projection operators is dominating, i.e.,
 $
 \ccE_{\cG_{\emptyset}^{(1)}}^2 > \ccE_{\cT_\emptyset^{(1)}}^2.
 $
 In contrast, for $r\in( r_0,1)$, we have the complexity of the residualized signals to be dominating, i.e.,
 $
 \ccE_{\cT_{\emptyset}^{(1)}}^2 >  \ccE_{\cG_{\emptyset}^{(1)}}^2.
 $
 Right panel of Figure \ref{fig: complexities} indicates the phase transition between the two complexities. 
   Since the identifiability margin $\tau_*(1)$ roughly behaves like $ \beta_1^2 (1-r^2)$, the margin quantity becomes very small for a high value of $r$. Hence, for model consistency, we need a high value for $\beta_1^2$.

\begin{remark}
\label{remark: equi-cor}
In the example of equi-correlated design with $\cS = \{1\}$, the effect of correlation parameter $r$ on the complexities $\ccE_{\cT_\emptyset^{(1)}}$ and $\ccE_{\cG_\emptyset^{(1)}}$ are complementary to each other. In the case of the set of residualized signals, increasing correlation among the features increases the overall complexity of the set $\cT_\emptyset^{(1)}$ and vice versa. In contrast, higher correlation decreases the complexity $\ccE_{\cG_\emptyset^{(1)}}$, thus shrinking the effective size of $\cG_\emptyset^{(1)}$. Thus, in this case, the two complexities act as two opposing forces in the margin condition \eqref{eq: margin cond}. \sapta{update eq number}
\end{remark}

% \begin{remark}
% \label{remark: block-corr}
% In the example of independent block design with $c=0$, increasing the correlation shrinks the complexity of the projection operators $\cG_\emptyset^{(1)}$. Moreover, the complexity of the residualized signals is also small due to near orthogonality between the true and noise variables.  Thus, unlike the equi-correlated case, increasing $r$ clearly makes the margin condition \eqref{eq: margin cond} \sapta{update eq number} less stringent. This suggests that the block correlated model in this example is more favorable for BSS to identify the true model compared to the independent random design model where $c=r=0$.
% \end{remark}
 
\subsection{Necessary condition}
\label{sec: necessary condition for BSS}
One question that arises from the preceding discussion is whether the margin condition in Theorem \ref{thm: sufficiency of BSS} is necessary for model consistency or not. Specifically, it is natural to ask whether the complexities of residualized signals and spurious projections also characterize the necessary margin condition.  In this section, we show that a condition very similar to \eqref{eq: margin cond} is necessary for model consistency of BSS, which is also governed by a similar margin quantity and complexity measures.  

For $j_0 \in \cS$, we define the set $\ccC_{j_0}:= \{ \cD: \cS\setminus \cD = \{j_0\}, \abs{\cD} = s\} \subset \ccA_{s,1}$. We consider the maximum \emph{leave-one-out} identifiability margin for $j_0\in \cS$ as 
\begin{equation}
  \widehat{\tau}(s) := \max_{j_0 \in \cS}\max_{\cD \in \ccC_{j_0}} \frac{\bbeta_{\cS\setminus \cD}^\top \Gamma(\cD) \bbeta_{\cS\setminus \cD}}{\abs{\cS\setminus \cD}} = \max_{j_0 \in \cS}\max_{\cD \in \ccC_{j_0}}\Gamma(\cD) \beta_{j_0}^2.
\label{eq: necessary margin cond}  
\end{equation}
% Next, we define the quantity
% % \[
% % \widetilde{\sfd}_{\cG_k} := \sup_{v \in \bbS^{n-1}}\min\{ \norm{P_{\cD_1} v - P_{\cD_2}v}_2 : \cD_1, \cD_2 \in \ccA_{s,k}, \cD_1 \neq \cD_2\} . 
% % \]
% The above quantity, in a way captures the minimum separation between the column spaces of $\{X_\cD\}_{\cD \in \ccA_{s,k}}$.
% The next theorem shows that it is almost necessary by giving a condition similar to the margin requirement \eqref{eq: margin cond }.
%Next, for a set $\cB \subset \bbR^n$, we denote by $\cM(\varepsilon,\cB, \norm{\cdot}_2)$ the $\varepsilon $-packing of $\cB$ under $\ell_2$-norm. 
% Now, we introduce a technical assumption that excludes extremely correlated setups.
% \begin{assumption}
% \label{assumption: packing condition}
% There exists $j_0 \in \cS$, a constant  $\ccR_{\cG_1} >0$ such that the set $\cG_{j_0}:= \{P_\cD : \cD \in \ccC_{j_0}\}\subset \cG_1$ is $(\ccR_{\cG_1} \sfD_{\cG_1})^{1/2}$-separated in operator norm.
% \end{assumption}
% Basically, Assumption \ref{assumption: packing condition} says that the column spaces of $\{X_\cD\}_{\cD \in \ccC_{j_0}}$ are not too close to each other. 
Consider the set $\cI_0:= \cS\setminus \{j_0\}$ for a fixed index $j_0 \in \cS$. We capture the complexity of $\cT_{\cI_0}^{(s)}$ through the following quantity:
\begin{equation}
\label{eq: lower-complexity of cT}
\ccE^*_{\cT_{\cI_0}^{(s)}}:= \frac{\sup_{\delta>0} \frac{\delta}{2} \sqrt{\log \cM(\{\widehat{\bgamma}_{\cI_0 \cup \{j\}}\}_{j \in \cS^c}, \norm{\cdot}_2, \delta)}}{\sqrt{\log \vert \cT_{\cI_0}^{(s)}\vert}}.   
\end{equation}
The above display immediately shows that 
%if $p>4e$ and $s<p/2$, the following inequality holds:
$
\ccE^*_{\cT_{\cI_0}} \geq \sfd_{\cT_{\cI_0}^{(s)}}/2.
$
Also, from the property of packing and covering number, it follows that
\[
\cM(\{\widehat{\bgamma}_{\cI_0 \cup \{j\}}\}_{j \in \cS^c}, \norm{\cdot}_2, \delta) \leq \cN(\{\widehat{\bgamma}_{\cI_0 \cup \{j\}}\}_{j \in \cS^c}, \norm{\cdot}_2, \delta/2).
\]
As $\cN(\{\widehat{\bgamma}_{\cI_0 \cup \{j\}}\}_{j \in \cS^c}, \norm{\cdot}_2, \delta/2)$ is a decreasing function over $\delta \in (0,\infty)$, we have the following inequality:
\[
\sup_{\delta>0} \frac{\delta}{2} \sqrt{\log \cN(\{\widehat{\bgamma}_{\cI_0 \cup \{j\}}\}_{j \in \cS^c}, \norm{\cdot}_2, \delta/2)} \leq \int_{0}^\infty \sqrt{\log \cN(\{\widehat{\bgamma}_{\cI_0 \cup \{j\}}\}_{j \in \cS^c}, \norm{\cdot}_2, \varepsilon)}\; d\varepsilon.
\]
The above inequality further shows that $\ccE^*_{\cT_{\cI_0}^{(s)}} \leq \ccE_{\cT_{\cI_0}^{(s)}} \leq \sfD_{\cT_{\cI_0}^{(s)}}$. Hence, similar to $\ccE_{\cT_{\cI_0}^{(s)}}$, the alternative complexity measure $\ccE^*_{\cT_{\cI_0}^{(s)}}$ also captures the average separation among the elements in $\cT_{\cI_0}^{(s)}$.

Next, we focus on the set $\cG_{\cI_0}^{(s)}$ which is the collection of all the spurious projection operators of the form $\bP_{\cD} - \bP_{\cI_0}$ for all $\cD\in \ccC_{j_0}$. 
If $\cD = \cI_0 \cup \{j\}$ for some $j \in \cS^c$, then the corresponding spurious projection operator takes the form
\begin{equation}
\label{eq: diff of proj}
\bP_\cD - \bP_{\cI_0} = \widehat{\bu}_j \widehat{\bu}_j^\top,
\end{equation}
where $\widehat{\bu}_j$ denotes the unit vector along the residualized feature vector $
\bu_j := (\bbI_n - \bP_{\cI_0}) \bX_j. %\sign \left\{X_j^\top (\bbI_n - P_{\cI_0})\mu\right\}.
$
Thus, the above display basically shows that the $\bP_\cD- \bP_{\cI_0}$ is the orthogonal projection operator onto the linear span generated by the residualized feature $\bu_j$. 
%Note that the $\sign(\cdot)$ component in the definition of $u_j$ ensures that the residualized feature remains in the dual cone of the set $\{\mu\}$, i.e., the angular deviation between $u_j$ and $\mu$ is not too large. 
 % This motivates us to capture the complexity of $\cG_{\cI_0}$ through the complexity of the set of unit vectors $\cU := \{\widehat{u}_j\}_{j \notin \cS}$. To be precise, we also need to take into account the corresponding negative unit vectors $\{-\widehat{u}_j\}_{j \notin \cS}$ to properly capture the degree of separability among the projection operators $\{\widehat{u}_j \widehat{u}_j^\top\}_{j \notin \cS}$ as both $\widehat{u}_j$ and $-\widehat{u}_j$ generates same orthogonal projection operator. Hence, we define the following complexity measure for $\cG_{\cI_0}$:
 Similar to \eqref{eq: lower-complexity of cT}, we define the complexity measure of $\cG_{\cI_0}^{(s)}$ as
 \begin{equation}
     \label{eq: lower-complexity of cG}
     \ccE^*_{\cG_{\cI_0}^{(s)}}:= \frac{\sup_{\delta>0} \frac{\delta}{2} \sqrt{\log \cM(\cG_{\cI_0}^{(s)}, \norm{\cdot}_\op, \delta)}}{\sqrt{\log \vert \cG_{\cI_0}^{(s)} \vert }}.
 \end{equation}
By a similar argument, it also follows that 
$
\sfd_{\cG_{\cI_0}^{(s)}}/2 \leq \ccE^*_{\cG_{\cI_0}^{(s)}} \leq \sfD_{\cG_{\cI_0}^{(s)}}.
$
Hence, combining the above observation with \eqref{eq: diff of proj}, it also follows that $\ccE^*_{\cG_{\cI_0}^{(s)}}$ captures the  angular separation among the elusive features $\{\bu_j\}_{j \in \cS^c}$.

% If the assumption is violated then it tells that the subspaces generated by the spurious models with just one noisy feature are too close to each other, i.e., spurious variables are very correlated to each other. To elaborate more on this, let $\cI_0 := \cS \setminus \{j_0\}$ and $\mu = X_\cS \beta_\cS$. Consider the models $\cD_1:= \cI_0 \cup \{j_1\}$ and $\cD_2:= \cI_0 \cup \{j_2\}$ with $j_1, j_2 \notin \cS$. Also, define $v_{j_\ell} := (\bbI_n - P_{\cI_0})X_{j_\ell}$ and $s_{j_l}:= \text{sign}(v_{j_l}^\top \mu)$. Next, define vectors $u_{j_l}:= v_{j_l} s_{j_l}$ and the unit vectors $\widehat{u}_{j_\ell} := u_{j_\ell} /\norm{u_{j_\ell}}_2$ for $\ell \in \{1,2\}$. Thus, it follows that $u_{j_l}^\top \mu >0$ for $\ell\in \{1,2\}$. Note that 
% \[
% P_{\cD_1} - P_{\cD_2} = \widehat{u}_{j_1} \widehat{u}^\top_{j_1} - \widehat{u}_{j_2} \widehat{u}^\top_{j_2}.
% \]
% Thus, Assumption \ref{assumption: packing condition} is equivalent to saying that the set  $\{\widehat{u}_j\}_{j \notin \cS}$ is well separated in Euclidean norm, i.e., the spurious variables are not too correlated. Now we introduce the next set of assumptions that basically says that there exists a single feature $j_0$ with high explanation power.

Next, we introduce some technical assumptions that are crucial for our theoretical analysis of the necessity result.

\begin{assumption}
\label{assumption: anchor point}
% \sapta{Dual cone assumption ??}
% There exists $j_0\in \cS$ such that the following hold:
% \begin{enumerate}[label = (\alph*)]
%     \item \label{item: anchor point} There exists a constant $\xi>8$ such that 
%     \[
%     \abs{\widehat{\bu}_{j_0}^\top \bmu} \geq \xi \sigma \sqrt{\log (ep)},\; \text{and}
%     \]
%     \[
%      0 < \abs{\widehat{\bu}_{j_0}^\top \bmu} - \abs{\widehat{\bu}_{j}^\top \bmu} \leq \sigma \frac{\sqrt{\log(ep)}}{20}, \; \forall j \in \cS^c. 
%     \]

%     \item
%     \label{item: cH_G big} The complexities of the $\cG_{\cI_0}$  and $\cT_{\cI_0}$ are not too small, i.e.,
%     % \[\ccR_{\cG_{\cI_0}} = \Omega \left(\frac{1}{\sqrt{\log(ep)}}\right), \quad \text{and,}\]
%    \[ \ccEstar_{\cG_{\cI_0}}^2 > 8 \{\log(ep)\}^{-1} ,\]
%    \[
%    \ccEstar_{\cT_{\cI_0}}^2 > 16 \{\log(ep)\}^{-1}.
%    \]
% \end{enumerate}
The complexities of the $\cG_{\cI_0}^{(s)}$  and $\cT_{\cI_0}^{(s)}$ are not too small, i.e.,
    % \[\ccR_{\cG_{\cI_0}} = \Omega \left(\frac{1}{\sqrt{\log(ep)}}\right), \quad \text{and,}\]
   \[ \ccEstar_{\cG_{\cI_0}^{(s)}}^2 > 16 \{\log(ep)\}^{-1} , \quad \text{and} \quad  \ccEstar_{\cT_{\cI_0}^{(s)}}^2 > 16 \{\log(ep)\}^{-1}\]
   for all $\cI_0 \subset \cS$ and $\abs{\cI_0 } = s-1$.
   % \[
   % \ccEstar_{\cT_{\cI_0}}^2 > 16 \{\log(ep)\}^{-1}.
   % \]
\end{assumption}
\sapta{explain the assumptions}
 Assumption \ref{assumption: anchor point} combined with the observation \eqref{eq: diff of proj} essentially tells that the set of elusive features $\{\widehat{\bu}_j\}_{j \in \cS^c}$ and the scaled spurious signals $\{\widehat{\bgamma}_{\cD}\}_{\cD\in \ccC_{j_0}}$  are not too identical with each other, as $\ccE^*_{\cG_{\cI_0}^{(s)}}$ and $\ccE^*_{\cT_{\cI_0}^{(s)}}$ would be typically small otherwise. Thus, Assumption \ref{assumption: anchor point} induces diversity in $\cT_{\cI_0}^{(s)}$ and $\cG_{\cI_0}^{(s)}$. 

\begin{condition}
\label{cond: alpha-regularity}
There exists a constant $\alpha \in (0,1)$ such that $\ccEstar_{\cT_{\cI_0}^{(s)}}/ \ccE_{\cT_{\cI_0}^{(s)}} \in (\alpha, 1)$. 
%for all $\cI_0 \subset \cS$ with $\abs{\cI_0} = s-1$.
\end{condition}
The condition
% \[
% \frac{\ccEstar_{\cT_{\cI_0}}}{\ccE_{\cT_{\cI_0}}} \in (\alpha, 1),
% \]
essentially tells that the set $\cT_{\cI_0}^{(s)}$ has a somewhat regular geometric shape in the sense that both the lower and upper complexity are of the same order. This essentially implies that minimal separation and maximal separation of the set $\cT_{\cI_0}^{(s)}$ are of the same order. \sapta{This is true for most of the random design models}.
% Lastly, Assumption \ref{assumption: anchor point}\ref{item: cH_I big} tells that complexity $\ccE^*_{\cT_{\cI_0}}$ is not too small. Essentially, this shows that a set of residualized signals has to be well separated from each other. The term $\sfd_{\cG_{\cI_0}}$ can be thought of as a penalty term in the following sense:
% If the spurious features are highly correlated (i.e. $\sfd_{\cG_{\cI_0}}$ is small) the residualized signals have to be more separated compared to the case when the features are weakly correlated. 
% Basically, Assumption \ref{assumption: packing condition} says that there are $\Omega(p^{\ccR_{\cG_1} \sfD_{\cG_1}})$ spurious variables that are not too correlated. Violating this condition means that all the spurious variables are highly correlated with each other, in which case the lower bound \eqref{eq: margin cond} may not be necessary for model consistency. 

%\sapta{Clarify $C_0$ and $\ccR_\cG$}
Now we present our theorem on the necessary condition for model consistency of BSS.
% \begin{theorem}[Necessity]
% \label{thm: necessary condition BSS}
% Consider the model \eqref{eq: base model}. Also, assume that there exists a positive constant $c_\cT$, such that 

% \begin{equation}
% \label{eq: tau_k}
% \widehat{\tau}_{j_0} < \left\{\sqrt{4 (4\sqrt{2}\ccE_{\cT_{\cI_0}} + c_\cT)^2  + \ccR_{\cG_{\cI_0}}} - 2 (4\sqrt{2}\ccE_{\cT_{\cI_0}} + c_\cT) \right\}^2 \frac{\sigma^2  \log \left(p \right) }{n}.
% \end{equation}

% % \[
% % b_1 =  4  (8 \sfD_{\cT_1} + c_\cT) \sigma \sqrt{\frac{\log(p)}{n}},
% % \]
% % \[
% % a_1 =  \ccR_{\cG_1} \sfD_{\cG_1} \frac{\sigma^2  \log \left(p \right) }{32n} . %\left(\frac{\sigma^2 \widetilde{\sfd}_{\cG_k}^2}{4n} \left\{  \log \left(\frac{p}{s} -1 \right)\right\}  - C_0 \delta \sfD_{\cG_k} \sigma^2 \frac{ \log(ep)}{n} -\frac{64 \sqrt{2}  c_\cG \sfd_{\cG_k}^\circ \sigma^2  \log(ep)}{n}\right).
% %\]
%  Then, BSS can not achieve model consistency, i.e., there exists a universal positive constant $C_0$ such that
% \[
% \pr (\widehat{\cS}_{\rm best} \neq \cS) \geq 1 - 2 (ep)^{-c_{\cT}^2/2} - 2(ep)^{- \frac{\ccR_{\cG_{\cI_0}}}{2 C_0}}.
% \]
%\end{theorem}
% The detailed proof is deferred to Appendix \ref{sec: ncessary condition BSS}.

\begin{theorem}[Necessity]\sapta{correct the statement and notations}
    \label{thm: necessary condition BSS}
    Assume $\bvarepsilon\sim \sfN(0, \sigma^2 \bbI_n)$, $p > 16 e^3$ and $s<p/2$. Also, let the Assumption \ref{assumption: anchor point} hold and write $\cJ = \{\cI \subset \cS : \abs{\cI} = s-1\}$.
%     Also assume that there exists a constant $\alpha \in (0,1)$ such that $\ccEstar_{\cT_{\cI_0}}/ \ccE_{\cT_{\cI_0}} \in (\alpha, 1)$, then there exists a positive constant $C_\alpha$  such that the condition 
%     \[
% \widehat{\tau}_{j_0} \leq  C_\alpha \max\{\ccE_{\cT_{\cI_0}}^{*2}, \ccE_{\cG_{\cI_0}}^{*2} \} \frac{\sigma^2 \log(ep)}{n}
% \]
% implies that
%  \[
% \pr(\widehat{\cS}_{\rm best} \neq \cS) \geq \frac{1}{10}.
%     \]
Then the followings are true:
  \begin{enumerate}[label=(\alph*)]
    \item If $\ccEstar_{\cG
_{\cI_0}^{(s)}} \notin  (\ccEstar_{\cT_{\cI_0}^{(s)}}, \ccE_{\cT_{\cI_0}^{(s)}})$ for all $\cI_0 \in \cJ$, then there exists a universal constant $C_1>0$ such that
    
    \[
\widehat{\tau}(s) \leq C_1 \max\left\{\max_{\cI_0 \in \cJ}\ccEstar_{\cT_{\cI_0}^{(s)}}^2, \max_{\cI_0 \in \cJ}\ccEstar_{\cG_{\cI_0}^{(s)}}^2 \right\} \frac{\sigma^2 \log(ep)}{n}
\]
implies that 
 \[
\pr(\widehat{\cS}_{\rm best}(s) \neq \cS) \geq \frac{1}{10}.
    \]
    
% \item If $\ccEstar_{\cG
% _{\cI_0}} \in  [0,1] \cap (\ccEstar_{\cT_{\cI_0}}, \ccE_{\cT_{\cI_0}})$ then the condition
% \[
% \widehat{\tau}_{j_0} \leq C_2 \max\left\{ \ccEstar_{\cT_{\cI_0}}^2, \left( \sqrt{200 \ccE_{\cT_{\cI_0}}^2 + \ccEstar_{\cG_{\cI_0}}^2} - 10\sqrt{2} \ccE_{\cT_{\cI_0}}\right)^2 \right\}\frac{\sigma^2 \log(ep)}{n}
% \]
% implies that 
%  \[
% \pr(\widehat{\cS}_{\rm best} \neq \cS) \geq \frac{1}{10}.
%     \]

\item If there exists $\cI_\# \in \cJ$ such that $\ccEstar_{\cG
_{\cI_\#}^{(s)}} \in  (\ccEstar_{\cT_{\cI_\#}^{(s)
}}, \ccE_{\cT_{\cI_\#}^{(s)}})$, then under Condition \ref{cond: alpha-regularity}, there exists a constant $C_\alpha$ depending on $\alpha$, such that 

\[
\widehat{\tau}(s) \leq C_\alpha \max\left\{\max_{\cI_0 \in \cJ}\ccEstar_{\cT_{\cI_0}^{(s)}}^2, \max_{\cI_0 \in \cJ}\ccEstar_{\cG_{\cI_0}^{(s)}}^2 \right\} \frac{\sigma^2 \log(ep)}{n}
\]
implies that
 \[
\pr(\widehat{\cS}_{\rm best}(s) \neq \cS) \geq \frac{1}{10}.
    \]
    \end{enumerate}   

\end{theorem}
The detailed proof can be found in Section S1.4 of the supplementary material.
The above theorem essentially says that if the maximum leave-one-out margin $\widehat{\tau}(s) \lesssim \sigma^2 (\log p)/n$ then the BSS fails to achieve model consistency with positive probability. However, the interesting part of the above theorem is to understand the effect of the term involving complexity measures. Similar to Theorem \ref{thm: sufficiency of BSS}, here also we see that the dominating complexity characterizes the necessary condition for model consistency.
However, we reiterate a few major differences between the above theorem and Theorem \ref{thm: sufficiency of BSS}. First, Theorem \ref{thm: sufficiency of BSS} needs $\tau_*(s)$ to be lower bounded, which is much stronger than the required condition on $\widehat{\tau}(s)$ in Theorem \ref{thm: necessary condition BSS}. Second, Theorem \ref{thm: necessary condition BSS} involves the alternative complexity measures $\ccEstar_{\cT_{\cI_0}^{(s)}}$ and $\ccEstar_{\cG_{\cI_0}^{(s)}}$, which are typically smaller than the complexity measures used in Theorem \ref{thm: sufficiency of BSS}. Third, the resulting complexity in Theorem \ref{thm: sufficiency of BSS} involves the maximum over all possible subsets of $\cS$, whereas Theorem \ref{thm: necessary condition BSS} involves the maximum only over the subsets of $\cS$ of size $s-1$. These three facts are the main reasons that the requirement in Theorem \ref{thm: necessary condition BSS} is weaker compared to the margin condition \eqref{eq: margin cond}. Nonetheless, Theorem \ref{thm: necessary condition BSS} is still interesting as it shows that the two types of complexities are indeed important quantities to understand the model selection performance of BSS.
Theorem \ref{thm: necessary condition BSS} can also be stated in terms of the diameter and minimum separability of the sets $\cT_{\cI_0}^{(s)}$ and $\cG_{\cI_0}^{(s)}$. Recall that $\ccEstar_{\cT_{\cI_0}^{(s)}} \gtrsim \sfd_{\cT_{\cI_0}^{(s)}}$ and $\ccEstar_{\cG_{\cI_0}^{(s)}} \gtrsim \sfd_{\cG_{\cI_0}^{(s)}}$. Hence, it follows that under the same conditions in Theorem \ref{thm: necessary condition BSS}, the margin condition
\[
\widehat{\tau}(s) \gtrsim \max \left\{\max_{\cI_0 \in \cJ}\sfd_{\cT_{\cI_0}^{(s)}}^2, \max_{\cI_0 \in \cJ}\sfd_{\cG_{\cI_0}^{(s)}}^2 \right\} \frac{\sigma^2 \log(ep)}{n}
\]
is necessary for model consistency of BSS.

\section{Experiments}
In this section, we will compare the performance of BSS with that of LASSO, one of the most popular tools for model selection. In these experiments, we set $p = 2000, s = 10$ and $n = 500$. We construct the design matrix $\bX$ by sampling each row $\bx_i \sim N(0, \bSigma)$ for $i \in [n]$, where
\[
\bSigma = \begin{bmatrix}
    (1-r_t) \bbI_s + r_t \bone_s \bone_s^\top & \boldsymbol{0}_{s \times (p-s)}\\
    \boldsymbol{0}_{(p-s)\times s} &  (1-r_s) \bbI_{p-s} + r_s \bone_{p-s} \bone_{p-s}^\top
\end{bmatrix}
\]
and $r_t, r_s \in [0,1]$. We set $\bbeta \in \bbR^p$ so that $\beta_j = \ind\{j \le s\}$ for all $j \in [p]$. The responses $\{y_i\}_{i \in [n]}$ are generated according to model \eqref{eq: base model} with $\varepsilon_i \sim \sfN(0,1)$. For the experiments, we vary $r_t\in \{0.0, 0.5, 0.9\}$ and $r_s \in \{0.0, 0.1, \ldots, 0.9\}$. 

We use ABESS \citep{zhu2022abess} as a computational surrogate for BSS, and we also provide the knowledge of $s$ to the algorithm. For LASSO, we choose the penalty parameter by five-fold cross-validation and then choose the $s$ coordinates with the highest absolute values of the estimated LASSO coefficient, i.e., we perform hard thresholding (HT) operation on the LASSO estimator. In Figure \ref{fig: bss vs lasso}, we plot the exact recovery rates of BSS and LASSO + HT for varying choices of $r_t$ and $r_s$.

\begin{figure}
     \centering
     \begin{subfigure}[b]{0.32\textwidth}
         \centering         \includegraphics[width = \textwidth]{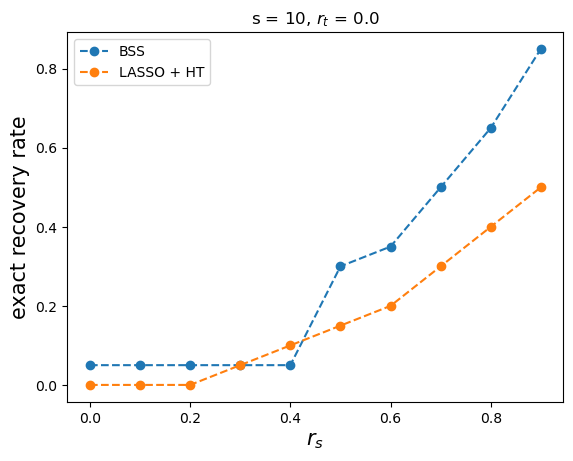}
         \caption{$r_t = 0.0$}
         %label{fig: regions}
     \end{subfigure}
     \hfill
     \begin{subfigure}[b]{0.32\textwidth}
         \centering         \includegraphics[width = \textwidth]{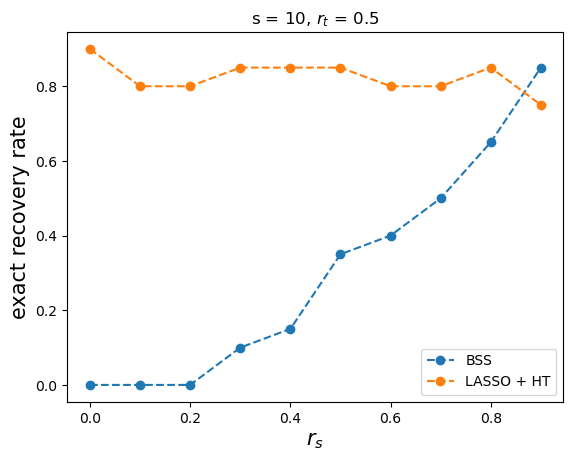}
         \caption{$r_t = 0.5$}
         %\label{fig: Equi-corr}
     \end{subfigure}
     \hfill
     \begin{subfigure}[b]{0.32\textwidth}
         \centering         \includegraphics[width = \textwidth]{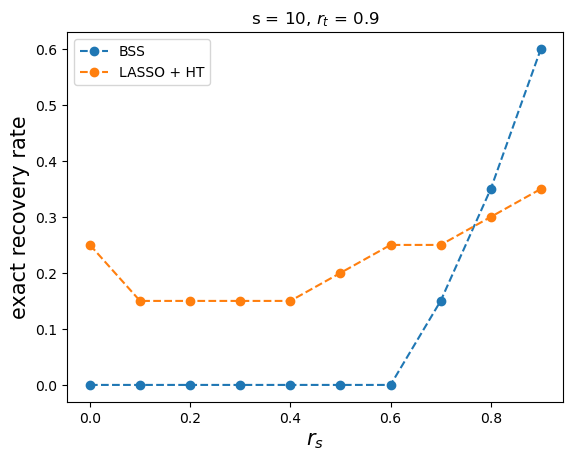}
         \caption{$r_t = 0.9$}
         %\label{fig: Equi-corr}
     \end{subfigure}
        \caption{Exact recovery rate of BSS and LASSO + HT for varying choices of $r_t$ and $r_s$.}
        \label{fig: bss vs lasso}
\end{figure}

In all the cases in Figure \ref{fig: bss vs lasso}, we see that the performance of BSS improves as $r_s$ increases to 1. This is in fact consistent with the theoretical results in Theorem \ref{thm: sufficiency of BSS} as the overall complexity of the spurious signals is likely smaller for high values of $r_s$, and thus the margin condition is easier to satisfy. In this case, complexity of residualized signals are somewhat unaffected as the cross-correlation between true and spurious signals is 0. However, the performance of LASSO+ HT does not seem to exhibit any particular behavior across different correlation structure. For $r_t = 0$, performance of LASSO + HT, although comparatively worse than BSS, is similar in terms of the trend. However, for $r_t = 0.5$, LASSO + HT is consistently better and somewhat stable. For $r_t = 0.9$, LASSO + HT is generally better than BSS and exact recovery rate increases for $r_s\ge 0.4$. However, its performance is much worse compared to the $r_t = 0.5$ case. These observations indicate that similar complexity theory for LASSO is in fact potentially challenging and evidently requires more future research. 
\section{Conclusion}
In this paper, we establish the sufficient and (nearly) necessary conditions for BSS to achieve model consistency in a high-dimensional linear regression setup. Apart from the identifiability margin, we show that the geometric complexity of the residualized signals and spurious projections based on the entropy number and packing numbers also play a crucial role in characterizing the margin condition for model consistency of BSS. In particular, we establish that the dominating complexity among the two plays a decisive role in the margin condition. We also highlight the variation in these complexity measures under different correlation strengths between the features through some simple illustrative examples. Moreover, in the supplementary material, we extend the results in Theorem \ref{thm: sufficiency of BSS} to the high-dimensional sparse generalized linear models (Section S2). However, it is an open problem to find the analogs of the two complexities in more general settings, e.g., the low-rank matrix regression problem or multi-tasking regression problem.

% \begin{acks}[Acknowledgments]
% \end{acks}

%%%%%%%%%%%%%%%%%%%%%%%%%%%%%%%%%%%%%%%%%%%%%%
%% Supplementary Material, if any, should   %%
%% be provided in {supplement} environment  %%
%% with title and short description.        %%
%%%%%%%%%%%%%%%%%%%%%%%%%%%%%%%%%%%%%%%%%%%%%%
%\begin{supplement}
%\stitle{???}
\paragraph{Supplement:}The supplementary material
contains the extension of Theorem \ref{thm: sufficiency of BSS} to the generalized linear model and the proofs of the main results.
%\end{supplement}

%% if your bibliography is in bibtex format, uncomment commands:
\bibliographystyle{apalike} % Style BST file (imsart-number.bst or imsart-nameyear.bst)
\bibliography{bj-ref}       % Bibliography file (usually '*.bib')

\setcounter{section}{0}
\setcounter{equation}{0}
\setcounter{theorem}{0}
\setcounter{lemma}{0}
\setcounter{assumption}{0}
\def\theequation{S\arabic{section}.\arabic{equation}}
\def\thesection{S\arabic{section}}\def\thetheorem{S\arabic{section}.\arabic{theorem}}
\def\thelemma{S\arabic{section}.\arabic{lemma}}
\def\theassumption{S\arabic{section}.\arabic{assumption}}

%\fontsize{12}{14pt plus.8pt minus .6pt}\selectfont

%\begin{appendices}
%\include{glm}
\section*{Supplementary sections}
\section{Proof of main results under linear model}

\subsection{Proof of Lemma 1}
First note that $\bbeta^\top_{\cS \setminus \cD} \Gamma(\cD) \bbeta_{\cS \setminus \cD} = 0 \Leftrightarrow (\bbI_n - \bP_\cD) \bX_{\cS \setminus \cD} \bbeta_{\cS \setminus \cD} = 0$. This shows that $\bX_{\cS \setminus \cD} \bbeta_{\cS \setminus \cD} \in \col(\bX_\cD)$. Thus, we have $\bX_\cS \bbeta_\cS = \bX_{\cS \setminus \cD} \bbeta_{\cS \setminus \cD} + \bX_{\cS \cap \cD} \bbeta_{\cS \cap \cD} \in \col(\bX_\cD)$. This finishes the proof.

\subsection{Proof of Proposition 1}
\label{sec: src is stronger assumption}
 In this section, we will show that the SRC condition (5) is strictly stronger than the condition in Assumption 1. Recall that the features are normalized, i.e., 
$\norm{\bX_j}_2 = \sqrt{n}$ for all $j \in [p]$. Now, we will prove the proposition.

% \begin{lemma}
%     Assume that there exist positive constants $\kappa_-, \kappa_+$ such that the SRC condition \eqref{eq: SRC condition} holds with $\Psi = 2$. Then the condition in Assumption \ref{assumption: noisy features are not too correlated} also holds. Furthermore, the implication in the other direction is not true in general.
%     \end{lemma}

    \begin{proof}
        \subsubsection*{SRC implies Assumption 1:}
        For a set $\cI \subset \cS$,
        define  $\ccA_\cI:= \{\cD \in \ccA_s: \cS \cap \cD = \cI\}$. Now recall that $\ccE_{\cG_\cI^{(s)}} \gtrsim \sfd_{\cG_\cI^{(s)}}$ for large $p$. Thus, it suffices to show that $\sfd_{\cG_\cI^{(s)}}$ is large for all choices of $\cI\subset \cS$.  Let $\cD_1, \cD_2\in \ccA_\cI$      and write $\cM = \cD_1 \cap \cD_2$. Let $m = \abs{\cM}$ and consider the two subspaces $L_1 = \col(\bX_{\cD_1}) \cap \col(\bX_{\cM})^\perp$ and $L_2 = \col(\bX_{\cD_2}) \cap \col(\bX_{\cM})^\perp$. 
        Let $\{\bxi_j\}_{j=1}^m$ be an orthonormal basis of $\cM$. 
        Let $\{\balpha_j\}_{j=1}^{s-m}$ be an orthonormal basis of $L_1$ and  $\{\bdelta_j\}_{j=1}^{s-m}$ be the orthonormal basis of $L_2$ such that 
        \[
        \theta_j := \angle(\balpha_j, \bdelta_j), \quad j \in [k],
        \]
        are the principal angles between $L_1$ and $L_2$ in decreasing order. Now, we construct the matrix $\bZ$ in the following way:
        \[
        \bZ = \left[\bX_{\cD_1\setminus \cD_2}\mid \bX_{\cM}\mid \bX_{\cD_2 \setminus \cD_1} \right].
        \]
        There exists matrix $\bu, \bv \in \bbR^{s-m}$ and $\bw_1, \bw_2 \in \bbR^{m}$ such that 
        \[\balpha_1= \bX_{\cD_1\setminus \cD_2}\bu + \bX_\cM \bw_1 \quad \text{and} \quad \bdelta_1= \bX_{\cD_2\setminus \cD_1}\bv + \bX_\cM \bw_2.
        \]
        As $\balpha_1 \perp \col(\bX_\cM)$, we have 
        \[
        1 = \balpha_1^\top \balpha_1 = \balpha_1^\top \bX_{\cD_1 \setminus \cD_2}\bu \leq \sqrt{n\kappa_+} \norm{\bu}_2 \Rightarrow \norm{\bu}_2^2 \geq 1/(n\kappa_+).
        \]
        By a similar argument, we have $\norm{\bv}_2^2 \geq 1/(n\kappa_+)$. Define the vectors $\bfeta := (\bu^\top, (\bw_1 - \bw_2)^\top, \bv^\top)^\top$. Hence, $\norm{\bfeta}_2^2 \geq \norm{\bu}_2^2 + \norm{\bv}_2^2 \geq 2/(n\kappa_+)$. Due to SRC condition (4.7), we have
        \begin{equation}
        \label{eq: norm lower bound}
        \norm{\bZ \bfeta}_2^2 \geq  n \norm{\bfeta}_2^2 \kappa_- \geq 2(\kappa_-/\kappa_+).
        \end{equation}

        \begin{equation}
        \label{eq: norm upper bound}
        \begin{aligned}
            \norm{\bZ \bfeta}_2^2 & = \norm{\balpha_1 - \bdelta_1}_2^2 \\
            & = 
            2(1 - \sqrt{1 - \sin^2\theta_1})\\
            & \leq 2 \sin 
            \theta_1 ,
        \end{aligned}
        \end{equation}
where the last inequality follows from the fact that $1-x \leq \sqrt{1 - x^2}$ for all $x \in [0,1]$.
Combining \eqref{eq: norm lower bound} and \eqref{eq: norm upper bound}, we have 
\[
\norm{\bP_{\cD_1} - \bP_{\cD_2}}_\op \geq \frac{\kappa_-}{\kappa_+}.
\]
 The above display shows that $\sfd_{\cG_\cI^{(s)}} \gtrsim (\kappa_-/\kappa_+)\gg \{\log(ep)\}^{-1/2}$ for all $\cI \subset \cS$.  Hence, the claim follows.      

        \subsubsection*{Assumption 1 does not imply SRC:}
        In this case, assume $\cS= \{1\}$ and $\be_j$ be the $j$th canonical basis in $\bbR^p$. Under this setup, Assumption 1 becomes 
        \begin{equation}
        \label{eq: condition}
        \ccE_{\cG_\emptyset^{(1)}} > \{\log(ep)\}^{-1/2}.
        \end{equation}
        Now assume that $$   \frac{2}{\log(ep)} \leq \frac{\norm{\bX_j - \bX_
{j^\prime}}_2^2}{n} \leq \frac{3}{\log(ep)}, \quad \text{for all $j, j^\prime \in [p]$}. $$
Then, for large $p$, the condition in \eqref{eq: condition} holds but SRC fails with the choice of $\bv = (\be_j - \be_{j^\prime})/\sqrt{2}$.
\end{proof}

\subsection{Proof of Theorem 1}
\label{sec: sufficiency of BSS}
%\subsection*{Retaining complexitites}
Recall that $\bmu = \bX_\cS \bbeta_\cS$ , $\bgamma_\cD = n^{-1/2}(\bbI_n - \bP_\cD) \bmu$ and 
\begin{equation}
\label{eq: Schur comp}
    \Gamma(\cD) =  \widehat{\bSigma}_{ \cS \setminus \cD, \cS \setminus \cD} -  \widehat{\bSigma}_{ \cS \setminus \cD, \cD}\widehat{\bSigma}_{\cD, \cD}^{-1} \widehat{\bSigma}_{\cD, \cS \setminus \cD}.
\end{equation}
Note that for $\cD \in \ccA_{s,k}$ and $0 \leq \eta <1$ we have the following:
\begin{equation}
\label{eq: RSS diff}
    \begin{aligned}
    & n^{-1} (R_\cD - R_\cS) = n^{-1}\{ \by^\top (\bbI_n - \bP_\cD ) \by - \by^\top (\bbI_n - \bP_\cS) \by\}\\
    &= n^{-1} \left\{ (\bX_{\cS \setminus \cD} \bbeta_{\cS \setminus \cD} + \bvarepsilon)^\top (\bbI_n - \bP_{\cD}) (\bX_{\cS \setminus \cD} \bbeta_{\cS \setminus \cD} + \bvarepsilon) -  \bvarepsilon^\top (\bbI_n - \bP_\cS) \bvarepsilon \right\}\\
    & = \eta \bbeta_{\cS\setminus \cD}^\top \Gamma(\cD) \bbeta_{\cS\setminus \cD} +  2^{-1}(1- \eta)  \bbeta_{\cS\setminus \cD}^\top \Gamma(\cD) \bbeta_{\cS\setminus \cD} - 2 \left\{n^{-1} (\bbI_n - \bP_{\cD})\bX_{\cS\setminus \cD}\bbeta_{\cS\setminus \cD}\right\}^\top (-\bvarepsilon)\\
    &\quad  + 2^{-1}(1-\eta) \bbeta_{\cS\setminus \cD}^\top \Gamma(\cD) \bbeta_{\cS\setminus \cD} - n^{-1} \bvarepsilon^\top(\bP_\cD - \bP_\cS) \bvarepsilon.
    \end{aligned}
\end{equation}
Also, let $\widetilde{\bvarepsilon}:= (-\bvarepsilon)$. Now, $\cE$ be an event under which the following happens:

\[
\left\{
 \widehat{\cS}: \vert\widehat{\cS}\vert = s, \min_{\cS \in \ccA_s } R_{\widehat{\cS}} \leq R_\cS + n \eta \tau_*(s) 
 \right\} = \{\cS\}.
\]
% Now, note that BSS chooses the correct model if 
% $$
% \min_{\cD \in \ccA_s} n^{-1}(R_\cD - R_\cS) > 0.
% $$
Define the set $\ccA_{\cI}: = \{\cD \in \ccA_s: \cS \cap \cD = \cI\}$. We also set $\abs{\cI} = s - k$ for $k\in [s]$. Then we have $\ccA_{\cI} \subset \ccA_{s,k}$.
By union bound we have the following:

\begin{equation}
\label{eq: error prob BSS}
 \pr (\cE^c) \leq \sum_{k =1}^s \sum_{\cI \subset \cS : \abs{\cI} = s-k} \pr \left\{\min_{\cD \in \ccA_{\cI}} n^{-1} (R_\cD - R_\cS) < \eta \tau_*(s) \right\}. 
\end{equation}

Thus,  under the light of equation \eqref{eq: RSS diff} it is sufficient to show the following with high probability:

\begin{equation}
\label{eq: quant 1}
\max_{\cD \in \ccA_{\cI}} \widehat{\bgamma}_\cD^\top \widetilde{\bvarepsilon} < \frac{n^{1/2}(1-\eta)}{4} \min_{\cD\in \ccA_{\cI}} \norm{\bgamma_\cD}_2,
\end{equation}

\begin{equation}
    \label{eq: quant 2}
    \max_{\cD\in \ccA_{\cI}} n^{-1} \left\{ \bvarepsilon^\top (\bP_\cD - \bP_\cS) \bvarepsilon \right\} < \frac{1- \eta }{2} \min_{\cD\in \ccA_{\cI}}\norm{\bgamma_\cD}_2^2,
\end{equation}
for every $k \in [s]$. We will analyze the above events separately. We recall the two important sets below:
\[
\cT_{\cI}^{(s)}:= \{ \widehat{\bgamma}_\cD  : \cD \in \ccA_{s} , \cD \cap \cS = \cI \}, \; \text{and} \;\; \cG_{\cI}^{(s)} := \{ \bP_\cD - \bP_{\cI}  : \cD \in \ccA_{s}, \cD \cap \cS = \cI \}.
\]

% \[
% \cG_{\cI}^{(s)} := \{ \bP_\cD - \bP_{\cI}  : \cD \in \ccA_{s}, \cD \cap \cS = \cI \}.
% \]
To reduce notational cluttering, we will drop the $s$ in the superscript, and use $\cT_\cI$ and $\cG_\cI$ to denote the above sets.
% Define the following bracketing integrals:

% \[
% \ccE_{\cG_{\cI}} > \{\log(ep)\}^{-1/2}\; \forall\; \cI \subset \cS
% \sapta{Assumption}
% \]

% Finally define the set $\ccA_{\cI}: = \{\cD \in \ccA_s: \cS \cap \cD = \cI\}$. We also set $\abs{\cI} = s - k$ for $k\in [s]$. Thus, $\ccA_{\cI} \subset \ccA_{s,k}$.

\paragraph{Linear term:} 
% We begin with analyzing the supremum process in Equation \eqref{eq: quant 1}. First note the following:
% \[
% \max_{\cD\in \ccA_{s
% }} \widehat{\bgamma}_{\cD}^\top \widetilde{E} = \max_{\cD\in \ccA_{s,k}} (\widehat{\bgamma}_{\cD} - \widehat{\bgamma}_{\cD_{0,k}})^\top \widetilde{E} + \widehat{\bgamma}_{\cD_{0,k}}^\top \widetilde{E},
% \]
% where $\cD_{0,k} \in \ccA_{s,k} $ is a fixed subset such that $\norm{P_{\cD_{0,k}} - P_\cS}_{\op} = \sfd_{\cG_k}^\circ$.
Let $f_\cD : = \widehat{\bgamma}_{\cD}^\top \widetilde{\bvarepsilon} $ and $\norm{f} := \max_{\cD\in \ccA_{\cI}} f_\cD$. Since $\sfD_{\cT_\cI} \leq \sqrt{2}$,
% By Borell-TIS inequality \cite[Theorem 2.1.1]{adler2007random}
 Theorem 5.36 of \cite{wainwright2019high}tells that there exists a constant $A_1>0$ such that  
\begin{equation}
\label{eq: borell-TIS.com}
\pr \left\{ \norm{f} \geq A_1\sigma (\ccE_{\cT_\cI}\sqrt{k \log(ep)}+ u) \right\} \leq 3\exp \left(- \frac{u^2}{2 }\right),
\end{equation}
for all $u>0$. 
% Also, by Theorem 1 of Section 14 in \cite{lifshits1995gaussian} we have
% \[
% \bbE(\norm{f}) \leq 4\sqrt{2} \sigma \int_{0}^\infty \sqrt{\log \cN(\varepsilon, \cT_{\cI}, \norm{\cdot}_2)} \; \d\varepsilon = 4\sqrt{2} \sigma \ccE_{\cT_{\cI}} \sqrt{k\log(ep)}.
% \]
Setting $u = 2c_\cT\sqrt{ k  \log(ep)}$ in Equation \eqref{eq: borell-TIS.com} we get 
\begin{equation}
\label{eq: linear deviation.com}
 \pr (\norm{f} \geq A_1 \sigma \ccE_{\cT_{\cI}} \sqrt{k\log(ep)}+  2 A_1 c_\cT \sigma \sqrt{ k \log(ep)}) \leq 3(ep)^{-2c_\cT^2 k}.
\end{equation}

% Finally note that $\widehat{\bgamma}_{\cD_{0,k}}^\top \widetilde{E} \sim N(0,1)$. Hence, $\pr(\widehat{\bgamma}_{\cD_0}^\top \widetilde{E} \geq c_\cT \sigma \sqrt{  k \log(ep)}) \leq (ep)^{- c_\cT^2 k/2}$, where $c_\cT>0$ is an arbitrary constant. 
Writing $A_1$ as $c_1$, we get

\begin{equation}
\label{eq: linear deviation 2.com}
    \pr \left\{\max_{\cD \in \ccA_{\cI}} \widehat{\bgamma}_{\cD}^\top \widetilde{\bvarepsilon} \geq c_1 (\ccE_{\cT_{\cI}} +
    2 c_\cT) \sigma \sqrt{k \log(ep) }  \right\} \leq  3(ep)^{-2 c_\cT^2 k}.
\end{equation}

\paragraph{Quadratic term:}
Here we study the quadratic supremum process in Equation \eqref{eq: quant 2}. First, define the two projection operators $\bP_{\cD \mid \cI} = \bP_\cD - \bP_{\cI}$ and $\bP_{\cS \mid \cI}: = \bP_\cS - \bP_{\cI}$. For any number $c_\cG\in(\{\log(ep)\}^{-1},1)$, by union bound we have,
\begin{equation}
    \label{eq: quad decomposition}
    \begin{aligned}
    &\pr \left\{ n^{-1}\max_{\cD \in \ccA_{\cI}} \bvarepsilon^\top (\bP_\cD - \bP_\cS)\bvarepsilon > \sigma^2 u + \sigma^2 c_\cG u_0\right\}\\
    & \pr \left\{ n^{-1}\max_{\cD \in \ccA_{\cI}} \bvarepsilon^\top (\bP_{\cD \mid \cI} - \bP_{\cS\mid \cI})\bvarepsilon > \sigma^2 u + \sigma^2 c_\cG u_0\right\}\\
    & \leq \pr \left\{ n^{-1} (k\sigma^2 - \bvarepsilon^\top  \bP_{\cS\mid \cI} \bvarepsilon) > \sigma^2 u_0 c_\cG \right\} +  \pr \left\{ n^{-1} \max_{\cD \in \ccA_{\cI}}(\bvarepsilon^\top \bP_{\cD\mid \cI} \bvarepsilon - k\sigma^2) > \sigma^2 u \right\}.
    \end{aligned}
\end{equation}
Also, note that $\bbE(\bvarepsilon^\top \bP_{\cD \mid \cI}\bvarepsilon) \leq k\sigma^2$ and recall that $\ccE_{\cG_{\cI}} >  \{\log(ep)\}^{-1/2}$ for all $\cI \subset \cS$. This shows that $\sqrt{k}\leq \ccE_{\cG_\cI} \sqrt{k \log(ep)}$. Furthermore, 
 by the properties of projection matrices, we have $d_\op (\cG_\cI) =1 $ and $d_F(\cG_\cI) = \sqrt{k}$ (defined in Section \ref{appendix: quadratic chaos}). Also, it follows that the quantities $M, V$ and $U$ (defined in Theorem \ref{thm: order-2 chaos deviation bound}) have the following properties:
\[
M \leq 2 \ccE_{\cG_\cI}^2 k \log(ep), \quad V \leq 2 \sqrt{k \log (ep)}, \quad \text{and} \quad U =1.
\]
Using these facts and Theorem \ref{thm: order-2 chaos deviation bound}, we get that there exists a universal positive constants $A_2, A_3$, such that  for $t = A_3 c_\cG k \log(ep)$, we get
\begin{equation}
\label{eq: quad-deviation.com}
\pr\left\{\max_{\cD \in \ccA_\cI} \bvarepsilon^\top \bP_{\cD \mid \cI}\bvarepsilon \geq  A_2 \sigma^2( \ccE_{\cG_\cI}^2 + c_\cG)  k \log(ep)  \right\} \leq (ep)^{-2 c_\cG^2 k}.
\end{equation}
Due to Theorem 1.1 of \cite{rudelson2013hanson}, setting $u_0 =  k\log (ep)/(2n)$ we can show that there exists an absolute constant $A_4>0$ such that
\begin{equation}
    \label{eq: quad deviation 4.com}
    \begin{aligned}
     \pr \left\{ n^{-1} \abs{\bvarepsilon^\top \bP_{\cS\mid \cI} \bvarepsilon - k\sigma^2} > \frac{ c_\cG \sigma^2 k \log(ep)}{2n}\right\}
    & \leq 2 \exp \left\{-  A_4 c_\cG k \log(ep) \right\}\\
    & = 2 (ep)^{- A_4 c_\cG k},
    \end{aligned}
\end{equation}
 Combining Equation \eqref{eq: quad decomposition}, \eqref{eq: quad-deviation.com}  and Equation \eqref{eq: quad deviation 4.com} yields

\begin{equation}
    \label{eq: quad deviation 5}
    \pr \left\{ n^{-1}\max_{\cD \in \ccA_{\cI}} \bvarepsilon^\top (\bP_\cD - \bP_{\cS})\bvarepsilon > c_2 ( \ccE_{\cG_{\cI}}^2 + c_\cG) \sigma^2\frac{ k \log(ep)}{n}\right\} \leq (ep)^{-2c_\cG^2 k} + 2 (ep)^{- A_4 c_\cG k},
\end{equation}
where $c_2$ is a universal constant.
Now, if we have 
\begin{equation}
\begin{aligned}
 \tau_*(s) & \triangleq \min_{\cD\neq \cS}\frac{\bbeta_{\cS\setminus \cD}^\top \Gamma(\cD) \bbeta_{\cS\setminus \cD}}{\abs{\cS \setminus \cD}}\\
 & \geq \frac{64}{(1-\eta)^2} \max\left\{  c_1\max_{ \cI \subset \cS} (\ccE_{\cT_{\cI}} + 2 c_{\cT})^2, c_2\max_{ \cI \subset \cS} ( \ccE_{\cG_{\cI}}^2 +c_\cG) \right\} \frac{\sigma^2 \log(ep)}{n},
 \end{aligned}
 \label{eq: sufficiency condition for BSS}
\end{equation}
it will ensure that \eqref{eq: quant 1} and \eqref{eq: quant 2} hold with high probability. Finally, using \eqref{eq: linear deviation 2.com} and \eqref{eq: quad deviation 5}, we have
\begin{align*}
&\pr (\cE^c)\\
& \leq \sum_{k =1}^s \sum_{\cI \subset \cS : \abs{\cI} = s-k} \pr \left\{\min_{\cD \in \ccA_{\cI}} n^{-1} (R_\cD - R_\cS) < \eta \tau_*(s)\right\}\\
& \lesssim \sum_{k = 1}^s \sum_{\cI \subset \cS: \abs{\cI} = s-k} \left\{(ep)^{-2 c_\cT^2 k} + (ep)^{-2c_\cG^2 k} + (ep)^{- A_4 c_\cG k} \right\}\\
& \lesssim \sum_{k = 1}^s \binom{s}{k} \left\{(ep)^{-2 c_\cT^2 k} + (ep)^{-2c_\cG^2 k} + (ep)^{- A_4 c_\cG k}\right\}\\
& \lesssim  \sum_{k=1}^s (es)^k\left\{(ep)^{-2 c_\cT^2 k} + (ep)^{-2c_\cG^2 k} + (ep)^{- A_4 c_\cG k}\right\}\\
& \lesssim \sum_{k=1}^s \exp \left[-k \{2 c_\cT^2 \log(ep) - \log(es)\} \right] + \exp [- k \{2c_\cG^2 \log(ep) - \log(es)\}]\\
& \quad \quad  + \exp\left[-k \{A_4 c_\cG \log(ep) - \log(es)\}\right].
\end{align*}
Now, setting $c_\cT =\sqrt{\{\log(es) \vee \log\log(ep)\}/ \log(ep)}$ and $c_\cG = (2 \vee A_4^{-1}) c_\cT$ in the above display, and using the identity $(a +b)^2\leq 2(a^2 + b^2)$ we can conclude that the following is sufficient to hold \eqref{eq: sufficiency condition for BSS}:
\[
\tau_*(s)\geq 
\frac{64}{(1-\eta)^2} \max\{8c_1, c_2(2\vee A_4^{-1})\} \left[\max\left\{
\max_{\cI \subset \cS} \ccE_{\cT_\cI}^2, \max_{\cI \subset \cS} \ccE_{\cG_\cI}^2 
\right\}+ c_\cT\right] \frac{\log(ep)}{n}.
\]
and renaming the absolute constant $64  \max\{8c_1, c_2(2\vee A_4^{-1})\}$ as $C_0$.

\subsection{Proof of Theorem 2}
\label{sec: ncessary condition BSS}

\begin{proof}
First, we will show that $\norm{\bgamma_\cD}_2$ has to be well bounded away from 0 for every $\cD \in \cup_{j_0 \in \cS}\ccC_{j_0}$. Again, to reduce notational cluttering, we drop the $s$ in the superscript and use $\cT_{\cI_0}$ and $\cG_{\cI_0}$ to denote the sets of scaled residualized signals and spurious projections respectively. 

\subsubsection*{\underline{Ruling out the case $\min_{\cD \in \cup_{j_0 \in \cS}\ccC_{j_0}}\norm{\bgamma_\cD}_2 \leq  \sigma/\sqrt{n} $ :} }
Let $\min_{\cD \in \cup_{j_0 \in \cS}\ccC_{j_0}}\norm{\bgamma_\cD}_2 \leq  \sigma/\sqrt{n} $, i.e, there exists $\cD \in \ccC_{j_0}$ for some $j_0 \in \cS$ such that $$\norm{\bgamma_\cD}_2 \leq  \sigma/\sqrt{n}.$$
Recall that $\by \sim \sfN(\bX_\cS \bbeta_\cS^*, \sigma^2 \bbI_n)$ and define $\bw := \by - \bX_\cS \bbeta_\cS^*$. Hence, we have 
\begin{align*}
    \pr \left(R_\cD - R_\cS < 0\right)& = \pr \left(\norm{\by - \bP_\cD \bX_\cS \bbeta_\cS^*}_2^2 < \norm{\by - \bX_\cS \bbeta_\cS^*}_2^2 \right)\\
    & = \pr \left( \norm{\bw + n^{1/2}\bgamma_\cD}_2^2 < \norm{\bw}_2^2\right)\\
    & \geq \frac{1}{2} \frac{e^{-0.5}}{\sqrt{2 \pi}} > 0.1 \quad (\text{By Lemma \ref{lemma: similar Gaussians are hard to distinguish}}),
\end{align*}
i.e., $\pr \left(\cS \notin \argmin_{\cD: \abs{\cD} = s}R_\cD\right) $ is strictly bounded away from 0. Hence, BSS can not recover the true model. Hence, we rule out this case.

\subsubsection*{\underline{Under the case $\min_{\cD \in \cup_{j_0 \in \cS}\ccC_{j_0}}\norm{\bgamma_\cD}_2 >  \sigma/\sqrt{n} $ :} }
We fix a $j_0 \in \cS$.
\subsubsection*{\textbf{First decomposition:}}

\begin{equation}
    \label{eq: RSS diff 2nd decomp}
    \begin{aligned}
     \min_{\cD \in \ccC_{j_0}} n^{-1}(R_\cD - R_\cS) &\leq \min_{\cD \in \ccC_{j_0}} \{ \bbeta_{\cS\setminus \cD}^\top \Gamma(\cD) \bbeta_{\cS\setminus \cD} - 2 n^{-1/2} \bgamma_\cD^\top \widetilde{\bvarepsilon} - n^{-1} \bvarepsilon^\top (\bP_\cD - \bP_\cS) \bvarepsilon\}\\
     & \leq  \min_{\cD \in \ccC_{j_0}} \left[\norm{\bgamma_\cD}_2 \left\{ \norm{\bgamma_\cD}_2 - 2n^{-1/2} \widehat{\bgamma}_{\cD}^\top \widetilde{\bvarepsilon}\right\} -n^{-1} \bvarepsilon^\top (\bP_\cD - \bP_{\cI_0}) \bvarepsilon\right]\\
     & \quad + n^{-1} \bvarepsilon^\top (\bP_\cS - \bP_{\cI_0}) \bvarepsilon\\
     & \leq  \min_{\cD \in \ccC_{j_0}} \left[\norm{\bgamma_\cD}_2 \left\{ \norm{\bgamma_\cD}_2 - 2n^{-1/2} \widehat{\bgamma}_{\cD}^\top \widetilde{\bvarepsilon}\right\}\right] \quad + n^{-1} \bvarepsilon^\top (\bP_\cS - \bP_{\cI_0}) \bvarepsilon
    %  \leq \widehat{\tau}_{j_0} + 2 (\widehat{\tau}_{j_0}/n)^{1/2} \max_{\cD \in \ccC_{j_0}} \abs{\widehat{\bgamma}_\cD^\top \widetilde{E}} - n^{-1}\max_{\cD \in \ccC_{j_0}} E^\top (P_\cD - P_\cS) E.
    \end{aligned}
\end{equation}

% {
% \color{red}
% The desired bound depending on $\ccEstar_{\cT_{\cI_0}}$ can be proved if 
% \begin{itemize}
%     \item If we have $\norm{\bgamma_\cD}_2 \geq 1/\sqrt{n}$ for all $\cD\in \ccC_{j_0}$.
%     \item OR, the condition $\sfd_{\cG_{\cI_0}}\geq \{\log(ep)\}^{-1/2}$. 
    
%     \item My intuition:  If $\norm{\bgamma_\cD}_2 \leq c_0/\sqrt{n}$ for some $\cD$, then BSS will get confused between $\cS$ and $\cD$. If this is true, then we do not need to assume the first bullet point as that condition will be necessary. 
% \end{itemize}
% }

We start with the quadratic term in the right hand side of the above display. First note that $\bvarepsilon^\top (\bP_\cS - \bP_{\cI_0}) \bvarepsilon/\sigma^2 = (\widehat{\bu}_{j_0}^\top \bvarepsilon)^2/\sigma^2$ follows a chi-squared distribution with degrees of freedom 1. Hence, Markov's inequality shows that 
\begin{equation}
\label{eq: true proj deviation}
   \pr \left\{ n^{-1} \bvarepsilon^\top (\bP_\cS - \bP_{\cI_0}) \bvarepsilon >\frac{2 \sigma^2}{n} \right\} \leq \frac{1}{2}. 
\end{equation}

% Next, we will focus on the minimum process on the right-hand side of \eqref{eq: RSS diff 2nd decomp}. First, we notice that by definition it follows that $\bgamma_\cS = 0$. Denote the only element of $\cD \setminus \cS$ by $j$. Also we define $\overline{\bu}_j = \widehat{\bu}_j \; \sign(\widehat{\bu}_j ^\top \bmu)$.
% We then have that
% \begin{equation}
% \label{eq: lin diff gamma}
% \begin{aligned}
% \bgamma_\cD  &= \bgamma_\cD - \bgamma_\cS\\
% &= n^{-1/2}(\bP_\cS - \bP_\cD) \bmu \\
% & = n^{-1/2} (\widehat{\bu}_{j_0} \widehat{\bu}_{j_0}^\top - \widehat{\bu}_j \widehat{\bu}_j^\top )\bmu\\
% & = n^{-1/2} (\overline{\bu}_{j_0} \overline{\bu}_{j_0}^\top - \overline{\bu}_j \overline{\bu}_j^\top )\bmu,
% \end{aligned}
% \end{equation}
% and that
% \begin{equation}
% \label{eq: squre diff gamma}
%     \begin{aligned}
%      \norm{\bgamma_\cD}_2^2 - \norm{\bgamma_\cS}_2^2 &= n^{-1} \left\{(\widehat{\bu}_{j_0}^\top \bmu)^2 - (\widehat{\bu}_j^\top \bmu)^2 \right\}\\
%      & = n^{-1} \left( \abs{\widehat{\bu}_{j_0}^\top \bmu} - \abs{\widehat{\bu}_{j}^\top \bmu}\right)  \left( \abs{\widehat{\bu}_{j_0}^\top \bmu} + \abs{\widehat{\bu}_{j}^\top \bmu}\right)\\
%      & \leq \frac{2}{n} \lambda_0 \abs{\widehat{\bu}_{j_0}^\top \bmu},
%     \end{aligned}
% \end{equation}
Recall that
\[
\norm{\bgamma_\cD}_2 \geq \frac{ \sigma}{\sqrt{n}}, \quad \text{for all $\cD\in \ccC_{j_0}$}.
\]

Next, by Sudakov's lower bound, we have 
\[
\bbE \left(\max_{\cD \in \ccC_{j_0}} \widehat{\bgamma}_\cD^\top \widetilde{\bvarepsilon}\right) \geq \sigma \ccEstar_{\cT_{\cI_0}} \sqrt{\log(p-s)} \geq \frac{\ccEstar_{\cT_{\cI_0}}}{\sqrt{2}} \sqrt{\log(ep)}.
\]
The last inequality uses the fact that $s<p/2$ and $p >16 e^3$.
Again, an application of Borell-TIS inequality yields

\begin{equation}
    \label{eq: linear term lower bound}
    \pr \left\{ \max_{\cD \in \ccC_{j_0}} \widehat{\bgamma}_{\cD}^\top \widetilde{\bvarepsilon} \geq \sigma \ccEstar_{\cT_{\cI_0}} \sqrt{\log(ep)}  - c_\cT  \sigma \sqrt{\log(ep)}\right\} \geq 1- (e p)^{- c_\cT^2 /2},
\end{equation}
for any $c_\cT>0$. Choosing $c_\cT = \ccEstar_{\cT_{\cI_0}}(2^{-1/2} - 2^{-1})$, and using the fact that $\ccEstar_{\cT_{\cI_0}}^2 \geq 16 \{\log(ep)\}^{-1}$, we get

\begin{equation}
    \label{eq: linear term lower bound 2}
    \pr \left\{ \max_{\cD \in \ccC_{j_0}} \widehat{\bgamma}_{\cD}^\top \widetilde{\bvarepsilon} \geq \sigma \ccEstar_{\cT_{\cI_0}} \sqrt{\log(ep)}/2  \right\} \geq 1- e^{-1}.
\end{equation}
Let us define $\widehat{\tau}_{j_0} = \max_{\cD \in \ccC_{j_0}} \Delta(\cD) \beta_{j_0}^2.$, and by construction we have $\widehat{\tau}(s) = \max_{j_0 \in \cS}\widehat \tau_{j_0}$.
If $\widehat{\tau}_{j_0}^{1/2} \leq \sigma \ccEstar_{\cT_{\cI_0}} \sqrt{\log(ep)}/(2n^{1/2})$, then we have

\[
\min_{\cD \in \ccC_{j_0}} \left[\norm{\bgamma_\cD}_2 \left\{ \norm{\bgamma_\cD}_2 - 2n^{-1/2} \widehat{\bgamma}_{\cD}^\top \widetilde{\bvarepsilon}\right\}\right] \leq - \frac{ \sigma^2 \ccEstar_{\cT_{\cI_0}} \sqrt{\log(ep)}}{2n} 
\]
Thus, using \eqref{eq: true proj deviation} and the above display we have 
\[
\pr (\cS \notin \argmin_{\cD: \abs{\cD} = s} R_\cD) = \pr \left(\min_{\cD \in \ccC_{j_0}} n^{-1}(R_\cD - R_\cS) <0 \right) \geq 1 - \frac{1}{e} - \frac{1}{2} \geq \frac{1}{10},
\]
as we have $\ccEstar_{\cT_{\cI_0}}\sqrt{\log(ep)} >4 $ (Assumption 2). Thus the necessary condition turns out to be 
\begin{equation}
\label{eq: necessary condition 1st decomposition}
\widehat{\tau}_{j_0} \geq \frac{\ccEstar_{\cT_{\cI_0}}^2}{4} \frac{\sigma^2 \log(ep)}{n}.
\end{equation}

\subsubsection*{\textbf{Second decomposition:}}
We again start with the difference of RSS between a candidate model $\cD \in \ccA_{s,k}$ and the true model $\cS$:
\begin{equation}
\label{eq: RSS diff 2}
    \begin{aligned}
    & n^{-1} (R_\cD - R_\cS)\\
    & = n^{-1}\{ \by^\top (\bbI_n - \bP_\cD ) \by - \by^\top (\bbI_n - \bP_\cS) \by\}\\
    &= n^{-1} \left\{ (\bX_{\cS \setminus \cD} \bbeta_{\cS \setminus \cD} + \bvarepsilon)^\top (\bbI_n - \bP_{\cD}) (\bX_{\cS \setminus \cD} \bbeta_{\cS \setminus \cD} + \bvarepsilon) -  \bvarepsilon^\top (\bbI_n - \bP_\cS) \bvarepsilon \right\}\\
    & =   \bbeta_{\cS\setminus \cD}^\top \Gamma(\cD) \bbeta_{\cS\setminus \cD} - 2 \left\{n^{-1} (\bbI_n - \bP_{\cD}) \bX_{\cS\setminus \cD}\bbeta_{\cS\setminus \cD}\right\}^\top \widetilde{\bvarepsilon} - n^{-1} \bvarepsilon^\top(\bP_\cD - \bP_\cS) \bvarepsilon.
    \end{aligned}
\end{equation}

First of all, in order achieve model consistency, the following is necessary for any $k \in [s]$:

\begin{equation}
\label{eq: RSS diff necessary}
\min_{\cD\in \ccC_{j_0}} n^{-1}(R_\cD - R_{\cS}) >0. 
\end{equation}
Recall that $\widehat{\tau}_{j_0} = \max_{\cD \in \ccC_{j_0}} \Gamma(\cD) \beta_{j_0}^2$.
Next we note that

\begin{equation}
    \label{eq: RSS diff 3}
    \begin{aligned}
     \min_{\cD \in \cD_{j_0}} n^{-1}(R_\cD - R_\cS) &\leq \min_{\cD \in \ccC_{j_0}} \{ \bbeta_{\cS\setminus \cD}^\top \Gamma(\cD) \bbeta_{\cS\setminus \cD} - 2 n^{-1/2} \bgamma_\cD^\top \widetilde{\bvarepsilon} - n^{-1} \bvarepsilon^\top (\bP_\cD - \bP_\cS) \bvarepsilon\}\\
     & \leq  \widehat{\tau}_{j_0}  +2 n^{-1/2}\max_{\cD \in \ccC_{j_0}} \abs{\bgamma_\cD^\top \widetilde{\bvarepsilon}} - n^{-1}\max_{\cD \in \ccC_{j_0}} \bvarepsilon^\top (\bP_\cD - \bP_\cS) \bvarepsilon\\
     & \leq \widehat{\tau}_{j_0} + 2 (\widehat{\tau}_{j_0}/n)^{1/2} \max_{\cD \in \ccC_{j_0}} \abs{\widehat{\bgamma}_\cD^\top \widetilde{\bvarepsilon}} - n^{-1}\max_{\cD \in \ccC_{j_0}} \bvarepsilon^\top (\bP_\cD - \bP_\cS) \bvarepsilon.
    \end{aligned}
\end{equation}

Similar to the proof of Theorem 1, we define $f_\cD : = \widehat{\bgamma}_{\cD}^\top \widetilde{\bvarepsilon} $ and $\norm{f} := \max_{\cD\in \ccC_{j_0}} f_\cD$. Hence, we have 
\begin{equation}
\label{eq: max T}
\max_{\cD\in \ccC_{j_0}} \abs{\widehat{\bgamma}_\cD^\top \widetilde{\bvarepsilon}} = \max_{\cD \in \ccC_{j_0}} f_\cD \vee (-f_\cD)
\end{equation}
By Borell-TIS inequality \cite[Theorem 2.1.1]{adler2007random}, we have 
\begin{equation*}
\label{eq: borell-TIS 2}
\pr \left\{ \norm{f} - \bbE(\norm{f}) \geq \sigma u \right\} \leq \exp \left(- \frac{u^2}{2 }\right),
\end{equation*}
for all $u>0$. Setting $u = c_\cT\sqrt{ \log (ep)}$ we get 

\[
\pr \left\{ \norm{f} - \bbE(\norm{f}) \geq c_\cT\sigma \sqrt{  \log (ep)}\right\} \leq (ep)^{-c_\cT^2 /2}.
\]

% Recall that $\sfd_{\cT_k}$ is the minimum separation between the elements in $\cT_k$. Thus, $\cT_k$ is a $\sfd_{\cT_k}$-packing of itself. Again, using Dudley's integral bound, we get
\[
\bbE(\norm{f}) \leq 4\sqrt{2} \ccE_{\cT_{\cI_0}} \sigma \sqrt{ \log(ep)},
\]
which ultimately yields 
\[
\pr \left\{ \norm{f} \geq (4\sqrt{2} \ccE_{\cT_{\cI_0}} + c_\cT) \sigma \sqrt{ \log(ep)}\right\} \leq (ep)^{-c_\cT^2 /2}.
\]
Finally. using \eqref{eq: max T} we have the following for any $c_\cT>0$:
\begin{equation}
\label{eq: lin deviation}
    \pr \left\{ \max_{\cD\in \ccC_{j_0}} \abs{\widehat{\bgamma}_\cD^\top \widetilde{E}} \geq (4\sqrt{2} \ccE_{\cT_{\cI_0}} + c_\cT) \sigma \sqrt{ \log(ep)}\right\} \leq 2 (ep)^{-c_\cT^2 /2}.
\end{equation}
Next, we will lower bound the quadratic term in Equation \eqref{eq: RSS diff 3} with high probability.
similar to the proof of Theorem 1, we consider the decomposition
% \[
% \max_{\cD \in \ccC_{j_0}}n^{-1} E^\top (P_\cD - P_\cS)E = \max_{\cD \in \ccC_{j_0}}n^{-1} E^\top(P_\cD - P_{\cD_{0,1}}) E + n^{-1} E^\top(P_{\cD_{0,1}} - P_\cS) E,
% \]
\[
\max_{\cD \in \ccC_{j_0}}n^{-1} \bvarepsilon^\top (\bP_\cD - \bP_\cS)\bvarepsilon = \max_{j \notin \cS} n^{-1} \bvarepsilon^\top (\widehat{\bu}_j \widehat{\bu}_j^\top - \widehat{\bu}_{j_0}\widehat{\bu}_{j_0}^\top)\bvarepsilon. 
\]
% $\cD_{0,1} \in \ccC_{j_0}$ and $\norm{P_\cS - P_{\cD_{0,1}}}_{\op} = \sfd_{j_0}^\circ$.

% For the second part, we can use Theorem 1.1 of \cite{rudelson2013hanson} to conclude

% \begin{equation}
%     \label{eq: thm 2 : quad deviation }
%     \begin{aligned}
%     & \pr \left\{ n^{-1} E^\top (P_{\cD_{0,1}} - P_\cS)E > -\frac{64 \sqrt{2}  c_\cG  \sigma^2 \sfd_{j_0}^\circ \log(ep)}{n}\right\}\\
%     & \geq 1 - 2 \exp \left\{-  4c_\cG  \log(ep) \right\}\\
%     & = 1 - 2 (ep)^{- 4 c_\cG},
%     \end{aligned}
% \end{equation}

For the maximal process we will use Theorem 2.10 of \cite{adamczak2015note}. We begin with the definition of concentration property.
\begin{definition}[\cite{adamczak2015note}]
\label{def: concentration property}
Let $Z$ be random vector in $\bbR^n$. We say that $Z$ has concentration property with constant $K$ if for every 1-Lipschitz function $\varphi : \bbR^n \to \bbR$, we have $\bbE \abs{\varphi(Z)} < \infty$ and for every $u>0$,
\[
\pr \left( \abs{\varphi(Z) - \bbE(\varphi(Z))} \geq u\right) \leq 2 \exp(- u^2/K^2).
\]
\end{definition}
Note that the Gaussian vector $\bvarepsilon/\sigma$ enjoys concentration property with $K = \sqrt{2}$ \cite[Theorem 5.6]{boucheron2013concentration}. Let $Q_1 : =  \max_{j \notin \cS} \bvarepsilon^\top (\widehat{\bu}_j \widehat{\bu}_j^\top - \widehat{\bu}_{j_0}\widehat{\bu}_{j_0}^\top)\bvarepsilon$.
By Theorem 2.10 of \cite{adamczak2015note} we conclude that 
\[
\pr \left\{ n^{-1}\abs{Q_1 - \bbE(Q_1)} \geq t \sigma^2 \right\} \leq 2 \exp \left\{ - \frac{1}{2} \min \left( \frac{n^2 t^2}{16}, \frac{n t}{2}\right)\right\}.
\]

Setting $t =  2 \delta \log(ep)/n$ in the above equation we get
% \[
% \pr \left\{n^{-1} \abs{Q_1 - \bbE(Q_1)} \geq u \sigma^2\right\} \leq 2 \exp \left\{ - \frac{1}{C_0 } \min \left( \frac{u^2}{16  \sfD_{\cG_1}^2}, \frac{u}{\sfD_{\cG_1}}\right)\right\}.
% \]
% Finally, setting $u = C_0 \delta \sfD_{\cG_1}  \log(ep)$ in the above equation we get
\begin{equation}
    \label{eq: thm2 : quad deviation 2}
    \pr \left\{ n^{-1}\abs{Q_1 - \bbE(Q_1)} \geq  2 \delta  \sigma^2  \log(ep)/n \right\} \leq 2 (ep)^{-\frac{ \delta}{2 }  }.
\end{equation}

Next, we will lower bound the expected value of $Q_1$. First, note the following:
\begin{equation}
\label{eq: expected quadratic}
\begin{aligned}
    \bbE(Q_1) & = \bbE \left\{\max_{j \notin \cS} \bvarepsilon^\top (\widehat{\bu}_j \widehat{\bu}_j^\top - \widehat{\bu}_{j_0} \widehat{\bu}_{j_0}^\top) \bvarepsilon\right\}  \\
    & = \bbE \left\{\max_{j \notin \cS} (\widehat{\bu}_{j}^\top \bvarepsilon)^2 \right\}  - \sigma^2
    % \\
    % & \geq  \bbE\left\{\max_{j \in \cJ_0} (\widehat{u}_j^\top E)^2\right\} - \sigma^2\\
    \\
    & \geq  \left\{\bbE \max_{j \in \cS^c}\; (\widehat{\bu}_{j}^\top \bvarepsilon) \vee (-\widehat{\bu}_{j}^\top \bvarepsilon)\right\}^2  - \sigma^2
\end{aligned}
\end{equation}
% Let $\sfd_\cU := \min_{j ,j^\prime \notin \cS} \norm{\widehat{u}_j - \widehat{u}_{j^\prime}}_2$.  By \sapta{Lemma ??}, we have 
% $$
% \sfd_\cU \geq \min_{j ,j^\prime \notin \cS} \norm{\widehat{u}_{j} \widehat{u}_{j}^\top - \widehat{u}_{j^\prime} \widehat{u}_{j^\prime}^\top}_\op =: \sfd_{j_0}.
% $$ Thus, by Sudakov's inequality we have
% \[
% \bbE \max_{j \notin \cS} (\widehat{u}_{j}^\top E) \geq \frac{\sigma \sfd_\cU}{2} \sqrt{\log(p-s)} \geq \frac{\sigma \sfd_{j_0}}{2 \sqrt{2}} \sqrt{\log (ep)}, 
% \]
% when $s< p/2$ and $p$ is large enough.
Define the set 
\[
\cU_{\rm sym}:= \{\widehat{\bu}_j \; : j \in \cS^c\} \cup \{-\widehat{\bu}_j \; : j \in \cS^c\}.
\]
We denote by $\widetilde{\bu}_j$ a generic element of $\cU_{\rm sym}$. Thus for any two elements $\widetilde{\bu}_j, \widetilde{\bu}_k$, we have
\[
\norm{\widetilde{\bu}_j - \widetilde{\bu}_k}_2 \geq \min \left\{ \norm{\widehat{\bu}_j - \widehat{\bu}_k}_2, \norm{\widehat{\bu}_j + \widehat{\bu}_k}_2\right\} \geq \norm{\widehat{\bu}_j \widehat{\bu}_j^\top - \widehat{\bu}_k \widehat{\bu}_k^\top}_\op.
\]
By Sudakov's lower bound, we have
\begin{align*}
\bbE \max_{j \in \cS^c} \; (\widehat{\bu}_j^\top \bvarepsilon) \vee (- \widehat{\bu}_j^\top \bvarepsilon) 
&\geq \sup_{\delta>0} \frac{\sigma \delta}{2} \sqrt{\log \cM( \cU_{\rm sym}, \norm{\cdot}_2, \delta)}\\
& \geq \sup_{\delta>0} \frac{\sigma \delta}{2} \sqrt{\log \cM( \{\widehat{\bu}_j \widehat{\bu}_j^\top\}_{j \in \cS^c}, \norm{\cdot}_\op,  \delta)}\\
& \ge \sigma \frac{\ccEstar_{\cG_{\cI_0}}}{\sqrt{4/3}} \sqrt{ \log(ep)}.
\end{align*}
The last inequality uses the fact that $p>16e^3$.
Finally, \eqref{eq: expected quadratic} yields
\[
\bbE(Q_1) \geq \frac{\sigma^2 \ccEstar_{\cG_{\cI_0}}^2   \log (ep)}{4/3} - \sigma^2.
\]

Thus, combined with \eqref{eq: thm2 : quad deviation 2} we finally get

\[
\pr \left[ Q_1 \geq (3/4)\sigma^2 \ccEstar_{\cG_{\cI_0}}^2 \log (ep) - \sigma^2  - 2 \delta \sigma^2 \log(ep)  \right] \geq 1 - 2 (ep)^{-\delta/2 }.
\]
Setting $\delta = \frac{\ccEstar_{\cG_{\cI_0}}^2}{8}$, we get 
\[
\pr \left[ Q_1 \geq \frac{\sigma^2 \ccEstar_{\cG_{\cI_0}}^2}{2} \log (ep) - \sigma^2  \right] \geq 1 - 2 (ep)^{-\frac{\ccEstar_{\cG_{\cI_0}}^2}{16} }.
\]
Thus, finally combining the above with \eqref{eq: RSS diff 3} and \eqref{eq: lin deviation} we get
\begin{equation}
    \label{eq: RSS diff 4}
    \begin{aligned}
     &\min_{\cD \in \ccC_{j_0}} n^{-1}(R_\cD - R_\cS)\\
     & \leq \widehat{\tau}_{j_0} + 2 \widehat{\tau}_{j_0}^{1/2} (4\sqrt{2} \ccE_{\cT_{\cI_0}} + c_\cT) \sigma \sqrt{\frac{\log(ep)}{n}} - n^{-1}\left( \frac{\sigma^2 \ccEstar_{\cG_{\cI_0}}^2}{2} \log (ep) - \sigma^2    \right)
    \end{aligned}
\end{equation}
with probability at least $1 - 2 (ep)^{-c_{\cT}^2/2} - 2(ep)^{- \frac{\ccEstar_{\cG_{\cI_0}}^2}{16}}$. Thus, for large $p$ we have
\begin{equation*}
    %\label{eq: RSS diff 4}
    \begin{aligned}
     &\min_{\cD \in \ccC_{j_0}} n^{-1}(R_\cD - R_\cD)\\
     & \leq \widehat{\tau}_{j_0} + 2 \widehat{\tau}_{j_0}^{1/2} (4\sqrt{2} \ccE_{\cT_{\cI_0}} + c_\cT) \sigma \sqrt{\frac{\log(ep)}{n}} - \frac{\sigma^2 \ccEstar_{\cG_{\cI_0}}^2}{4} \frac{\log (ep)}{n} 
    \end{aligned}
\end{equation*}
with probability at least $1 - 2 (ep)^{c_{\cT}^2/2} - 2(ep)^{- \frac{\ccEstar_{\cG_{\cI_0}}^2}{16}}$. Now choose $c_\cT = 4/\sqrt{\log(ep)}$ and use Assumption 2 to get 
\begin{equation*}
    %\label{eq: RSS diff 4}
    \begin{aligned}
     &\min_{\cD \in \ccC_{j_0}} n^{-1}(R_\cD - R_\cD)\\
     & \leq \widehat{\tau}_{j_0} + 2 \widehat{\tau}_{j_0}^{1/2} \left(4\sqrt{2} \ccE_{\cT_{\cI_0}} + \frac{4}{\sqrt{\log(ep)}}\right) \sigma \sqrt{\frac{\log(ep)}{n}} - \frac{\sigma^2 \ccEstar_{\cG_{\cI_0}}^2}{4} \frac{\log (ep)}{n} \\
     & \leq \widehat{\tau}_{j_0} + 8\sqrt{2} \widehat{\tau}_{j_0}^{1/2} \left( \ccE_{\cT_{\cI_0}} + \frac{1}{\sqrt{2\log(ep)}}\right) \sigma \sqrt{\frac{\log(ep)}{n}} - \frac{\sigma^2 \ccEstar_{\cG_{\cI_0}}^2}{4} \frac{\log (ep)}{n}\\
     & \leq \widehat{\tau}_{j_0} + 10\sqrt{2} \widehat{\tau}_{j_0}^{1/2}  \ccE_{\cT_{\cI_0}} \sigma \sqrt{\frac{\log(ep)}{n}} - \frac{\sigma^2 \ccEstar_{\cG_{\cI_0}}^2}{4} \frac{\log (ep)}{n}
    \end{aligned}
\end{equation*}
with probability at least $1/5$.
% Now, we set $k =1$. Thus, the above inequality will yield the following:

% \begin{equation}
%     \label{eq: RSS diff 5}
%     \begin{aligned}
%      &\min_{\cD \in \ccA_{s,k}} n^{-1}(R_\cD - R_\cD)\\
%      & \leq  \tau_1 + 4 \tau_1^{1/2} (8 \sfD_{\cT_1} + c_\cT) \sigma \sqrt{\frac{\log(p)}{n}} -\\
%      & \left(\frac{\sigma^2 \widetilde{\sfd}_{\cG_1}^2}{8n}  \log \left(p \right) - 2 C_0 \delta \sfD_{\cG_k} \sigma^2 \frac{ \log(p)}{n} 
%      -\frac{128 \sqrt{2}  c_\cG \sfd_{\cG_k}^\circ \sigma^2  \log(p)}{n}\right)\\
%      & \leq \tau_1 + 4 \tau_1^{1/2} (8 \sfD_{\cT_1} + c_\cT) \sigma \sqrt{\frac{\log(p)}{n}} -  \left(\frac{\sigma^2 \ccR_{\cG_1} \sfD_{\cG_1}}{16n}  \log \left(p \right)\right)
%     \end{aligned}
% \end{equation}
% Here, we have used that $\log(p-s)\geq \log(p)/2$ and $\log(ep) \leq 2 \log(p)$.
Thus, in light of \eqref{eq: RSS diff necessary}, the following is necessary:
\begin{equation}
\label{eq: neccessary condition 2nd decomposition}
\widehat{\tau}_{j_0}\geq \left\{\frac{\sqrt{200 \ccE_{\cT_{\cI_0}}^2 + \ccEstar_{\cG_{\cI_0}}^2} - 10\sqrt{2} \ccE_{\cT_{\cI_0}}}{2}\right\}^2 \frac{\sigma^2 \log(ep)}{n}.
\end{equation}

\paragraph{\textbf{Case 1:}}
If $\ccE_{\cT_{\cI_0}} \leq \ccEstar_{\cG_{\cI_0}} $, then the right hand side of \eqref{eq: neccessary condition 2nd decomposition} is lower bounded by
\[
\frac{\ccEstar_{\cG_{\cI_0}}^2}{(\sqrt{201} + 10\sqrt{2})^2} \frac{\sigma^2\log(ep)}{n}.
\]
Thus \eqref{eq: neccessary condition 2nd decomposition} yields the necessary condition
\[
\widehat{\tau}_{j_0} \geq \frac{\ccEstar_{\cG_{\cI_0}}^2}{(\sqrt{201} + 10\sqrt{2})^2} \frac{\sigma^2\log(ep)}{n}.
\]
Combining this with \eqref{eq: necessary condition 1st decomposition} we have the necessary condition to be 
\[
\widehat{\tau}_{j_0} \geq \tilde{C}_1 \max\{\ccEstar_{\cT_{\cI_0}}^2, \ccEstar_{\cG_{\cI_0}}^2 \} \frac{\sigma^2 \log(ep)}{n},
\]
for a universal constant $\tilde{C}_1$.

\paragraph{\textbf{Case 2:}} If $\ccEstar_{\cG_{\cI_0}} \leq \ccEstar_{\cT_{\cI_0}}$, then using the inequality $\sqrt{1+t} - \sqrt{t}<1$ for all $t>0$, we can conclude that the right hand side of \eqref{eq: neccessary condition 2nd decomposition} is always smaller than
\[
\frac{\ccEstar_{\cG_{\cI_0}}^2}{4} \frac{\sigma^2\log(ep)}{n},
\]
which is further smaller than $$\frac{\ccEstar_{\cT_{\cI_0}}^2}{4} \frac{\sigma^2\log(ep)}{n}.$$
Thus, combining this with \eqref{eq: necessary condition 1st decomposition} we have the necessary condition to be 
\[
\widehat{\tau}_{j_0} \geq \frac{\ccEstar_{\cT_{\cI_0}}^2}{4} \frac{\sigma^2\log(ep)}{n} = \max\{\ccEstar^2_{\cT_{\cI_0} }, \ccEstar_{\cG_{\cI_0}}^2\} \frac{\sigma^2 \log(ep)}{4 n}.
\]

Combining all these cases we finally have the following necessary condition for consistent model selection with $C_1, C_2>0$ being some absolute constants:

\begin{equation}
\label{eq: necessary margin 1}
\widehat{\tau}_{j_0} \geq C_1 \max\{\ccEstar_{\cT_{\cI_0}}^2, \ccEstar_{\cG_{\cI_0}}^2 \} \frac{\sigma^2 \log(ep)}{n}, \quad \text{if $\ccEstar_{\cG
_{\cI_0}} \notin (\ccEstar_{\cT_{\cI_0}}, \ccE_{\cT_{\cI_0}})$},
\end{equation}
or,
\[
\widehat{\tau}_{j_0} \geq C_2 \max\left\{ \ccEstar_{\cT_{\cI_0}}^2, \left( \sqrt{200 \ccE_{\cT_{\cI_0}}^2 + \ccEstar_{\cG_{\cI_0}}^2} - 10\sqrt{2} \ccE_{\cT_{\cI_0}}\right)^2 \right\}\frac{\sigma^2 \log(ep)}{n}, 
\]
if 
$\ccEstar_{\cG
_{\cI_0}} \in (\ccEstar_{\cT_{\cI_0}}, \ccE_{\cT_{\cI_0}})$.

If there exists $\cI_0 \in \cJ$ such that $\ccEstar_{\cT_{\cI_0}}/\ccE_{\cT_{\cI_0}} \in (\alpha, 1)$, then in the preceding case it follows that the last display can be simplified in the following form:
\[
\widehat{\tau}_{j_0} \geq  C_2 \max\left\{ \ccEstar_{\cT_{\cI_0}}^2, A_\alpha \ccEstar_{\cG_{\cI_0}}^2 \right\}\frac{\sigma^2 \log(ep)}{n}, \quad \text{if $\ccEstar_{\cG
_{\cI_0}} \in (\ccEstar_{\cT_{\cI_0}}, \ccE_{\cT_{\cI_0}})$},
\]
where 
\[
A_\alpha =
\left(\sqrt{\frac{200}{\alpha^2} + 1} + \frac{10\sqrt{2}}{\alpha}\right)^{-1}.
\]
Thus, using the fact that $ A_\alpha <1$ and combining the previous three displays we have the necessary condition to be
\begin{equation}
\label{eq: necessary margin 2}
\widehat{\tau}_{j_0} \geq C_2 A_\alpha \max\left\{ \ccEstar_{\cT_{\cI_0}}^2, \ccEstar_{\cG_{\cI_0}}^2 \right\}\frac{\sigma^2 \log(ep)}{n}.
\end{equation}
Since, \eqref{eq: necessary margin 1} and \eqref{eq: necessary margin 2} hold for all choices of $j_0$ and $\cI_0$ depending on whether the $\ccEstar_{\cG
_{\cI_0}} \in (\ccEstar_{\cT_{\cI_0}}, \ccE_{\cT_{\cI_0}})$ is satisfied or not, the claim follows.
\end{proof}
\subsection{Correlated random feature model example (Proof of Lemma 2)}
\label{sec: random feature model example}
Consider the model (1). We assume that the rows of $\bX$ are independently generated from $p$-dimensional multivariate Gaussian distribution with mean-zero and variance-covariance matrix

\[
\bSigma = \begin{pmatrix}
1 & c \bone_{p-1 }^\top\\
c\bone_{p-1} & (1- r)\bbI_{p-1} + r \bone_{p-1} \bone_{p-1}^\top
\end{pmatrix},
\]
where $c \in [0, 0.997], r \in [0,1)$ and true model is $\cS = \{1\}$. We need to further impose the restriction 
$$c^2< r + \frac{1-r}{p-1}$$ 
to ensure positive definiteness of $\Sigma$. In this case

\[
\widehat{\tau} = \beta_{1}^2 \min_{j \neq 1} \left\{ \frac{\norm{\bX_{1}}_2^2}{n} - \frac{(\bX_{1}^\top \bX_{j}/n)^2}{\norm{\bX_{j}}_2^2/n} \right\}  = \frac{\beta_1^2 \norm{\bX_1}_2^2}{n} - \beta_1^2 \max_{j \neq 1} \frac{(\bX_1^\top \bX_j/n)^2}{\norm{\bX_j}_2^2/n}.
\]
We start with providing an upper bound on the margin quantity $\widehat{\tau}$. Using Equation \eqref{eq: LM1-simple} and \eqref{eq: LM2-simple}, for any $\tol\in (0,1)$ we get

\begin{equation}
    \label{eq: norm concentration 1}
    \pr \left( \frac{\norm{\bX_j}_2^2}{n} \geq 1 + \tol\right) \leq \exp(-n \tol^2/16), \quad \forall j \in [p].
\end{equation}

\begin{equation}
    \label{eq: norm concentration 2}
    \pr \left( \frac{\norm{\bX_j}_2^2}{n} \leq 1 - \tol\right) \leq \exp(-n \tol^2/4), \quad \forall j \in [p].
\end{equation}

Using Equation \eqref{eq: norm concentration 1} and Equation \eqref{eq: norm concentration 2}, we also get the following:
\begin{equation}
\label{eq: norm concentration 3}
 \begin{aligned}
    \pr \left(\max_{j \neq 1} \abs{\frac{\norm{\bX_j}_2^2}{n} - 1} \geq \tol \right) \leq 2 p \exp(-n \tol^2/16).
 \end{aligned}
\end{equation}

Let $\bX_j = (x_{1,j}, \ldots, x_{n,j})^\top$ for all $j \in [p]$. To this end we recall the \textit{sub-Gaussian norm} $\norm{\cdot}_{\psi_2}$ \cite[Definiton 2.5.6]{vershynin_2018} and the \textit{sub-exponential norm} $\norm{\cdot}_{\psi_1}$ \cite[Definition 2.7.5]{vershynin_2018}.
Now due to \cite[Lemma 2.7.7]{vershynin_2018}, we have that $\norm{x_{u,1} x_{u,j}}_{\psi_1} \leq \norm{x_{u,1}}_{\psi_2} \norm{x_{u,2}}_{\psi_2} \leq 4.$
Thus, by Berstein's inequality, we have
\[
\pr \left( \abs{\frac{\bX_1^\top \bX_j}{n} - c} > \tol \right) \leq \exp \left( - C n \min\{\tol,\tol^2\} \right),
\]
where $C>0$ is a universal constant. Thus, we have
\begin{equation}
    \label{eq: cross term 1}
    \pr \left( \max_{j \neq 1}\abs{\frac{\bX_1^\top \bX_j}{n} - c} > \tol \right) \leq p \exp(- C n \min\{\tol,\tol^2\}).
\end{equation}
Combining \eqref{eq: norm concentration 2}, \eqref{eq: norm concentration 3} and \eqref{eq: cross term 1} we have
\begin{equation}
    \label{eq: prob margin}
    \begin{aligned}
    &\pr \left[ \left\{1 + \tol - \frac{(c - \tol)^2}{1 + \tol} \right\}\geq \frac{\widehat{\tau}}{\beta_1^2} \geq  \left\{1 - \tol - \frac{(c+ \tol)^2}{1 - \tol} \right\} \right]\\
    & \geq 1 - \exp(-n \tol^2/16) - 2p\exp(-n \tol^2/4) - p \exp(-C \tol^2 n)\\
    & = 1 + o(1/p),
    \end{aligned}
\end{equation}
if $\tol \asymp   \{(\log p)/n\}^{1/2}$ and $(\log p)/n$ is small enough. Similarly, due to Bernstein's inequality,
it can also be shown that 

\begin{equation}
    \label{eq: cross term 2}
    \pr \left( \max_{j,k \neq 1}\abs{\frac{\bX_k^\top \bX_j}{n} - r} \leq \tol\right) \geq 1 - p^2 \exp(- C n \tol^2) = 1 + o(1/p),
\end{equation}
where $r \in [0,1)$ and with the same conditions on $\tol$.

Next, we will analyze the geometric quantities.
In this case, we have 
\[
\widehat{\bgamma}_j = \frac{\bX_{1} - \frac{\bX_j^\top \bX_{1}}{\norm{\bX_j}_2^2}.\bX_{j}}{\sqrt{\norm{\bX_{1}}_2^2 - \frac{(\bX_{1}^\top \bX_{j})^2}{\norm{\bX_{j}}_2^2}}}.
\] 
%{
% \color{red}
% What is the behaviour of 
% \[
% \sfd_\cT = \max_{j,k \neq 1} \norm{\widehat{\bgamma}_j - \widehat{\bgamma}_k}_2 = ??
% \]
% }

Note that \begin{align*}
    \norm{\widehat{\bgamma}_j - \widehat{\bgamma}_k}_2^2 & = 2 (1 - \widehat{\bgamma}_j^\top \widehat{\bgamma}_k)
\end{align*}
and 
\begin{align*}
    \widehat{\bgamma}_j^\top \widehat{\bgamma}_k &= \dfrac{\norm{\bX_{1}}_2^2/n - \frac{
    (\bX_j^\top \bX_{1}/n)^2 }{\norm{\bX_j}_2^2/n} - \frac{
    (\bX_k^\top \bX_{1}/n)^2 }{\norm{\bX_k}_2^2/n} +\frac{
    (\bX_j^\top \bX_{1}/n) (\bX_k^\top \bX_{1}/n) (\bX_j^\top \bX_{k}/n)}{(\norm{\bX_j}_2^2/n) (\norm{\bX_k}_2^2/n)}}{\sqrt{\norm{\bX_{1}}_2^2/n - \frac{(\bX_{1}^\top \bX_{j}/n)^2}{\norm{\bX_{j}}_2^2/n}} \sqrt{\norm{\bX_{1}}_2^2/n - \frac{(\bX_{1}^\top \bX_{k}/n)^2}{\norm{\bX_{k}}_2^2/n}}}.
\end{align*}
Next, we consider the event
\[
\cG_n:= \left\{
\max_{j \in [p]} \abs{\frac{\norm{\bX_j}_2^2}{n} - 1} \leq \tol, \max_{j \neq 1} \abs{\frac{\bX_1^\top \bX_j}{n} - c} \leq \tol, \max_{j,k \neq 1}\abs{\frac{\bX_k^\top \bX_j}{n} - r} \leq \tol
\right\}.
\]
Due to \eqref{eq: norm concentration 1}, \eqref{eq: norm concentration 2}, \eqref{eq: cross term 1} and \eqref{eq: cross term 2} we have $\pr(\cG_n) = 1+ o(1/p)$. Also, for large $n,p$, the value of $\tol$ can be chosen such that $\tol <0.001$ so that $c+ \tol < 0.998$ for all $c \in [0, 0.997]$.

\paragraph{Complexity of unexplained signals:}
Let $\bu := (u_1, u_2,u_3)\in \bbR^3$ and $\bt := (t_1, t_2, t_2)\in \bbR^3$. 
Define the function
\[
\Phi(\bu, \bt) := \frac{u_1- (t_1^2/u_2) - (t_2^2/u_3) + (t_1 t_2 t_3)/(u_2 u_3)}{\sqrt{u_1 - t_1^2/u_2} \sqrt{u_1 - t_2^2/u_3}},
\]
where 
$$(u_1,u_2,u_3, t_1, t_2, t_3) \in \underbrace{[0.999, 1.001]\times[0.999, 1.001] \times [0.999, 1.001]\times [0,0.998]\times [0,0.998]\times [0,1]}_{:= \cK}.$$
It is easy to see that the function $\Phi$ is continuously differentiable on the compact set $\cK$. Hence, there exists a universal constant $L>0$ such that 
\[
\abs{\Phi(\bu, \bt) - \Phi(\bu^\prime, \bt^\prime)} \leq L (\norm{\bu - \bu^\prime}_1 + \norm{\bt - \bt^\prime}_1). 
\]
Noting the fact that 
\[
\widehat{\bgamma}_j^\top \bgamma_{k} = \Phi\left(\frac{\norm{\bX_1}_2^2}{n}, \frac{\norm{\bX_j}_2^2}{n}, \frac{\norm{\bX_K}_2^2}{n}, \frac{\bX_1^\top \bX_j}{n}, \frac{\bX_1^\top \bX_k}{n}, \frac{\bX_j^\top \bX_k}{n}\right),
\]
it follows that on the event $\cG_n$, the following holds for all $j,k\in [p]\setminus \{1\}:$
\begin{align*}
&\abs{\norm{\widehat{\bgamma}_j - \widehat{\bgamma}_k}_2^2 - \frac{2c^2(1-r)}{1-c^2}}  \leq 12 L \tol, \\
&\Rightarrow \sqrt{\max\left\{
\frac{2c^2(1-r)}{1-c^2} - 12L \tol, 0
\right\}} \leq \norm{\widehat{\bgamma}_j - \widehat{\bgamma}_k}_2 \leq \sqrt{\frac{2c^2(1-r)}{1-c^2} + 12L \tol}.\\ 
\end{align*}
Hence we have
{\small
\begin{align*}
&\ccE_{\cT_\emptyset}\\
& \hspace{-.3in}= \{\log(ep)\}^{-1/2} \Big[\int_0^{\sqrt{\left(\frac{2c^2(1-r)}{1-c^2} - 12L \tol\right)\vee 0}} \sqrt{\log \cN( \cT_\emptyset, \norm{\cdot}_2, \varepsilon)}\; d\varepsilon\\
& +
\int_{\sqrt{\left(\frac{2c^2(1-r)}{1-c^2} - 12L \tol\right)\vee 0}}^{\sqrt{\frac{2c^2(1-r)}{1-c^2} + 12L \tol}} \sqrt{\log \cN( \cT_\emptyset, \norm{\cdot}_2, \varepsilon)}\; d\varepsilon\Big].
\end{align*}
}
Applying Lemma \ref{lemma: sqrt lipschitz} on the second integral, it follows that
{
\[
\hspace{-0.2in} \omega_{n,p} \sqrt{\frac{\log p}{\log(ep)}} \leq \ccE_{\cT_\emptyset} \leq \left(\omega_{n,p} + \sqrt{24 L\tol}\right) \sqrt{\frac{\log p}{\log(ep)}},
\]
}
where $\omega_{n,p} = \sqrt{\left(\frac{2c^2(1-r)}{1-c^2} - 12L \tol\right)\vee 0}$.
Thus, for $c =0$ we have $0 \leq \ccE_{\cT_\emptyset} \leq \sqrt{24L \tol}$. For any fixed $c>0$ and $r \in [0,1)$ we have
\begin{equation}
\label{eq: prob_example_linear_complexity}
\ccE_{\cT_\emptyset} \sim \left\{ \frac{2c^2(1-r)}{1-c^2}\right\}^{1/2} \quad \text{for large $n,p$}.
\end{equation}
% By similar arguments we get
% \[
% \pr \left( \widehat{\bgamma}_j^\top \widehat{\bgamma}_k \geq \frac{1- \tol - \frac{(c+\tol)^2}{1 - \tol}- \frac{(c+\tol)^2}{1 - \tol} + (r - \tol )  \frac{(c - \tol)^2}{(1 + \tol)^2}}{\sqrt{1 + \tol - \frac{(c - \tol)^2}{1 + \tol}} \sqrt{1 + \tol - \frac{(c-\tol)^2}{1 + \tol}}}\right) = 1 + o(1/p^2).
% \]
% Hence, a union-bound argument yields

% \begin{equation}
%     \label{eq: diam T}
%     \pr \left\{ \sfD_{\cT_\emptyset} \leq 2 \left(1 - \frac{1-c - \frac{9c^2/4}{1 - c}- \frac{9c^2/4}{1 - c} + (r - c )  \frac{c^2}{(1 + c)^2}}{\sqrt{1 + c - \frac{c^2}{1 + c}} \sqrt{1 + c - \frac{c^2}{1 + c}}}\right) \right\}  =  1 + o(1/p),
% \end{equation}
% provided that $n \gtrsim c^{-2} \log p$.
% %Then union bound should yield that $\sfd_\cT \approx 0$.

\paragraph{Complexity of spurious projections:}
For $j, k \neq 1$, let $\theta_{j,k}$ denote the
angle between $\bX_j$ and $\bX_k$.
\[
\norm{\bP_j- \bP_k}_{\op} = \sin(\theta_{j,k}) = \sqrt{1 - \cos^2(\theta_{j,k})} =  \sqrt{1 - \left( \frac{\bX_j^\top \bX_k}{\norm{\bX_j}_2 \norm{\bX_k}_2}\right)^2}.
\]
By a similar argument as above, we can conclude that there exists a universal constant $M>0$ such that on the event $\cG_n$ we have,
\[
\abs{\norm{\bP_j - \bP_k}_\op^2 - (1 - r^2)} \leq M \tol, \quad \text{for all $j,k \in [p]\setminus \{1\}$}.
\]
Thus, for any fixed $r \in [0,1)$ we have
\begin{equation}
\label{eq: prob_example_proj_complexity}
\ccE_{\cG_\emptyset} \sim (1-r^2)^{1/2}.
\end{equation}

\section{Generalized linear model}
\label{supp: GLM}
In this section, we will focus on the best subset selection problem under generalized linear models (GLM). Similar to the linear regression setup, we will also adopt the fixed design setup in this case. In particular, given the data matrix $\bX := (\bx_1, \ldots, \bx_n)^\top \in \bbR^{n \times p}$ we observe the responses $\by:= (y_1, \ldots, y_n)^\top$ coming from the distribution
\begin{equation}
\label{eq: glm pdf}
f_{\bx, \bbeta^*}(y):= h(y) \exp \left\{\frac{y (\bx^\top \bbeta^*) - b(\bx^\top \bbeta^*)}{\phi}\right\} =  h(y) \exp \left\{\frac{y \eta - b(\eta)}{\phi}\right\}.
\end{equation}
Here $\eta = \bx^\top \bbeta^*$ is linear predictor and $\bbeta^*$ is true parameter with $\norm{\bbeta^*}_0 = s$ and support $\cS$. The functions $b: \bbR \to \bbR$ and $h: \bbR\to \bbR$ are known and specific to modeling assumptions. Examples include several well-known models  such as
\begin{enumerate}
    \item Linear regression: Consider the linear regression model $y = \bx^\top \bbeta^* + \varepsilon$, where $\varepsilon\sim \sfN(0, \sigma^2)$. In this case $h(y) = \exp\{-y^2/(2\sigma^2)\}$ and $b(u) =  u^2/2$.
    \item Logistic regression: In this model $y \sim \Ber(1/(1 + \exp(- \bx^\top \bbeta^*)))$. Standard calculations show that $h(y) =1$ and $b(u) = \log(1+e^u)$.
\end{enumerate}

For the purpose of model selection, we choose the loss function to be the scaled negative log-likelihood function 
\[\cL(\bbeta;\{(\bx_i,y_i)\}_{i \in [n]}) = \frac{2}{n}\sum_{i \in [n]} \ell(\bbeta; (\bx_i, y_i)),\]
where  
$
\ell(\bbeta; (\bx, y)) = -y(\bx^\top \bbeta) + b(\bx^\top \bbeta).
$
Furthermore, for a candidate model $\cD\in \ccA_s$ and $\tilde{\bbeta}\in \bbR^s$, define the restricted version of the scaled negative log-likelihood function as $\cL_{\cD}(\tilde{\bbeta};\{(\bx_{i, \cD},y_i)\}_{i \in [n]}) := (n/2)^{-1}\sum_{i \in [n]} \ell_\cD(\tilde{\bbeta}; (\bx_{i,\cD}, y_i))$ with
$\ell_\cD(\Tilde{\bbeta}; (\bx_\cD,y)):= -y(\bx_\cD^\top \tilde{\bbeta}) + b(\bx_{\cD}^\top \tilde{\bbeta}) $. Let $\widehat{\bbeta}_{\cD}$ be the unique minimizer of $\cL_{\cD}(\tilde{\bbeta};\{(\bx_{i, \cD},y_i)\}_{i \in [n]})$.
Under the oracle knowledge of sparsity $s$, BSS solves for 
\[
\widehat{\cS}_{\rm best} = n^{-1}\argmin_{\cD: \abs{\cD}=s} \cL_{\cD}(\widehat{\bbeta}_\cD; \{(\bx_{i,\cD},y_i)\}_{i \in [n]}).
\]
Next, we will introduce the quantities that capture the degree of separation between the true model $\cS$ and a candidate model $\cD \in \ccA_s$ and characterize the identifiability margin for model selection consistency. Let $\cP_{\cD, \tilde{\bbeta}}$ be probability measure corresponding to the joint density $\prod_{i \in [n]} f_{\bx_{i,\cD}, \tilde{\bbeta}}(y_i)$ and define 
\begin{equation*}
\begin{aligned}
\Delta_\kl(\cD) & := \frac{2 \phi}{n}\min_{\tilde{\bbeta} \in \bbR^s}\KL\left( \cP_{\cS, \bbeta^*_\cS} \; \big\Vert \; \cP_{\cD, \tilde{\bbeta}}\right)\\
& = \frac{2 }{n} \sum_{i=1}^n \left\{(\bx_{i,\cS}^\top\bbeta^*_\cS) b^\prime(\bx_{i,\cS}^\top\bbeta^*_\cS) - b(\bx_{i,\cS}^\top\bbeta^*_\cS)\right\}\\
& \quad - \max_{\tilde{\bbeta} \in \bbR^s}\frac{2}{n} \sum_{i=1}^n \left\{(\bx_{i,\cD}^\top \tilde{\bbeta}) b^\prime(\bx_{i,\cS}^\top\bbeta^*_\cS) - b(\bx_{i,\cD}^\top \tilde{\bbeta})\right\},  
\end{aligned}
\end{equation*}
where $\KL(\cdot \Vert \cdot)$ denotes the Kullback-Leibler (KL) divergence.
The above quantity can be thought of as the degree of model separation as it measures the minimum KL-divergence between the likelihood generated by the data under $(\cS, \bbeta^*_\cS)$ and the likelihood generated by $\cD$ and all possible choices of $\bbeta \in \bbR^p$ with the support in $\cD$. 
Let $\bar{\bbeta}_\cD$ be the minimizer of the optimization problem in the above display, i.e.,
\[
\bar{\bbeta}_\cD:= \argmin_{\tilde{\bbeta} \in \bbR^s}\KL\left( \cP_{\cS, \bbeta^*_\cS} \; \big\Vert \; \cP_{\cD, \tilde{\bbeta}}\right).
\]
By definition it follows that $\bar{\bbeta}_\cS = \bbeta^*_\cS$.
% Then the first-order optimality condition tells that 
% \[
% \frac{1}{n} \sum_{i=1}^n b^\prime(\bx_{i,\cD}^\top \bar{\bbeta}_\cD) \bx_{i,\cD}
% \]
Also, note that for $\cP_{\cD, \bar{\bbeta}_\cD}$, the \textit{natural parameter} of the density function is $\bX_\cD \bar{\bbeta}_\cD$. Thus, one can also measure the separation between two models through the mutual distance between the corresponding natural parameters. This motivates the definition of the second measure of separability between the true model $\cS$ and candidate model $\cD$:
\[
\Delta_\param(\cD) := \frac{\norm{\bX_\cS \bbeta_\cS^* - \bX_\cD \bar{\bbeta}_\cD}_2^2
}{n}.
\]
Note that, under the linear regression model with isotropic Gaussian error, both $\Delta_\kl(\cD)$ and $\Delta_\param(\cD)$ becomes equal to  the quantity $\bbeta_{\cS \setminus \cD}^\top \Gamma(\cD) \bbeta_{\cS \setminus \cD}$. To see this, recall that for linear regression model $b(u) = u^2/2$ and the KL-divergence $\KL(\cP_{\cS, \bbeta^*_\cS} \; \big \Vert\; \cP_{\cD, \bar{\bbeta}_\cD}) = \norm{\bX_\cS \bbeta_\cS^* - \bX_\cD \bar{\bbeta}_\cD}_2^2/(2 \sigma^2)$. Thus, from the definition of $\bar{\bbeta}_\cD$, it immediately follows that $\bX_\cD \bar{\bbeta}_\cD = \bP_\cD \bX_\cS \bbeta^*_\cS $. Later, we will see that these two notions of distances are equivalent under certain regularity conditions on the link function $b(\cdot)$.
%Hence, the claim follows immediately from the preceding observation. With this analogy in mind, it is justified to say that these two quantities together capture the model discriminative power of BSS. In our  subsequent discussion, we will show that both $\Delta_\kl(\cD)$ and $\Delta_\param(\cD)$ play intricate roles in the margin condition for model consistency of BSS. 

\subsection{Identifiability margin and two complexities}
In this section we will introduce the identifiability margin and the two complexities similar the  case of linear model. We consider the following identifiability margin:
\[
\tilde{\tau}_{*}(s):= \min_{\cD \in \ccA_s} \frac{\Delta_\kl(\cD)}{\abs{\cD \setminus \cS}}.
\]
We assume that $\tilde{\tau}_*(s)>0$ to avoid non-identifiability issue.
Next, we consider the transformed features as follows:
\[
\widetilde{\bX}_\cD = \bLambda_\cD^{1/2} \bX_{\cD},
\]
where $\bLambda_\cD = \mathbf{diag}(b^\dprime(\bx_{1,\cD}^\top \bar{\bbeta}_\cD), \ldots, b^\dprime(\bx_{n,\cD}^\top \bar{\bbeta}_\cD))$. Let $\widetilde{\bP}_\cD$ be orthogonal projection matrices onto the columnspace of $\widetilde{\bX}_\cD$. Let $\widetilde{\bP}_{\cI \mid \cD}$ be the orthogonal projector onto the columnspace of $[\widetilde{\bX}_\cD]_\cI$. 
Now we define the following sets of residualized signals and spurious projections:
\[
\ctT_\cI^{(s)} = \left\{ \frac{\bX_\cD\bar{\bbeta}_\cD - \bX_\cS \bbeta_\cS^*}{\norm{\bX_\cD\bar{\bbeta}_\cD - \bX_\cS \bbeta_\cS^*}_2}  : \cD \in \ccA_\cI\right\},
\]

\[
\ctG_\cI^{(s)} = \left\{ \widetilde{\bP}_\cD - \widetilde{\bP}_{\cI\mid \cD} : \cD \in \ccA_\cI\right\}.
\]
The complexity measures for these two sets are $\ccE_{\ctT_\cI^{(s)}}$ and $\ccE_{\ctG_{\cI^{(s)}}}$ respectively, which are defined in the same way as the complexity measures in Section 3.2 and Section 3.3 of the main paper.
% \subsection{Complexity of residualized signals}

% \subsection{Complexity of spurious projections}

\subsection{Main results}
In this section we will state the main result analogous to the Theorem 1 in the main paper. We begin with some standard assumption necessary for the theoretical analysis for GLM models.
\begin{assumption}[Features and parameters]
\label{assumption: glm- features and params }
We assume the following conditions:
\begin{enumerate}[label = (\alph*)]
    \item \label{item: feature-param bound} There exists positive constants  $x_0$ and $R_0$ such that $\max_{i \in [n]}\norm{\bx_i}_\infty \leq x_0$ and $\norm{\bbeta^*}_1 \leq R_0$.
    \item \label{item: sparse eigenvalue} There exists a constant $\kappa_0>0$ such that 
    \[
    \min_{\cD\subset [p]: \abs{\cD} = s} \lambda_{\min}\left(\bX_\cD^\top \bX_\cD/n\right) \geq \kappa_0.\]
    % , \quad \text{\rm and} \quad \lambda_{\max}\left(\bX_\cS^\top \bX_\cS/n \right) \leq \kappa_\cS.
    % \]

    \item \label{item: third moment condition} There exists constant $M>0$ such that 
    \[
    \max_{\cD\subset [p]: \abs{\cD} = s} \norm{\frac{1}{n} \sum_{i \in [n]} \bx_{i,\cD} \otimes \bx_{i,\cD} \otimes \bx_{i,\cD} }_{\op} \leq M.
    \]

     \item \label{item: minimum KL param bound} There exists a constant $R>0$ such that $\max_{\cD \in \ccA_s} \max_{i \in [n]} \abs{\bx_{i, \cD}^\top \bar{\bbeta}_\cD} \leq x_0 R$.

    \item \label{item: glm noisy features not too correlated} The design matrix $\bX$ enjoys the following property:
    \[
    \min_{\cI \subset \cS} \ccE_{\tilde{\cG}_\cI^{(s)}}^2 > \{\log(ep)\}^{-1}.
    \]
\end{enumerate}
\end{assumption}
Assumption \ref{assumption: glm- features and params }\ref{item: feature-param bound} is very common in high-dimensional literature. Assumption \ref{assumption: glm- features and params }\ref{item: sparse eigenvalue} basically tells that the sparse-eigenvalues of $\bX$ are strictly bounded away from 0. Assumption \ref{assumption: glm- features and params }\ref{item: third moment condition} tells that the third order empirical moment of $\bX_\cD$ is bounded. A stronger version of Assumption \ref{assumption: glm- features and params }\ref{item: minimum KL param bound} is present in \cite{pijyan2020consistent, zheng2020building}, where the authors assume that $\norm{\bar{\bbeta}_\cD}_1$ is bounded uniformly over all $\cD \in \ccA_s$. Finally, Assumption \ref{assumption: glm- features and params }\ref{item: glm noisy features not too correlated} allows diversity among the spurious features. Next, we will assume some technical assumptions on the link function $b(\cdot)$.

\begin{assumption}[$b(\cdot)$ function]
\label{assumption: glm-b function}
    We assume the following conditions on $b(\cdot)$ function:
    \begin{enumerate}[label = (\alph*)]
        \item \label{item: strong convexity} There exists a function $\psi : \bbR_+ \to \bbR_+$ such that for any $\eta \in \bbR $ and any $\omega>0$, $b^\dprime(\eta) \geq \psi(\omega)$ whenever $\abs{\eta} \leq \omega$.

        \item \label{item: b-boundedness} There exists constants $B >0$ and  $\tilde{B}\ge 0$ such that $\norm{b^\dprime}_\infty \leq B$ and $ \norm{b^{\tprime}}_\infty\leq \tilde{B}$.
    \end{enumerate}
\end{assumption}
These assumptions on the link function are pretty common to analyze high-dimensional generalized models. Assumption \ref{assumption: glm-b function}\ref{item: strong convexity} basically assumes that $b(\cdot)$ is strongly convex within a compact neighborhood of 0. It is straightforward to check that this assumption is satisfied by standard GLM setups like linear regression and logistic regression. In particular,  one can choose $\psi(\omega) = 1$ for linear regression, and $\psi(\omega) = (3 + e^\omega)^{-1}$ in the case of logistic regression.
Furthermore, from \eqref{eq: glm pdf} it follows that $\bbE(y) = b^\prime(\bx^\top \bbeta^*)$ and $\var(y) = \phi b^\dprime(\bx^\top \bbeta^*)\geq \phi \psi(\omega)$, whenever $\abs{\bx^\top \bbeta^*} \le \omega$.

Finally, Assumption \ref{assumption: glm-b function}\ref{item: b-boundedness} tells that the second and third derivatives of $b(\cdot)$ are bounded. This guarantees the first convergence rates of the maximum likelihood estimator. Moreover, this assumption guarantees sub-Gaussianity of $y$ as 

\begin{equation}
    \label{eq: MGF glm}
    \begin{aligned}
        &\bbE(\exp\{t (y - b^\prime(\eta)\})\\
        &= e^{-t b^\prime(\eta)}\int_{-\infty}^\infty h(y) \exp\left\{\frac{(\eta + \phi t)y - b(\eta)}{\phi}\right\} \; dy\\
        & = \exp
        \left(\frac{b(\eta+ \phi t) - b(\eta) - t \phi b^\prime(\eta)}{\phi}\right) \int_{-\infty}^\infty h(y) \exp\left\{\frac{(\eta + \phi t)y - b(\eta + \phi t)}{\phi}\right\} \; dy\\
        & = \exp[\phi^{-1}\{b(\eta+ \phi t) - b(\eta) - t \phi b^\prime(\eta)\}] \leq \exp \left( \frac{\phi B t^2}{2}\right).
    \end{aligned}
\end{equation}

Under Assumption \ref{assumption: glm-b function}, we can compare between the margin quantities $\Delta_\kl(\cD)$ and $\Delta_\param(\cD)$. To see this, we first 
focus on $\Delta_{\kl}(\cD)$
. Recall that

\begin{align*}
&\Delta_\kl(\cD)\\
& = \frac{2 }{n} \sum_{i=1}^n \left\{(\bx_{i,\cS}^\top\bbeta^*_\cS) b^\prime(\bx_{i,\cS}^\top\bbeta^*_\cS) - b(\bx_{i,\cS}^\top\bbeta^*_\cS)\right\} - \frac{2}{n} \sum_{i=1}^n \left\{(\bx_{i,\cD}^\top \bar{\bbeta}_\cD) b^\prime(\bx_{i,\cS}^\top\bbeta^*_\cS) - b(\bx_{i,\cD}^\top \bar{\bbeta}_\cD)\right\}\\
& =\frac{2}{n} \sum_{i=1}^n \left\{
b(\bx_{i,\cD}^\top \bar{\bbeta}_\cD) - b(\bx_{i,\cS}^\top \bbeta_\cS^*) - (\bx_{i,\cD}^\top \bar{\bbeta}_\cD - \bx_{i,\cS}^\top \bbeta_\cS^*)  b^\prime(\bx_{i,\cS}^\top \bbeta_\cS^*)
\right\}\\
& = \frac{1}{n}\sum_{i=1}^n b^\dprime\left(\bx_{i,\cS}^\top \bbeta_\cS^* + t(\bx_{i,\cD}^\top \bar{\bbeta}_\cD - \bx_{i,\cS}^\top \bbeta_\cS^*) \right) \{\bx_{i,\cD}^\top \bar{\bbeta}_\cD - \bx_{i,\cS}^\top \bbeta_\cS^*\}^2 ,
\end{align*}
for some $t \in (0,1)$.
Due to Assumption \ref{assumption: glm- features and params }\ref{item: feature-param bound} and Assumption \ref{assumption: glm- features and params }\ref{item: minimum KL param bound}, we get \[\abs{\bx_{i,\cS}^\top \bbeta_\cS^* + t(\bx_{i,\cD}^\top \bar{\bbeta}_\cD - \bx_{i,\cS}^\top \bbeta_\cS^*)}\leq x_0(R_0 + R).\]
Finally, strong convexity and smoothness of $b(\cdot)$ (Assumption \ref{assumption: glm-b function}\ref{item: strong convexity}, \ref{assumption: glm-b function}\ref{item: b-boundedness}), we have
\begin{equation}
\label{eq: delta_kl and delta_para comparison}
 B \Delta_\param(\cD) \geq \Delta_\kl(\cD) \geq \psi(x_0 R_0 + x_0 R) \Delta_\param(\cD).
\end{equation}
This established the equivalence between $\Delta_\kl(\cD)$ and $\Delta_\param(\cD)$. Now, we present the main below.

\begin{theorem}[Sufficiency]
    \label{thm: sufficiency of BSS-GLM}
    % Let $c_\cT, c_\cG$ be arbitrary positive constants and $c_5,c_6$ be universal positive constants such that 
    % $ 1 + \log s < 0.5 \min\{2 c_\cT^2, c_4, c_5 c_\cG\} \log (ep) $ and
    Under Assumption 1, there exists a positive constant $C$ depending on $\phi,B, \tilde{B}, x_0, R , R_0, \kappa_0, M$ and the function $\psi(\cdot)$ such that for any $0\leq \eta <1$, whenever the identifiability 
    margin $\widetilde{\tau}_*(s)$ satisfies 
    \begin{equation}
    \label{eq: margin cond-glm}
    \begin{aligned}
&\frac{\widetilde{\tau}_*(s)}{\phi B} \geq\\ & \frac{C}{(1- \eta)^2} \max\left\{\max\left\{
\max_{\cI \subset \cS} \ccE_{\ctT_\cI^{(s)}}^2, \max_{\cI \subset \cS} \ccE_{\ctG_\cI^{(s)}}^2 
\right\}+ \sqrt{\frac{\log(es)\vee \log\log(ep)}{\log(ep)}}, t^{(1)}_{s,n,p}, t^{(2)}_{s,n,p}\right\} \frac{\log(ep)}{n}
\end{aligned}
    \end{equation}
    for a specified $t^{(1)}_{s,n,p} = O(s \{\log s \vee \log \log p\}/\log p)$ and $t^{(2)}_{s,n,p} = O(\frac{s^2(\log n)^2}{n \log p} + \frac{s^{3/2} (\log n)^{3/2}}{\sqrt{n}\log p})$,
    we have 
%  \[
% \pr(\widehat{\cS}_{\rm best} \neq \cS) \lesssim (ep)^{- c_\cT^2} + (ep)^{ -c_5 \Psi_\cG^2/2} + (ep)^{- c_4 \Psi_\cG^2/2}.
%  \]
 \[
\left\{
 \widehat{\cS}: \vert\widehat{\cS}\vert = s, \min_{\cS \in \ccA_s } \cL_{\widehat{\cS}}(\widehat{\bbeta}_{\widehat{\cS}})  \leq \cL_{\cS}(\widehat{\bbeta}_{\cS})+ n \eta \tilde{\tau}_*(s) 
 \right\} = \{\cS\},
 \]
 with probability at least $1 - O(\{s \vee \log p\}^{-1} + n^{-7} s \log p)$.
 In particular, setting $\eta = 0$, we have $\cS = \argmin_{\widehat{\cS}\in \ccA_s}  \cL_{\widehat{\cS}}(\widehat{\bbeta}_{\widehat{\cS}})$ with high probability.
% with $c_8$ being universal positive constants.
\end{theorem}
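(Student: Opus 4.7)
The plan is to mirror the argument of Theorem 1 by replacing closed-form OLS identities with a Taylor expansion of $\cL_\cD$ around the KL-projection $\bar{\bbeta}_\cD$. Define the centered response vector $\bxi := \by - (b^\prime(\bx_{i,\cS}^\top \bbeta_\cS^*))_{i \in [n]}$; by \eqref{eq: MGF glm}, its entries are mean-zero and sub-Gaussian with parameter $\sqrt{\phi B}$. The first-order condition defining $\bar{\bbeta}_\cD$ gives $\nabla \cL_\cD(\bar{\bbeta}_\cD) = -2n^{-1}\bX_\cD^\top \bxi$, the direct analogue of the OLS residual identity that drove \eqref{eq: RSS diff}. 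Expanding $\cL_\cD(\widehat{\bbeta}_\cD)$ and $\cL_\cS(\widehat{\bbeta}_\cS)$ around their respective KL-projections and using $\bar{\bbeta}_\cS = \bbeta_\cS^*$, I would decompose $n^{-1}\{\cL_\cD(\widehat{\bbeta}_\cD) - \cL_\cS(\widehat{\bbeta}_\cS)\}$ as the signal $\Delta_\kl(\cD)$, minus a linear noise term $2n^{-1}\bxi^\top (\bX_\cD \bar{\bbeta}_\cD - \bX_\cS \bbeta_\cS^*)$, minus a quadratic form in $\bxi$ arising from the MLE optimality gaps at $\cD$ and $\cS$ whose leading part, after substituting the Newton step $\widehat{\bbeta}_\cD - \bar{\bbeta}_\cD \approx (\bX_\cD^\top \bLambda_\cD \bX_\cD)^{-1}\bX_\cD^\top \bxi$ and the implicit $\bLambda^{1/2}$-reweighting that defines $\widetilde{\bP}_\cD$, is an order-two chaos indexed by $\ctG_\cI^{(s)}$, plus cubic and higher-order Taylor remainders in $\widehat{\bbeta}_\cD - \bar{\bbeta}_\cD$ and $\widehat{\bbeta}_\cS - \bbeta_\cS^*$.

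With this decomposition in hand, the two leading stochastic terms are controlled exactly as in Section \ref{sec: sufficiency of BSS}, with $\sigma$ replaced by $\sqrt{\phi B}$. The linear term, normalized by $\norm{\bX_\cD \bar{\bbeta}_\cD - \bX_\cS \bbeta_\cS^*}_2$, is the supremum of a sub-Gaussian process indexed by $\ctT_\cI^{(s)}$, so Borell--TIS as in \eqref{eq: linear deviation 2.com} bounds it by $c_1(\ccE_{\ctT_\cI^{(s)}} + 2 c_\cT)\sqrt{\phi B\, k\log(ep)/n}$ uniformly over $\ccA_\cI$. The quadratic piece is an order-two chaos over $\ctG_\cI^{(s)}$; Theorem \ref{thm: order-2 chaos deviation bound} combined with Hanson--Wright for the centering $\bxi^\top \widetilde{\bP}_{\cS \mid \cI}\bxi$, mirroring \eqref{eq: quad deviation 5}, controls it by $c_2(\ccE_{\ctG_\cI^{(s)}}^2 + c_\cG)\phi B\, k\log(ep)/n$. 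Union-bounding across $\cI \subset \cS$ and $k \in [s]$ and choosing $c_\cT, c_\cG$ as in Theorem 1 reproduces the first two entries inside the maximum of \eqref{eq: margin cond-glm} together with the additive $\sqrt{\{\log(es)\vee \log\log(ep)\}/\log(ep)}$ slack.

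The main technical obstacle is the uniform handling of the Taylor remainders across the $\binom{p}{s}$ submodels $\cD \in \ccA_s$. Two ingredients are required: first, a uniform MLE rate $\max_{\cD \in \ccA_s}\norm{\widehat{\bbeta}_\cD - \bar{\bbeta}_\cD}_2 = O_p(\sqrt{s\log(ep)/n})$, established from local strong convexity of $\cL_\cD$ (Assumption \ref{assumption: glm-b function}\ref{item: strong convexity} combined with Assumption \ref{assumption: glm- features and params }\ref{item: feature-param bound}, \ref{item: sparse eigenvalue}, \ref{item: minimum KL param bound}) together with a standard peeling/net argument over the $\binom{p}{s}$ submodels and the sub-Gaussian tail of $n^{-1}\bX_\cD^\top \bxi$; second, a uniform cubic Taylor bound using Assumption \ref{assumption: glm- features and params }\ref{item: third moment condition} together with the bound on $b^{\tprime}$ from Assumption \ref{assumption: glm-b function}\ref{item: b-boundedness}. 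The leading quadratic remainder (after linearizing the Newton step) contributes the $t^{(1)}_{s,n,p} = O(s\{\log s \vee \log\log p\}/\log p)$ term, while the cubic remainder contributes $t^{(2)}_{s,n,p}$; both are absorbed into the margin condition \eqref{eq: margin cond-glm}. Once these uniform controls are in place, the remainder of the argument is a direct transcription of the Theorem 1 proof with $\sigma \to \sqrt{\phi B}$, and the stated failure probability $O(\{s \vee \log p\}^{-1} + n^{-7} s \log p)$ arises from combining the chaos tails with the MLE-consistency tails.
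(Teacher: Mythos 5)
Your architecture is the same as the paper's: Taylor-expand $\cL_\cD$ around the KL projection $\bar{\bbeta}_\cD$, use the first-order condition to reduce the score to $-2n^{-1}\bX_\cD^\top\bxi$, bound the linear term by a sub-Gaussian supremum over $\ctT_\cI^{(s)}$, the quadratic term by an order-two chaos over $\ctG_\cI^{(s)}$, and absorb Taylor remainders through a uniform MLE rate. But you misidentify the source of $t^{(1)}_{s,n,p}$, and this is not cosmetic. In the linear model the common-support contribution cancels exactly, $\bvarepsilon^\top(\bP_\cD-\bP_\cS)\bvarepsilon=\bvarepsilon^\top(\bP_{\cD\mid\cI}-\bP_{\cS\mid\cI})\bvarepsilon$, which is why only the chaos over $\cG_\cI^{(s)}$ appears there. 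In the GLM the two quadratic forms carry different weight matrices $\bLambda_\cD$ and $\bLambda_\cS$, so after sandwiching $\bX_\cD(\widetilde{\bX}_\cD^\top\widetilde{\bX}_\cD)^{-1}\bX_\cD^\top$ between $B^{-1}\bP_\cD$ and $\psi_*^{-1}\bP_\cD$ the pieces $n^{-1}\bxi^\top\bLambda_\cD^{-1/2}\widetilde{\bP}_{\cI\mid\cD}\bLambda_\cD^{-1/2}\bxi$ and $n^{-1}\bxi^\top\bLambda_\cS^{-1/2}\widetilde{\bP}_{\cI\mid\cS}\bLambda_\cS^{-1/2}\bxi$ do \emph{not} cancel; their difference is of order $(\psi_*^{-1}-B^{-1})\,n^{-1}\bepsilon^\top\bP_\cI\bepsilon$, a rank-$(s-k)$ quadratic form that a Hanson--Wright bound controls at level $s\{\log(es)\vee\log\log(ep)\}/n$, and this is exactly $t^{(1)}_{s,n,p}\cdot\log(ep)/n$. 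It is not a Newton-step linearization error. As written, your decomposition leaves this term uncontrolled: it is not indexed by $\ctG_\cI^{(s)}$, it is not a Taylor remainder, and none of the tools you list covers it.

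A second, smaller mismatch: your uniform rate $\max_{\cD}\norm{\widehat{\bbeta}_\cD-\bar{\bbeta}_\cD}_2=O_p(\sqrt{s\log(ep)/n})$ cannot produce the stated form of $t^{(2)}_{s,n,p}$. The remainders enter as $\norm{\widehat{\bDelta}_\cD}_2^4$ and $\norm{\widehat{\bDelta}_\cD}_2^3$ (via Assumption S2.1(c) and the bound on $b^{\tprime}$), and only a rate of order $\sqrt{s\log n/n}$ --- which the paper obtains from a Hoeffding bound on $\norm{\nabla\cL_\cD(\bar{\bbeta}_\cD)}_\infty$ at level $\sqrt{(\log n)/n}$, at the price of the $n^{-7}s\log p$ term in the failure probability --- yields $t^{(2)}_{s,n,p}=O\bigl(s^2(\log n)^2/(n\log p)+s^{3/2}(\log n)^{3/2}/(\sqrt{n}\log p)\bigr)$ after dividing by $\log(ep)/n$. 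With your rate the corresponding remainder is of order $s^2\log p/n$, which is a different (and generally larger) quantity than the one the theorem asserts. Either adopt the $\sqrt{s\log n/n}$ rate or restate $t^{(2)}$ accordingly.
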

The proof of the above theorem is deferred to Section \ref{sec: proof GLM}. The above theorem is the generalization of Theorem 1, and \eqref{eq: margin cond-glm} also involves the two complexities related to the sets of residualized signals and spurious projection operators. However, condition \eqref{eq: margin cond-glm} also involves two extra terms $t^{(1)}_{s,n,p}$ and $t^{(2)}_{s,n,p}$, the exact forms of which can be found in Section \ref{sec: proof GLM}. It can be shown that both of these terms are exactly 0 for linear models as $\psi \equiv 1, B = 1$ and $\tilde{B}=0$. 
\begin{remark}
If $p = \Omega(e^{c_0 n})$ for some universal constant $c_0>0$ and $s (\log n)/n \to 0$ as $n \to \infty$, then both  $t^{(1)}_{s,n,p}$ and $t^{(2)}_{s,n,p}$ are negligible compared to the complexity term in \eqref{eq: margin cond-glm}. Hence, in this case, we witness roughly a similar phenomenon involving the two complexities as in the linear model.
\end{remark}

\section{Proof of main results under GLM model}
\label{sec: proof GLM}
Let $\cD \in \ccA_{s}$ such that $\cS \cap \cD = \cI$.
\subsubsection*{Strong convexity}
We will start by showing the strong convexity of $\cL_\cD(\Tilde{\bbeta}; \{\bx_i, y_i\}_{i \in [n]})$. For ease of presentation we will just write $\cL_\cD(\Tilde{\bbeta})$ instead of $\cL_\cD(\Tilde{\bbeta}; \{\bx_i, y_i\}_{i \in [n]})$. Given any $r\in(0,R_0 \wedge R]$ and $\bDelta \in \bbB_1(\mathbf{0}, r)$ define the function

\begin{align*}
\delta \cL_\cD(\bar{\bbeta}_\cD + \bDelta; \bar{\bbeta}_\cD) & := \cL_\cD(\bar{\bbeta}_\cD + \bDelta) - \cL_\cD(\bar{\bbeta}_\cD ) - \nabla \cL_\cD(\bar{\bbeta}_\cD)^\top \bDelta\\
& = \frac{1}{2}\bDelta^\top \nabla^2\cL_\cD(\bar{\bbeta}_\cD + t \bDelta) \bDelta \quad \text{(for some $t \in (0,1)$)}\\
& = \frac{1}{n} \sum_{i=1}^n b^\dprime (\bx_{i,\cD}^\top (\bar{\bbeta}_\cD  + t \bDelta)) (\bx_{i,\cD}^\top \bDelta)^2\\
& \geq  \psi(x_0 R + x_0 r) \kappa_0 \norm{\bDelta}_2^2 \quad \text{(Using Assumption \ref{assumption: glm- features and params }\ref{item: feature-param bound}, \ref{assumption: glm- features and params }\ref{item: sparse eigenvalue}, \ref{assumption: glm- features and params }\ref{item: minimum KL param bound})}\\
& \geq \psi(x_0 R + x_0 R_0)\kappa_0 \norm{\bDelta}_2^2 
\end{align*}

\subsubsection*{Rate of convergence}
Construct an intermediate estimator $\widehat{\bbeta}_{\cD, \alpha} = \bar{\bbeta}_\cD + \alpha (\widehat{\bbeta}_\cD - \bar{\bbeta}_\cD)$ where 
$$\alpha = \min\left\{1, \frac{r}{\Vert\widehat{\bbeta}_\cD - \bar{\bbeta}_\cD\Vert_2}\right\},$$
where $r$ will be chosen later.

Write $\widehat{\bbeta}_{\cD, \alpha} - \bar{\bbeta}_\cD$ as $\bDelta_\alpha$ and note that 
\[
% \frac{\psi(x_0 R)}{s} \norm{\bDelta_\alpha}_1^2 \leq 
\psi(x_0 R + x_0 R_0) \norm{\bDelta_\alpha}_2^2 \leq \delta\cL_\cD(\widehat{\bbeta}_{\cD, \alpha}, \bar{\bbeta}_\cD) \leq - \nabla \cL_\cD(\bar{\bbeta}_\cD)^\top \bDelta_\alpha \leq \norm{\nabla\cL_\cD(\bar{\bbeta}_\cD)}_2
\norm{\bDelta_\alpha}_2 . 
\]
Hence we have 
\begin{equation}
    \label{eq: alpha-est bound}
    \norm{\bDelta_\alpha}_2 \leq \frac{\norm{\nabla\cL_\cD(\bar{\bbeta}_\cD)}_2}{\psi(x_0 R + x_0 R_0)} \leq \frac{\sqrt{s} \norm{\nabla\cL_\cD(\bar{\bbeta}_\cD)}_\infty}{\psi(x_0 R + x_0 R_0)}.
\end{equation}
Now, note that
\begin{align*}
    \nabla\cL_\cD(\bar{\bbeta}_\cD) &:= -\frac{2}{n} \sum_{i \in [n]}\{ y_i  - b^\prime(\bx_{i,\cD}^\top \bar{\bbeta}_{\cD}) \} \bx_{i, \cD}\\
    & = -\frac{2}{n} \sum_{i \in [n]}\{ y_i  - b^\prime(\bx_{i,\cS}^\top \bbeta_{\cS}^*) \} \bx_{i, \cD}
     -\frac{2}{n} \underbrace{\sum_{i \in [n]}\{ b^\prime(\bx_{i,\cS}^\top \bbeta_{\cS}^*)  - b^\prime(\bx_{i,\cD}^\top \bar{\bbeta}_{\cD}) \} \bx_{i, \cD}}_{=0}\\
     & = -\frac{2 (\phi B)^{1/2}}{n} \sum_{i \in [n]} \underbrace{\frac{\{ y_i  - b^\prime(\bx_{i,\cS}^\top \bbeta_{\cS}^*) \}}{(\phi B)^{1/2}}}_{:=\epsilon_i} \bx_{i, \cD}
\end{align*}
Note that $\bbE\{\exp(\lambda \epsilon_i [\bx_{i,\cD}]_j) \leq \exp(\lambda^2 x_0^2/2)\}$, i.e., $\epsilon_i [\bx_{i,\cD}]_j$ is sub-Gaussian with parameter $x_0$. Hence, by an application of union bound and Hoeffding's inequality we have
\begin{equation}
    \label{eq: bound on gradient}
    \pr \left( \norm{\nabla \cL_\cD(\bar{\bbeta}_\cD)}_\infty \geq 2 t x_0 (\phi B)^{1/2}  \right) \leq 2 s \exp\left( -\frac{n t^2}{2}\right). 
\end{equation}

Setting $t = 4 (\log n/n)^{1/2}$ in \eqref{eq: bound on gradient} we get 
\begin{equation}
    \label{eq: bound on gradient 2}
    \pr \left( \norm{\nabla \cL_\cD(\bar{\bbeta}_\cD)}_\infty \geq 8  x_0 (\phi B)^{1/2} \sqrt{\frac{\log n}{n}} \right) \leq \frac{2}{n^7}. 
\end{equation}
Using the above fact and \eqref{eq: alpha-est bound} we finally get that with probability at least $1-2n^{-7}$ the following holds:
\[
\norm{\bDelta_\alpha}_2 \leq \frac{8  x_0 (\phi B)^{1/2}}{\psi(x_0 R + x_0 R_0)}  \sqrt{\frac{s\log n}{n}}.
\]
Now we set $r = \frac{9  x_0 (\phi B)^{1/2}}{\psi(x_0 R)}  \sqrt{\frac{s \log n}{n}} $. Hence, we have $\norm{\bDelta_\alpha}_2 <r$, i.e., $\norm{\bDelta}_2 <r$. This shows that 
\begin{equation}
\label{eq: estimation bound for beta_D}
    \pr \left( \max_{\cD\in \ccA_s} \norm{\widehat{\bbeta}_\cD - \bar{\bbeta}_\cD}_2 > \frac{9  x_0 (\phi B)^{1/2}}{\psi(x_0 R + x_0 R_0)}  \sqrt{\frac{s \log n}{n}} \right) \leq \frac{4 s \log p}{n^7}. 
\end{equation}

By a similar argument, it can be shown that
\begin{equation}
    \label{eq: estimation bound for beta_S}
    \pr \left(\norm{\widehat{\bbeta}_\cS - {\bbeta}_\cS^*}_2 > \frac{9  x_0 (\phi B)^{1/2}}{\psi( x_0 R + x_0 R_0 )}  \sqrt{\frac{s \log n}{n}} \right) \leq \frac{2}{n^7}. 
\end{equation}

\subsection*{Expansion of likelihood estimate}
Now that we have determined the rate of estimation, we can now write $\widehat{\bbeta}_\cD$ in terms of $\bar{\bbeta}_\cD$. To see this, note that 
\begin{equation*}
    %\label{eq: expansion of beta_hat}
    \begin{aligned}
        \mathbf0 = \nabla\cL_\cD(\widehat{\bbeta}_\cD) = \nabla\cL_\cD(\bar{\bbeta}_\cD) +  \nabla^2 \cL_\cD(\bar{\bbeta}_\cD) (\widehat{\bbeta}_\cD - \bar{\bbeta}_\cD) + \bR_\cD (\widehat{\bbeta}_\cD - \bar{\bbeta}_\cD)^{\otimes 2},
    \end{aligned}
\end{equation*}
where $\bR_\cD = (1/2)  \nabla^3\cL_{\cD}(\bar{\bbeta}_{\cD} + t_\cD(\widehat{\bbeta}_\cD - \bar{\bbeta}_\cD))$ for some $t_\cD \in (0,1)$.
Thus, we have
\begin{equation}
    \label{eq: beta_D hat expansion}
    \widehat{\bbeta}_\cD = \bar{\bbeta}_\cD -  [\nabla^2 \cL_\cD(\bar{\bbeta}_\cD)]^{-1} \left( \nabla \cL_\cD(\bar{\bbeta}_\cD) + \bR_\cD (\widehat{\bbeta}_\cD - \bar{\bbeta}_\cD)^{\otimes 2} \right)
\end{equation}
\subsubsection*{Higher order Taylor's expansion of loss function}
Now we are ready to analyze the loss functions. We do so by expanding the Taylor series of the loss function. Write $\widehat{\bbeta}_\cD - \bar{\bbeta}_\cD$ ad $\widehat{\bDelta}_\cD$. Then, using \eqref{eq: beta_D hat expansion} we have

\begin{align*}
&\cL_\cD(\widehat{\bbeta}_\cD) \\
&= \cL_\cD(\bar{\bbeta}_\cD) + \nabla \cL_\cD(\bar{\bbeta}_\cD)^\top \widehat{\bDelta}_\cD + \frac{1}{2} \widehat{\bDelta}_\cD^\top \nabla^2 \cL_\cD(\bar{\bbeta}_\cD) \widehat{\bDelta}_\cD + (1/3) \widehat{\bDelta}_\cD^\top \tilde{\bR}_\cD (\widehat{\bDelta}_\cD \otimes \widehat{\bDelta}_\cD)\\
& = \cL_\cD(\bar{\bbeta}_\cD) - \nabla \cL_\cD(\bar{\bbeta}_\cD)^\top [\nabla^2 \cL_\cD(\bar{\bbeta}_\cD)]^{-1} \left(\nabla \cL_\cD(\bar{\bbeta}_\cD) + \tilde{\bR}_\cD (\widehat{\bbeta}_\cD  - \bar{\bbeta}_\cD)^{\otimes 2} \right)\\
& \quad + (1/2)\left(\nabla \cL_\cD(\bar{\bbeta}_\cD) + \tilde{\bR}_\cD (\widehat{\bbeta}_\cD  - \bar{\bbeta}_\cD)^{\otimes 2} \right)^\top [\nabla^2\cL_\cD(\bar{\bbeta}_\cD)]^{-1}  \left(\nabla \cL_\cD(\bar{\bbeta}_\cD) + \tilde{\bR}_\cD (\widehat{\bbeta}_\cD  - \bar{\bbeta}_\cD)^{\otimes 2} \right)\\
& \quad + (1/3) \widehat{\bDelta}_\cD^\top \tilde{\bR}_\cD (\widehat{\bDelta}_\cD \otimes \widehat{\bDelta}_\cD)\\
& = \cL_\cD(\bar{\bbeta}_\cD) - \frac{1}{2}\nabla \cL_\cD(\bar{\bbeta}_\cD)^\top [\nabla^2 \cL_\cD(\bar{\bbeta}_\cD)]^{-1} \nabla \cL_\cD(\bar{\bbeta}_\cD) + \frac{1}{2} (\tilde{\bR}_\cD \widehat{\bDelta}_\cD^{\otimes 2})^\top [\nabla^2 \cL_\cD(\bar{\bbeta}_\cD)]^{-1} (\tilde{\bR}_\cD \widehat{\bDelta}_\cD^{\otimes 2})\\
& \quad + (1/3) \widehat{\bDelta}_\cD^\top \tilde{\bR}_\cD (\widehat{\bDelta}_\cD \otimes \widehat{\bDelta}_\cD)\\
& = \frac{2}{n}\sum_{i \in [n]} \{-y_i (\bx_{i,\cD}^\top \bar{\bbeta}_\cD) + b(\bx_{i,\cD}^\top \bar{\bbeta}_\cD)\} - \frac{1}{n}(\by - \brho(\bX_\cD \bar{\bbeta}_\cD))^\top \bX_\cD (\widetilde{\bX}_\cD^\top \widetilde{\bX}_{\cD})^{-1} \bX_\cD^\top (\by- \brho(\bX_\cD \bar{\bbeta}_\cD))\\
& \quad + \frac{1}{2} (\tilde{\bR}_\cD \widehat{\bDelta}_\cD^{\otimes 2})^\top [\nabla^2 \cL_\cD(\bar{\bbeta}_\cD)]^{-1} (\tilde{\bR}_\cD \widehat{\bDelta}_\cD^{\otimes 2}) + (1/3) \widehat{\bDelta}_\cD^\top \tilde{\bR}_\cD (\widehat{\bDelta}_\cD \otimes \widehat{\bDelta}_\cD)\\
& = - \frac{2}{n} \brho(\bX_\cS \bbeta_\cS^*)^\top \bX_\cD \bar{\bbeta}_\cD + \frac{2}{n}\sum_{i \in [n]} b(\bx_{i,\cD}^\top \bar{\bbeta}_\cD)  -\frac{2}{n}(\by - \brho(\bX_\cS \bbeta_\cS^*))^\top \bX_\cD \bar{\bbeta}_\cD\\
& \quad - \frac{1}{n}(\by - \brho(\bX_\cS \bbeta_\cS^*))^\top \bX_\cD (\widetilde{\bX}_\cD^\top \widetilde{\bX}_{\cD})^{-1} \bX_\cD^\top (\by- \brho(\bX_\cS \bbeta_\cS^*)) \\
& \quad + \frac{1}{2} (\tilde{\bR}_\cD \widehat{\bDelta}_\cD^{\otimes 2})^\top [\nabla^2 \cL_\cD(\bar{\bbeta}_\cD)]^{-1} (\tilde{\bR}_\cD \widehat{\bDelta}_\cD^{\otimes 2}) + (1/3) \widehat{\bDelta}_\cD^\top \tilde{\bR}_\cD (\widehat{\bDelta}_\cD \otimes \widehat{\bDelta}_\cD),
\end{align*}
where $\tilde{\bR}_\cD = (1/2)  \nabla^3\cL_{\cD}(\bar{\bbeta}_{\cD} + \tilde{t}_\cD(\widehat{\bbeta}_\cD - \bar{\bbeta}_\cD))$ for some $\tilde{t}_\cD \in (0,1)$.

Thus, we have the following:
\begin{align*}
& \cL_\cD(\widehat{\bbeta}_\cD) - \cL_\cS(\widehat{\bbeta}_\cS)\\
& = \Delta_\kl(\cD) - \underbrace{\frac{2}{n}(\by - \brho(\bX_\cS \bbeta^*_\cS))^\top (\bX_\cD\bar{\bbeta}_\cD - \bX_\cS \bbeta_\cS^*)}_{\text{linear term}}\\
& \quad - \underbrace{\frac{1}{n} (\by - \brho(\bX_\cS \bbeta_\cS^*))^\top \left\{\bX_\cD (\widetilde{\bX}_\cD^\top \widetilde{\bX}_{\cD})^{-1} \bX_\cD^\top - \bX_\cS (\widetilde{\bX}_\cS^\top \widetilde{\bX}_{\cS})^{-1} \bX_\cS^\top \right\} (\by- \brho(\bX_\cS \bbeta_\cS^*))}_{\text{quadratic term}}\\
& \quad + \frac{1}{2} (\tilde{\bR}_\cD \widehat{\bDelta}_\cD^{\otimes 2})^\top [\nabla^2 \cL_\cD(\bar{\bbeta}_\cD)]^{-1} (\tilde{\bR}_\cD \widehat{\bDelta}_\cD^{\otimes 2}) + (1/3)\widehat{\bDelta}_\cD^\top \tilde{\bR}_\cD (\widehat{\bDelta}_\cD \otimes \widehat{\bDelta}_\cD) \\
& \quad  - \frac{1}{2} (\tilde{\bR}_\cS \widehat{\bDelta}_\cS^{\otimes 2})^\top [\nabla^2 \cL_\cS(\bbeta_\cS^*)]^{-1} (\tilde{\bR}_\cS \widehat{\bDelta}_\cS^{\otimes 2}) - (1/3) \widehat{\bDelta}_\cD^\top \tilde{\bR}_\cD (\widehat{\bDelta}_\cD \otimes \widehat{\bDelta}_\cD). 
\end{align*}
Write $\psi_* = \min\{\psi(x_0 R), \psi(x_0 R_0)\}$. By definition of $\widetilde{\bX}_\cD$, we have 
\[
\widetilde{\bX}_\cD^\top \widetilde{\bX}_\cD = \bX_\cD^\top \bLambda_\cD\bX_\cD \succeq \psi_* \bX_\cD^\top \bX_\cD, \quad \text{where}\quad \bLambda_\cD = \mathbf{diag}(b^\dprime(\bx_{1,\cD}^\top \bar{\bbeta}_\cD), \ldots, b^\dprime(\bx_{n,\cD}^\top \bar{\bbeta}_\cD)).
\]
Hence, we have $(\widetilde{\bX}_\cD^\top \widetilde{\bX}_\cD)^{-1} \preceq \frac{1}{\psi_* } (\bX_\cD^\top \bX_\cD)^{-1} \Rightarrow \bX_\cD (\widetilde{\bX}_\cD^\top \widetilde{\bX}_{\cD})^{-1}\bX_\cD ^\top\preceq  \frac{1}{\psi_*} \bP_\cD$. Similarly, $\bX_\cS (\widetilde{\bX}_\cS^\top \widetilde{\bX}_{\cS})^{-1}\bX_\cS \succeq \frac{1}{B}\bP_\cS$. Also, recall that $\widetilde{\bP}_\cD = \widetilde{\bX}_\cD(\widetilde{\bX}_\cD^\top \widetilde{\bX}_{\cD})^{-1}\widetilde{\bX}_\cD^\top$ for any $\cD \in \ccA_s \cup \{\cS\}$. Let $\widetilde{\bP}_{\cI\mid \cD}$ be the orthogonal projector onto the $\col([\widetilde{\bX}_\cD]_\cI)$.
Using these facts, for any $\eta \in [0,1)$, the difference between the two losses can be lower bounded as follows:

\begin{equation}
    \label{eq: loss-diff lower bound}
    \begin{aligned}
    & \cL_\cD(\widehat{\bbeta}_\cD) - \cL_\cS(\widehat{\bbeta}_\cS)\\
    & \geq \eta \Delta_\kl(\cD)\\
    & \quad + 2^{-1}(1-\eta)\Delta_\kl(\cD) - \frac{2}{n}(\by - \brho(\bX_\cS \bbeta^*_\cS))^\top (\bX_\cD\bar{\bbeta}_\cD - \bX_\cS \bbeta_\cS^*)\\
    & \quad + 2^{-1}(1-\eta) - \frac{1}{n} (\by - \brho(\bX_\cS \bbeta^*_\cS))^\top \bLambda_{\cD}^{-1/2} (\widetilde{\bP}_\cD - \widetilde{\bP}_{\cI \mid \cD}) \bLambda_{\cD}^{-1/2}(\by - \brho(\bX_\cS \bbeta^*_\cS))\\
    & \quad + \underbrace{\frac{1}{n} (\by - \brho(\bX_\cS \bbeta^*_\cS))^\top \bLambda_\cS^{-1/2}(\widetilde{\bP}_\cS - \widetilde{\bP}_{\cI \mid \cS})\bLambda_\cS^{-1/2} (\by - \brho(\bX_\cS \bbeta^*_\cS))}_{\geq 0}\\
    & \quad - \frac{1}{n} (\by - \brho(\bX_\cS \bbeta^*_\cS))^\top \bLambda_\cD^{-1/2} \widetilde{\bP}_{\cI\mid \cD} \bLambda_\cD^{-1/2}(\by - \brho(\bX_\cS \bbeta^*_\cS))\\
    & \quad + \frac{1}{n} (\by - \brho(\bX_\cS \bbeta^*_\cS))^\top \bLambda_\cS^{-1/2} \widetilde{\bP}_{\cI \mid \cS} \bLambda_\cS^{-1/2}(\by - \brho(\bX_\cS \bbeta^*_\cS))\\
    & \quad - \frac{\Tilde{B}^2 M^2}{4 \kappa_0 \psi_*} \norm{\widehat{\bDelta}_\cD}_2^4 - \frac{\Tilde{B}^2 M^2}{4 \kappa_0 \psi_*} \norm{\widehat{\bDelta}_\cS}_2^4 - \frac{\Tilde{B} M}{6} \norm{\widehat{\bDelta}_\cD}_2^3
    - \frac{\Tilde{B} M}{6} \norm{\widehat{\bDelta}_\cS}_2^3.   
    \end{aligned}
\end{equation}

Now recall that
\[
\ctT_\cI^{(s)} = \left\{ \frac{\bX_\cD\bar{\bbeta}_\cD - \bX_\cS \bbeta_\cS^*}{\norm{\bX_\cD\bar{\bbeta}_\cD - \bX_\cS \bbeta_\cS^*}_2}  : \cD \in \ccA_\cI\right\}
\]

\[
\ctG_\cI^{(s)} = \left\{ \widetilde{\bP}_\cD - \widetilde{\bP}_{\cI\mid \cD} : \cD \in \ccA_\cI\right\}
\]

% \[
% \cG_\cI^\glm = \left\{ \bX_\cD (\widetilde{\bX}_\cD^\top \widetilde{\bX}_{\cD})^{-1} \bX_\cD^\top - \bX_\cS (\widetilde{\bX}_\cS^\top \widetilde{\bX}_{\cS})^{-1} \bX_\cS^\top  : \cD \in \ccA_\cI\right\}
% \]

Now we will handle the linear and the quadratic terms separately. We assume that $\abs{\cI} = s-k$, where $1\leq k\leq s$. 

\subsection*{Analysis of likelihood lower bound}
\subsubsection*{Analysis of linear term}
To analyze the linear term we will use the deviation bound for the supremum of the sub-Gaussian process. In particular, we will use Theorem 5.36 of \cite{wainwright2019high}. First, note that $\diam(\ctT_\cI^{(s)}) \leq \sqrt{2}$ and recall $\bepsilon = (\epsilon_1, \ldots, \epsilon_n)^\top$, where $\epsilon_i = \frac{\{ y_i  - b^\prime(\bx_{i,\cS}^\top \bbeta_{\cS}^*) \}}{(\phi B)^{1/2}}$. Due to 1-sub-Gaussianity, we have $\max_{i \in [n]} \var(\epsilon_i) \leq \sigma^2_\epsilon$ for some universal constant $\sigma_\epsilon>0$. Also, recall that 
\[
\widehat{\bsr}_\cD = \frac{\bX_\cD\bar{\bbeta}_\cD - \bX_\cS \bbeta_\cS^*}{\norm{\bX_\cD\bar{\bbeta}_\cD - \bX_\cS \bbeta_\cS^*}_2}.
\]
Thus, using the aforementioned theorem we get
\begin{equation}
\label{eq: glm lin proc bound}
\begin{aligned}
&\pr \left\{\max_{\cD \in \ccA_\cI} \widehat{\bsr}_\cD^\top\bepsilon \geq A_1 ( \ccE_{\ctT_\cI^{(s)}} \sqrt{k \log(ep)} + \sqrt{2 k \{\log(es)\vee\log \log (ep)\}}  )\right\}\\
& \leq 3 \{(es) \vee \log(ep)\}^{-2k},
\end{aligned}
\end{equation}
for some universal constant $A_1>0$.

\subsubsection*{Analysis of quadratic terms}
% We write $\widetilde{\bP}_\cD$ instead of $\bX_\cD (\widetilde{\bX}_\cD^\top \widetilde{\bX}_{\cD})^{-1} \bX_\cD^\top - \bX_\cS (\widetilde{\bX}_\cS^\top \widetilde{\bX}_{\cS})^{-1} \bX_\cS^\top$. Now, we will use Theorem 3.1 of \cite{krahmer2014suprema} to bound the supremum of the quadratic process.
% First, note that 
% \[
% d_{2 \to 2}(\cG_\cI^\glm) := \sup_{\cD \in \ccA_\cI} \norm{\widetilde{\bP}_\cD}_\op \leq (\psi_*^{-1} - B^{-1}).
% \]
% Also, by Assumption {\color{red} ??} we have 
% \[
% d_F(\cG_\cI^\glm):= \sup_{\cD \in \ccA_\cI} \norm{\widetilde{\bP}_\cD}_F  \leq r_0 \sqrt{\log(ep)},
% \]
% where $r_0$ is a constant.
% Then by Theorem 3.1 of \cite{krahmer2014suprema}, we have 
% \[
% \pr \left(
% j
% \right)
% \]

Now, we focus on the quadratic terms.
Define the random vector $\bxi_\cD := (\xi_{1,\cD}, \ldots, \xi_{n,\cD})$, where 
\[
\xi_i = \frac{\{y_i - b^\prime(\bx_{i,\cS}^\top \bbeta_\cS^*)\} }{(\phi B \psi_*^{-1})^{1/2} \sqrt{b^\dprime(\bx_{i,\cD}^\top \bar{\bbeta}_\cD)}}, \quad i \in [n].
\]
Note that $\{\xi_{i,\cD}\}_{i \in [n]}$ are independent 1-sub-Gaussian.
We will study the random quantity $Q_{\ccA_\cI}:=\max_{\cD \in \ccA_{\cI}} \bxi_\cD^\top (\widetilde{\bP}_\cD - \widetilde{\bP}_{\cI \mid \cD})\bxi_\cD$. Let us assume that $\max_{i \in [n]}\var(\xi_{i,\cD}) = \sigma_\cD^2$. First, we note that $\widetilde{\bP}_\cD - \widetilde{\bP}_{\cI\mid \cD}$ is a projection matrix of rank $k$ and hence it is idempotent. Also note that $\bbE\left\{ \bxi_\cD^\top (\widetilde{\bP}_\cD - \widetilde{\bP}_{\cI \mid \cD})\bxi_\cD \right\} = \tr\left\{ (\widetilde{\bP}_\cD - \widetilde{\bP}_{\cI \mid \cD}) \bbE(\bxi_\cD \bxi_\cD^\top)\right\} = k \sigma_\cD^2 \leq k \sigma^2_0$, where $\sigma_0^2$ is a universal constant. Now, to bound $Q_{\ccA_s}$ we will use Theorem \ref{thm: order-2 chaos deviation bound}. By the properties of projection matrices, we have $d_\op (\ctG_\cI^{(s)}) =1 $ and $d_F(\ctG_\cI^{(s)}) = \sqrt{k}$. Hence, equipped with Assumption \ref{assumption: glm- features and params }\ref{item: glm noisy features not too correlated}, the quantities $M, V$ and $U$ (defined in Theorem \ref{thm: order-2 chaos deviation bound}) has the following properties:
\[
M \leq 2 \ccE_{\ctG_\cI^{(s)}}^2 k \log(ep), \quad V \leq 2 \sqrt{k \log (ep)}, \quad \text{and} \quad U =1.
\]
Due to Theorem \ref{thm: order-2 chaos deviation bound}, there exists a universal positive constant $A_3$, such that  for $$t = A_3 k \sqrt{\log(ep)\{\log (es) \vee \log \log (ep)\}},$$ we get
\begin{align*}
&\pr\left( C_{\ccA_\cI}(\bxi_\cD) \geq  A_2 \ccE_{\ctG_\cI^{(s)}}^2 k \log(ep) + A_ 3 k \sqrt{\log(ep)\{\log (es) \vee \log \log (ep)\}} \right) \\
& \leq \{ (es)\vee \log (ep)\}^{-2 k},
\end{align*}
for a universal positive constant $A_2$.
As $\max_{\cD \in \ccA_{\cI}} \bbE\{\bxi^\top (\widetilde{\bP}_\cD - \widetilde{\bP}_{\cI \mid \cD})\bxi\} \leq  k \sigma_0^2 \leq k \sigma_0^2 \ccE_{\ctG_\cI^{(s)}}^2 \log (ep)$, we finally have 
\begin{equation}
    \label{eq: glm quad proc bound}
    \begin{aligned}
    &\pr \left( Q_{\ccA_s} \leq A_4  \ccE_{\ctG_\cI^{(s)}}^2 k \log(ep) + A_ 3 k \sqrt{\log(ep)\{\log (es) \vee \log \log (ep)\}}\right)\\
    & \leq \{ (es) \vee \log (ep)\}^{- 2k},
    \end{aligned}
\end{equation}
where $A_4$ is a universal positive constant.

Next, by construction, we have 

\begin{align*}
& n^{-1} (\by - \brho(\bX_\cS \bbeta^*_\cS))^\top \bLambda_\cD^{-1/2} \widetilde{\bP}_{\cI\mid \cD} \bLambda_\cD^{-1/2}(\by - \brho(\bX_\cS \bbeta^*_\cS))\\
& = n^{-1} (\by - \brho(\bX_\cS \bbeta^*_\cS))^\top \bX_{\cI}(\widetilde{\bX}_\cI^\top \widetilde{\bX}_\cI)^{-1}\bX_{\cI}^\top (\by - \brho(\bX_\cS \bbeta^*_\cS))\\
& \preceq n^{-1} \psi_*^{-1} (\by - \brho(\bX_\cS \bbeta^*_\cS))^\top \bP_\cI (\by - \brho(\bX_\cS \bbeta^*_\cS)).
\end{align*}

Similarly, 
\[
n^{-1} (\by - \brho(\bX_\cS \bbeta^*_\cS))^\top \bLambda_\cD^{-1/2} \widetilde{\bP}_{\cI\mid \cD} \bLambda_\cD^{-1/2}(\by - \brho(\bX_\cS \bbeta^*_\cS)) \succeq n^{-1} B^{-1} (\by - \brho(\bX_\cS \bbeta^*_\cS))^\top \bP_\cI (\by - \brho(\bX_\cS \bbeta^*_\cS)).
\]

By Theorem 1 of \cite{rudelson2013hanson}, there exists a universal constant $A_5>0$ such that 
\[
\pr \left\{
\abs{\bepsilon^\top \bP_{\cI} \bepsilon - (s-k)\sigma_\epsilon^2}\geq t
\right\} \leq 2 \exp \left[ - A_5 \min \left\{\frac{t^2}{s-k}, t \right\} \right].
\]
For $t = 2 A_5^{-1} s \{\log(es)\vee\log \log (ep)\}$ and a universal constant $A_6>0$,  we get
\begin{equation}
    \label{eq: glm single quad deviation bound}
    \pr \left[ \bepsilon^\top \bP_\cI \bepsilon \geq A_6 s \max\{\log(es), \log \log (ep)\}\right] \leq \{\log(ep)\}^{-2s}\quad \text{for all $\cD\in \ccA_s \cup \{\cS\}$}.
\end{equation}

\subsubsection*{Final 0-1 error bound}
Define the event
\[
\Omega  = \left\{\max_{\cD\in \ccA_s \cup \{\cS\}} \norm{\widehat{\bbeta}_\cD - \bar{\bbeta}_\cD}_2 \leq \frac{9  x_0 (\phi B)^{1/2}}{\psi(x_0 R + x_0 R_0)}  \sqrt{\frac{s \log n}{n}} \right\}
\].

By \eqref{eq: estimation bound for beta_D} and \eqref{eq: estimation bound for beta_S} we have $\pr (\Omega^c) \lesssim (s \log p)/n^7$. Also, let $\cE$ be an event inside of which the assertion of the theorem holds. It follows that 
\begin{align*}
    \pr (\cE^c) & = \pr (\cE^c\cap \Omega) + \pr (\cE^c \cap \Omega^c)\\
    & \lesssim \left[\sum_{k=1}^s \sum_{\cI \subset \cS:\abs{\cI} = s-k} \pr \left(\min_{\cD \in \ccA_\cI} \cL_{\cD}(\widehat{\bbeta}_\cD) - \cL_\cS(\widehat{\bbeta}_\cS) < \eta \widetilde{\tau}_*(s) \right)\right] + \frac{s \log p}{n^7}
\end{align*}
 Now, assume the following:
 \begin{align*}
 %\tiny
     & \widetilde{\tau}_\glm(s)  :=  \min_{\cD\neq \cS, \abs{\cD}  = s} \min \left\{ \frac{\Delta_{\kl}^2(\cD)}{\Delta_\param(\cD) \abs{\cD \setminus \cS} }, \frac{\Delta_\kl(\cD)}{\abs{\cD \setminus \cS}}\right\}\\
     & \gtrsim \frac{(1\vee \psi_*^{-1})}{(1-\eta)^2}\max\left\{\mathsf{Comp}_1, \mathsf{Comp} _2, {\color{black} t_{s,n,p}^{(1)} } , t^{(2)}_{n,s,p}  \right\} \frac{(\phi B)\log (ep)}{n},
 \end{align*}
where 
\[
\mathsf{Comp}_1 = \left( \ccE_{\ctT_\cI^{(s)}} + \sqrt{\frac{\log(es) \vee \log \log (ep)}{\log(ep)}}\right)^2,
\]

\[
\mathsf{Comp}_2 = \left( \ccE_{\ctG_\cI^{(s)}}^2 + \sqrt{\frac{\log(es) \vee \log \log (ep)}{\log(ep)}}\right),
\]

 \[
 {\color{black} t_{s,n,p}^{(1)}:= (\psi_*^{-1} - B^{-1})\frac{s\{\log(es) \vee \log \log (ep)\}}{\log(ep)} },
 \]

 \[
 t^{(2)}_{s,n,p}:= \left(\frac{\Tilde{B}^2 M^2 x_0^4 \phi^2 B^2}{\kappa_0 \psi_* \psi_{**}^4} \right)\frac{ s^2 (\log n)^2}{n \log p} + \left(\frac{\Tilde{B} M x_0^3 \phi^{3/2}B^{3/2}}{6}\right)\frac{s^{3/2} (\log n)^{3/2}}{\sqrt{n} \log p},
 \]
 where $\psi_{**} = \psi(x_0 R + x_0 R_0)$.
Now, recall the property \eqref{eq: delta_kl and delta_para comparison} of $\Delta_\kl(\cD)$. Thus, for the aforementioned condition to hold for $\widetilde{\tau}_\glm(s)$, it is sufficient to have 
\begin{align*}
\widetilde{\tau}_*(s):= &\min_{\cD\neq \cS, \abs{\cD}  = s} \frac{\Delta_\kl(\cD)}{ \abs{\cD \setminus \cS} }\\
& \gtrsim \frac{(\phi B)(1 \vee \psi_*)^{-1}}{(\psi_{**} \wedge 1)(1-\eta)^2} \max\left\{\mathsf{Comp}_1, \mathsf{Comp} _2, {\color{black} t_{s,n,p}^{(1)} } , t^{(2)}_{n,s,p}  \right\} \frac{\log (ep)}{n}
\end{align*}
 Under the above inequality and due to \eqref{eq: glm lin proc bound}, \eqref{eq: glm quad proc bound}, and \eqref{eq: glm single quad deviation bound}, we can finally conclude

\begin{align*}
    \pr(\cE^c) & \lesssim \sum_{k=1}^s \binom{s}{k} \{(es) \vee \log(ep)\}^{-2 k} + \frac{s \log p}{n^7}\\
    & \lesssim \frac{1}{(s \vee \log p)} + \frac{s \log p}{n^7}.
\end{align*}

% \paragraph{Final condition should look like this}

% \begin{align*}
% &\min_{\cD\neq \cS, \abs{\cD} = s}\left\{ \frac{\Delta_{\kl}^2(\cD)}{\Gamma_\param(\cD) \abs{\cS \setminus \cD} }, \frac{\Gamma_\kl(\cD)}{\abs{\cS \setminus \cD}}\right\}\\
% & \gtrsim \max\left\{\text{complexity}_1, \text{complexity} _2, {\color{red} (\psi_*^{-1} - B^{-1}) \frac{s\max{\log(es), \log \log (ep)}}{\log(ep)} } , \frac{\Tilde{B}^2 s^2 (\log n)^2}{n \log p} + \frac{\Tilde{B} s^{3/2} (\log n)^{3/2}}{\sqrt{n} \log p}  \right\} \frac{\log p}{n}
% \end{align*}

\section{Quadratic chaos process}
\label{appendix: quadratic chaos}
Let $\cA$ be a set of $m \times n$ matrices and $\bxi$ be a 1-sub-Gaussian random vector. The random variable of interest is 
\[
C_\cA(\bxi) := \sup_{A \in \cA} \abs{\norm{A \bxi}_2^2 - \bbE \norm{A \bxi}_2^2}.
\]
This quantity is studied by \cite{krahmer2014suprema} and \cite{banerjee2019random}. In the literature of empirical process, this is known as order-2 sub-Gaussian chaos. Before we present the main result for $C_\cA(\bxi)$, we introduce some useful definitions.

\begin{definition}
\label{def: talagrand complexity}
For a metric space $(T, d)$, an admissible sequence of $T$ is a collection of subsets of $T$, $\{T_r: r\geq 0\}$, such that for every $r\geq 0$, $\abs{T_r}\leq 2^{2^r}$ and $\abs{T_0} = 1$. For $\alpha\geq 1$, define the $\gamma_\alpha$ functional by 
\[
\gamma_\alpha(T, d):= \inf \sup_{t \in T} \sum_{r=0}^\infty 2^{r/\alpha} d(t, T_r).
\]
\end{definition}
The $\gamma_\alpha$ functional can be bounded in terms of the covering numbers $\cN( T, d, \epsilon)$ by the well-known Dudley's integral (see \cite{talagrand2005generic}). A more specific formulation for the 
$\gamma_2$ functional of a set of matrices $\cA$ endowed with the operator norm, the scenario which we will focus on in this article, is
\[
\gamma_2(\cA, \norm{\cdot}_\op) \leq \int_0^\infty \sqrt{\log \cN( \cA, \norm{\cdot}_\op, \epsilon)}\; d\epsilon.
\]
We also define the two quantities $d_\op(\cA) = \sup_{A\in \cA} \norm{A}_\op$ and $d_F(\cA):= \sup_{A\in \cA} \sqrt{\tr(A^\top A)}$.

Now, we present the main deviation bound for $C_\cA(\bxi)$. 

\begin{theorem}[Theorem 1 of \cite{banerjee2019random}]
\label{thm: order-2 chaos deviation bound}
Let $\cA$ be a set of $m \times n$ matrices and $\bxi := (\xi_1, \ldots, \xi_n)^\top$ be a random vector with independent 1-sub-Gaussian entries. Let 

\begin{align*}
& M = \gamma_2(\cA, \norm{\cdot})_\op \left\{\gamma_2(\cA, \norm{\cdot})_\op + d_F(\cA)\right\},\\
& V = d_\op(\cA) \left\{\gamma_2(\cA, \norm{\cdot})_\op + d_F(\cA)\right\},\\
& U = d_\op(\cA).
\end{align*}
Then, for $t>0$,
\[
\pr \left(C_\cA(\bxi) \geq c_1 M + t\right) \leq 2 \exp \left( -c_2 \min \left\{
\frac{t^2}{V^2}, \frac{t}{U}
\right\}\right),
\]
where $c_1,c_2$ are universal positive constants.
\end{theorem}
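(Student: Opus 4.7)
The statement is the Krahmer–Mendelson–Rauhut (KMR) bound on suprema of order-2 sub-Gaussian chaos processes, restated here via \cite{banerjee2019random}. My plan is to follow the standard three-step route: (i) a pointwise tail bound via Hanson–Wright, (ii) generic chaining to pass to the supremum, and (iii) extraction of the mixed Bernstein-type tail around the expectation.

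For step (i), I would apply the Hanson–Wright inequality to the symmetric quadratic form $\bxi^\top (A^\top A)\bxi$: for any fixed $A \in \cA$,
\[
\pr\bigl(\abs{\norm{A\bxi}_2^2 - \bbE\norm{A\bxi}_2^2} > t\bigr) \;\leq\; 2\exp\!\left(-c\,\min\!\left\{\frac{t^2}{\norm{A}_F^4},\; \frac{t}{\norm{A}_\op^2}\right\}\right).
\]
This already encodes the two-regime structure — sub-Gaussian in the Frobenius scale and sub-exponential in the operator scale — that the final deviation bound must mirror through $V$ and $U$.

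For step (ii), which is the technical heart, I pass to increments
\[
\bxi^\top\bigl(A^\top A - A'^{\top} A'\bigr)\bxi - \bbE[\,\cdot\,],
\]
and exploit the identity $A^\top A - A'^{\top} A' = (A-A')^\top A + A'^{\top}(A-A')$ to obtain $\norm{A^\top A - A'^{\top} A'}_\op \leq 2 d_\op(\cA)\norm{A - A'}_\op$ and $\norm{A^\top A - A'^{\top} A'}_F \leq 2 d_\op(\cA)\norm{A - A'}_F$. A naive chaining would have to invoke two metrics (one per tail regime). The KMR contribution is to show, via a Talagrand-style admissible-sequence construction that simultaneously resolves both metrics, that the expected supremum can be bounded purely in terms of $\gamma_2(\cA, \norm{\cdot}_\op)$ and $d_F(\cA)$:
\[
\bbE\, C_\cA(\bxi) \;\lesssim\; \gamma_2(\cA, \norm{\cdot}_\op)\bigl\{\gamma_2(\cA, \norm{\cdot}_\op) + d_F(\cA)\bigr\} = M.
\]

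Step (iii) then extracts the deviation from the same chain: the increment Bernstein bounds propagate through the partition hierarchy to produce the stated tail with $V = d_\op(\cA)\{\gamma_2(\cA, \norm{\cdot}_\op) + d_F(\cA)\}$ and $U = d_\op(\cA)$, after which folding the expectation bound into the tail is a one-line union bound. The main obstacle is squarely step (ii): Hanson–Wright increments live in a mixed Gaussian/exponential Orlicz space, so producing \emph{one} admissible sequence whose successive radii simultaneously control the Gaussian and exponential pieces of every increment — and thereby collapse what a crude argument would split into $\gamma_2 + \gamma_1$ contributions into a single $\gamma_2(\cA,\norm{\cdot}_\op)$ — requires the delicate partition-scheme machinery at the core of generic chaining theory. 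Everything else in the proof is essentially bookkeeping once this core chaining estimate is in hand.
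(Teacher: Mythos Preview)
The paper does not prove this theorem; it is stated as a cited result from \cite{banerjee2019random} (itself building on \cite{krahmer2014suprema}) and used as a black box in the analyses of Theorems~1 and~\ref{thm: sufficiency of BSS-GLM}. There is therefore no ``paper's own proof'' to compare against.

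Your outline is a faithful high-level sketch of the original Krahmer--Mendelson--Rauhut argument, and you have correctly identified step~(ii) as the place where all the work lies. Two small technical corrections if you intend to flesh this out: in step~(i) the Hanson--Wright denominator is $\norm{A^\top A}_F^2$, which is bounded by $\norm{A}_\op^2\norm{A}_F^2$ rather than $\norm{A}_F^4$ (your version is a valid weakening but loses the structure that makes $V$ come out as stated); and in step~(ii) the crucial increment bound for the Frobenius part is $\norm{A^\top A - A'^\top A'}_F \lesssim d_F(\cA)\,\norm{A-A'}_\op$, i.e.\ the \emph{operator}-norm increment times the Frobenius diameter, which is precisely what allows chaining in a single metric $\norm{\cdot}_\op$ and produces the $d_F(\cA)$ factor in $M$ and $V$ without a separate $\gamma_2(\cA,\norm{\cdot}_F)$ term. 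With those adjustments your plan matches the literature proof.
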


\section{Technical lemmas}

\begin{lemma}[Equation (9) in \cite{gordon1941values}]
\label{lemma: gaussian tail bounds}
    Let $\Phi(\cdot)$ denote the cumulative distribution function of standard Gaussian distribution. Then for all $x\geq 0$, the following inequalities are true:
    \[
    \left(\frac{x}{1+x^2}\right) \frac{e^{-x^2/2}}{\sqrt{2\pi}} \leq 1- \Phi(x) \leq \left(\frac{1}{x}\right)\frac{e^{-x^2/2}}{\sqrt{2\pi}}.
    \]
\end{lemma}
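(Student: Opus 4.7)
The plan is to establish the two inequalities separately; the upper bound will then feed directly into the lower bound via a monotonicity trick. For the upper bound, when $x > 0$ I would use the trivial estimate $t/x \geq 1$ on the range of integration:
\[
1 - \Phi(x) \;=\; \int_x^\infty \frac{e^{-t^2/2}}{\sqrt{2\pi}}\, dt \;\leq\; \int_x^\infty \frac{t}{x}\cdot \frac{e^{-t^2/2}}{\sqrt{2\pi}}\, dt \;=\; \frac{e^{-x^2/2}}{x\sqrt{2\pi}},
\]
using that $t\,e^{-t^2/2}$ has antiderivative $-e^{-t^2/2}$. At $x = 0$ the right-hand side is infinite, so the inequality is vacuous there.

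For the lower bound my plan is to convert the upper bound into a lower bound via a single auxiliary function. Define
\[
h(x) := (1+x^2)(1 - \Phi(x)) \;-\; \frac{x\, e^{-x^2/2}}{\sqrt{2\pi}}.
\]
A direct differentiation, during which the $\pm x^2$ contributions cancel, yields
\[
h'(x) \;=\; 2\Big[\,x(1-\Phi(x)) - \tfrac{1}{\sqrt{2\pi}}\,e^{-x^2/2}\,\Big],
\]
which is nonpositive precisely by the upper bound just established. Therefore $h$ is nonincreasing on $[0,\infty)$.

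To finish, I would evaluate the two boundary values. At the origin, $h(0) = \tfrac{1}{2} > 0$. As $x \to \infty$, the upper bound forces $(1+x^2)(1-\Phi(x)) = O\!\big(x\, e^{-x^2/2}\big) \to 0$, and the subtracted term clearly vanishes as well, so $\lim_{x\to\infty} h(x) = 0$. A nonincreasing function that starts strictly positive and converges to zero must be nonnegative throughout, giving $h(x) \geq 0$ for every $x \geq 0$, which is precisely the stated lower bound. I do not anticipate any real obstacle; the only piece of cleverness is noticing that $h'(x)$ collapses to exactly twice the slack in the upper inequality, so the two bounds become two faces of the same computation.
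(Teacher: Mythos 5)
Your proof is correct and complete. The paper itself gives no argument for this lemma---it is imported verbatim from \cite{gordon1941values}---so there is nothing to compare against; for the record, your derivation is the standard one: the upper bound via the substitution trick $\int_x^\infty e^{-t^2/2}\,dt \le \int_x^\infty (t/x)e^{-t^2/2}\,dt$, and the lower bound via the auxiliary function $h(x) = (1+x^2)(1-\Phi(x)) - x e^{-x^2/2}/\sqrt{2\pi}$, whose derivative $h'(x) = 2\bigl[x(1-\Phi(x)) - e^{-x^2/2}/\sqrt{2\pi}\bigr]$ is nonpositive exactly by the upper bound, with $h(0)=1/2>0$ and $h(x)\to 0$, forcing $h\ge 0$. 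All steps check out, including the boundary cases at $x=0$ (lower bound trivially true, upper bound vacuous) and the monotonicity-plus-limit argument at infinity.
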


\begin{lemma}
    \label{lemma: similar Gaussians are hard to distinguish}
    Let $\bw\sim \sfN(\boldsymbol{0}, \sigma^2\bbI_n)$ and $\bmu\in \bbR^n \setminus \{0\}$ such that $\norm{\bmu}_2 \leq \sigma \delta $. Then 
    \[
    \pr (\norm{\bw + \bmu}_2^2 < \norm{\bw}_2^2) \geq \frac{\delta}{1+ \delta^2} \frac{e^{-\delta^2/2}}{\sqrt{2 \pi}}.
    \]
\end{lemma}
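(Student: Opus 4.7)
The strategy is to reduce the probability of interest to a one-dimensional standard Gaussian tail and then invoke Lemma \ref{lemma: gaussian tail bounds}. First, I would expand
\[
\norm{\bw + \bmu}_2^2 - \norm{\bw}_2^2 = 2 \bw^\top \bmu + \norm{\bmu}_2^2,
\]
so that the event $\{\norm{\bw + \bmu}_2^2 < \norm{\bw}_2^2\}$ coincides with $\{\bw^\top \bmu < -\norm{\bmu}_2^2/2\}$. Because $\bw \sim \sfN(\boldsymbol{0}, \sigma^2 \bbI_n)$ and $\bmu \neq \boldsymbol{0}$, the scalar $Z := \bw^\top \bmu/(\sigma\norm{\bmu}_2)$ is standard normal, and the event becomes $\{Z < -\norm{\bmu}_2/(2\sigma)\}$.

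Next, by the symmetry of the standard normal, the probability equals $1 - \Phi(\norm{\bmu}_2/(2\sigma))$. The hypothesis $\norm{\bmu}_2 \leq \sigma \delta$ gives $\norm{\bmu}_2/(2\sigma) \leq \delta/2 \leq \delta$, so the monotonicity of the tail yields
\[
\pr\!\bigl(\norm{\bw+\bmu}_2^2 < \norm{\bw}_2^2\bigr) \;=\; 1 - \Phi\!\bigl(\norm{\bmu}_2/(2\sigma)\bigr) \;\geq\; 1 - \Phi(\delta).
\]
Applying the lower estimate of Lemma \ref{lemma: gaussian tail bounds} at $x = \delta$ then delivers the claimed bound $\tfrac{\delta}{1+\delta^2}\tfrac{e^{-\delta^2/2}}{\sqrt{2\pi}}$.

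There is no real obstacle here; the argument is essentially a one-liner once the squared norms are expanded and the covariance of $\bw^\top \bmu$ is identified. The only minor design choice is that one could instead keep the sharper inequality $\norm{\bmu}_2/(2\sigma) \leq \delta/2$ and apply the Gaussian tail lemma at $x = \delta/2$, giving a strictly better (but uglier) constant. The looser choice $x = \delta$ is what cleanly reproduces the form of the bound used downstream — in particular, the application in the proof of Theorem 2 plugs in $\delta = 1$ and invokes exactly the constant $e^{-1/2}/(2\sqrt{2\pi})$ that the stated version of the lemma produces.
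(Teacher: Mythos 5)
Your proof is correct and follows essentially the same route as the paper: expand the squared norms, reduce to a one-dimensional Gaussian tail via $\bw^\top\bmu \overset{\mathrm{d}}{=} \sigma\norm{\bmu}_2 Z$, and apply the lower bound of Lemma \ref{lemma: gaussian tail bounds} at $x=\delta$. If anything, your bookkeeping is slightly more careful — the paper's displayed identity drops the factor $2$ in the cross term (writing $\bmu^\top\bw + \norm{\bmu}_2^2 < 0$ rather than $\bmu^\top\bw + \norm{\bmu}_2^2/2 < 0$), which only loosens the bound in the harmless direction, so both arguments land on the same constant.
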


\begin{proof}
    By straightforward algebra, it follows that
    \[
    \sf{p}_0 := \pr (\norm{\bw + \bmu}_2^2 < \norm{\bw}_2^2) = \pr (\bmu^\top \bw + \norm{\bmu}_2^2< 0).
    \]
    Note that $\bmu^\top \bw \overset{\rm d}{=} \norm{\bmu}_2 w$, where $w \sim \sfN(0,\sigma^2)$. Hence, due to Lemma \ref{lemma: gaussian tail bounds} we have 
    \begin{align*}
        \sf{p}_0 & = \pr (w < - \norm{\bmu}_2)\\
        & = \pr (w >  \norm{\bmu}_2)\\
        & \geq \pr (w > \sigma \delta )\\
        & \geq \frac{\delta}{1+ \delta^2} \frac{e^{-\delta^2/2}}{\sqrt{2 \pi}}.
    \end{align*}
    
\end{proof}

\begin{lemma}[Lemma 1 in \cite{laurent2000adaptive}]\label{lemma: chi_tail_bound}
Let $W$ be chi-squared random variable with degrees of freedom $m$. Then, we have the following large-deviation inequalities for all $x>0$
\begin{equation}
\mathbb{P}(W-m>2\sqrt{mx}+2x)\leq \exp(-x),\quad and
\label{eq: LM1}
\end{equation}

\begin{equation}
\mathbb{P}(W-m<-2\sqrt{mx})\leq \exp(-x).
\label{eq: LM2}
\end{equation}

% \\
% In general if $X$ is a chi-squared random variable with degrees of freedom $m$ and non-centrality parameter $\nu$ then we have the following large-deviation inequalities for all $x>0$
% \begin{equation}
% \mathbb{P}[X\geq (m+\nu)+2\sqrt{(m+2\nu)x}+2x]\leq \exp(-x),\quad and
% \label{equation: BR1}
% \end{equation} 

% \begin{equation}
% \mathbb{P}[X\leq (m+\nu)-2\sqrt{(m+2\nu)x}]\leq \exp(-x).
% \label{equation: BR2}
% \end{equation}
\end{lemma}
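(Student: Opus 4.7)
The plan is to apply Chernoff's method with the exact log-moment-generating function (log-MGF) of a centered chi-squared random variable, controlled from above by a Bernstein-type expression. Since $W = \sum_{i=1}^m Z_i^2$ with $Z_i$ i.i.d.\ $\sfN(0,1)$, the MGF factorizes as $\mathbb{E}(e^{\lambda W}) = (1-2\lambda)^{-m/2}$ for $\lambda < 1/2$, so the log-MGF of the centered variable $W-m$ is $\phi(\lambda) = -m\lambda - (m/2)\log(1-2\lambda)$ on that same range.

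For the upper tail \eqref{eq: LM1}, I would first establish the Bernstein-type inequality $\phi(\lambda) \leq \frac{m\lambda^2}{1-2\lambda}$ for $\lambda \in [0,1/2)$. This follows in one line from the Taylor expansion
\[
-(m/2)\log(1-2\lambda) - m\lambda \;=\; (m/2)\sum_{k\geq 2}\frac{(2\lambda)^k}{k}
\]
together with the coarse bound $1/k \leq 1/2$ for $k\geq 2$, reducing the right-hand side to a geometric series. Chernoff's inequality then yields $\mathbb{P}(W - m > t) \leq \exp\!\bigl(-\lambda t + m\lambda^2/(1-2\lambda)\bigr)$ for every $\lambda \in (0,1/2)$. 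With $t = 2\sqrt{mx} + 2x$, the specific choice
\[
\lambda^* \;:=\; \frac{\sqrt{x/m}}{1 + 2\sqrt{x/m}}
\]
makes the Chernoff exponent exactly $-x$; verifying this is a short algebraic exercise once one notes that $1 - 2\lambda^* = (1+2\sqrt{x/m})^{-1}$, which causes the two pieces of the exponent to collapse cleanly to $-x$.

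For the lower tail \eqref{eq: LM2}, I would run the analogous argument with $\lambda<0$. Writing $\mu = -\lambda > 0$, the log-MGF becomes $\phi(-\mu) = m\mu - (m/2)\log(1+2\mu)$, and the elementary inequality $\log(1+u) \geq u - u^2/2$ for $u\geq 0$ yields the sub-Gaussian-style bound $\phi(-\mu) \leq m\mu^2$ with no singularity. Chernoff gives $\mathbb{P}(W - m < -s) \leq \exp(-\mu s + m\mu^2)$; optimizing in $\mu$ at $\mu = s/(2m)$ produces $\exp(-s^2/(4m))$, and specializing to $s = 2\sqrt{mx}$ completes the proof.

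The main technical point is the Bernstein-type upper bound on $\phi(\lambda)$ for the upper tail and the matching choice of $\lambda^*$ that saturates the Chernoff exponent at exactly $-x$; both are short but must be carried out carefully. The pole of $\phi$ at $\lambda = 1/2$ forces $\lambda^*$ to stay bounded away from $1/2$, and this is precisely what produces the additional $+2x$ linear term in the upper tail. By contrast, the lower tail is strictly easier because no such singularity appears, which explains the absence of any corresponding linear term on that side.
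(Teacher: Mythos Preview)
Your argument is correct: the Bernstein-type bound $\phi(\lambda)\le m\lambda^2/(1-2\lambda)$ and the choice $\lambda^*=\sqrt{x/m}/(1+2\sqrt{x/m})$ do make the Chernoff exponent collapse to exactly $-x$, and the lower-tail computation via $\log(1+u)\ge u-u^2/2$ and $\mu=s/(2m)$ is clean and yields $-s^2/(4m)=-x$ at $s=2\sqrt{mx}$.

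Note, however, that the paper does not actually prove this lemma: it simply cites \cite{laurent2000adaptive} and then derives the simplified consequences \eqref{eq: LM1-simple}--\eqref{eq: LM2-simple} from the stated inequalities. Your Chernoff/Cram\'er approach is essentially the original Laurent--Massart argument, so there is nothing to compare against here beyond the citation.
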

If we set $x = mu$ in Equation \eqref{eq: LM1} for $u>0$, then we get
\[
\pr \left(\frac{W}{m} -1 \geq 2 \sqrt{u} + 2 u \right) \leq \exp(-mu).
\]
Note that for $u <1$, we have $2\sqrt{u} + 2u< 4 \sqrt{u}$. Thus, setting $u = \delta^2/16$ for any $\delta <1$, we get
\begin{equation}
\label{eq: LM1-simple}
    \pr \left( \frac{W}{m} - 1 \geq \delta\right) \leq \exp(- m \delta^2/16).
\end{equation}
Similarly, setting $x = mu$ and $u = \delta^2/4$ in Equation \eqref{eq: LM2}, we get
\begin{equation}
    \label{eq: LM2-simple}
    \pr \left( \frac{W}{m} - 1 \leq -\delta\right) \leq \exp(- m \delta^2/4).
\end{equation}

\begin{lemma}
    \label{lemma: sqrt lipschitz}
    Let $\delta \in (0,\infty)$. Then for any $x>0$ the following inequality holds:
    \[
    0 <\sqrt{x + \delta} - \sqrt{(x-\delta) \vee 0} \leq \sqrt{2 \delta}.
    \]
\end{lemma}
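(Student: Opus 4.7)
The plan is to handle the two cases $x \le \delta$ and $x > \delta$ separately, since the truncation $(x-\delta)\vee 0$ behaves differently in each. The lower bound $0 < \sqrt{x+\delta} - \sqrt{(x-\delta)\vee 0}$ is immediate in both cases: if $x \le \delta$ the right-hand subtrahend is $0$ and $\sqrt{x+\delta}>0$, while if $x > \delta$ the strict monotonicity of $\sqrt{\cdot}$ combined with $x+\delta > x-\delta > 0$ gives the inequality. So the real work is the upper bound.

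For the upper bound, first consider $x \in (0, \delta]$. Here $(x-\delta)\vee 0 = 0$, so the expression reduces to $\sqrt{x+\delta}$, and since $x \le \delta$ we get $\sqrt{x+\delta}\le \sqrt{2\delta}$ as required. Next, for $x > \delta$, I would rationalize using the conjugate:
\[
\sqrt{x+\delta} - \sqrt{x-\delta} \;=\; \frac{(x+\delta)-(x-\delta)}{\sqrt{x+\delta}+\sqrt{x-\delta}} \;=\; \frac{2\delta}{\sqrt{x+\delta}+\sqrt{x-\delta}}.
\]
The upper bound $\le \sqrt{2\delta}$ is then equivalent to $\sqrt{x+\delta}+\sqrt{x-\delta}\ge \sqrt{2\delta}$, which follows from $\sqrt{x+\delta}\ge \sqrt{2\delta}$ (because $x > \delta$ implies $x+\delta > 2\delta$) and nonnegativity of $\sqrt{x-\delta}$. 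Combining the two cases gives the claim. There is no significant obstacle here; the only mild subtlety is the case split forced by the truncation, and the conjugate trick is the standard device for bounding a difference of square roots.
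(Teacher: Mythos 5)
Your proof is correct. You use the same case split as the paper ($x\le\delta$ versus $x>\delta$), and your treatment of the first case is identical. The only divergence is in the second case: the paper shows that $f_\delta(x):=\sqrt{x+\delta}-\sqrt{x-\delta}$ is decreasing on $(\delta,\infty)$ by computing $f_\delta'(x)=\tfrac12\bigl(\tfrac{1}{\sqrt{x+\delta}}-\tfrac{1}{\sqrt{x-\delta}}\bigr)<0$ and evaluating at $x=\delta$, whereas you rationalize to get $f_\delta(x)=\frac{2\delta}{\sqrt{x+\delta}+\sqrt{x-\delta}}$ and bound the denominator below by $\sqrt{2\delta}$. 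The two devices prove the same monotone decay; yours is calculus-free and gives the bound directly from the identity, while the paper's makes the monotonicity explicit. Both are complete and equally short, so this is a cosmetic difference rather than a substantive one.
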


\begin{proof}
It is obvious that $f_\delta(x):=\sqrt{x + \delta} - \sqrt{(x-\delta) \vee 0} >0$. Now for the other inequality, we will consider two cases:
\paragraph{Case 1: $x \leq \delta$}
In this case $f_\delta(x) = \sqrt{x+\delta} \leq \sqrt{2 \delta}$.

\paragraph{Case 2: $x>\delta$} In this case we have
\[
f_\delta^\prime(x) = \frac{1}{2}\left(\frac{1}{\sqrt{x+\delta} } - \frac{1}{\sqrt{x-\delta}}\right) <0 \quad \text{for all $x>\delta$.}
\]
Hence $f_\delta(x) \leq f_\delta(\delta) = \sqrt{2 \delta}$.
\end{proof}
%\end{appendices}

% % use bibfile 
%  \bibliographystyle{plain}      % Chicago style, author-year citations
%  \bibliography{bj-ref}   % name your BibTeX data base

\end{document}